\def\eqnarray{\stepcounter{equation}\let\@currentlabel=\theequation
\global\@eqnswtrue
\tabskip\@centering\let\\=\@eqncr
$$\halign to \displaywidth\bgroup\hfil\global\@eqcnt\z@
  $\displaystyle\tabskip\z@{##}$&\global\@eqcnt\@ne
  \hfil$\displaystyle{{}##{}}$\hfil
  &\global\@eqcnt\tw@ $\displaystyle{##}$\hfil
  \tabskip\@centering&\llap{##}\tabskip\z@\cr}
\def\endeqnarray{\@@eqncr\egroup
      \global\advance\c@equation\m@ne$$\global\@ignoretrue}
\def\@yeqncr{\@ifnextchar [{\@xeqncr}{\@xeqncr[5pt]}}
\newtheorem{theorem}{Theorem}[section]
\newtheorem{lemma}[theorem]{Lemma}
\newtheorem{corollary}[theorem]{Corollary}
\newtheorem{proposition}[theorem]{Proposition}
\theoremstyle{definition}
\newtheorem{definition}[theorem]{Definition}
\newtheorem{assu}[theorem]{Assumption}
\newtheorem{rem}[theorem]{Remark}
\theoremstyle{remark}
\numberwithin{equation}{section}
\newcommand{\SX}{\Sigma}
\newcommand{\field}[1]{\mathbb{#1}}
\newcommand{\N}{\field{N}}
\newcommand{\R}{\field{R}}
\newcommand{\C}{\field{C}}
\DeclareMathOperator{\tr}{tr}      
\DeclareMathOperator{\dom}{Dom}    
\DeclareMathOperator{\supp}{supp}  
\DeclareMathOperator{\sgn}{sgn}  
\newcommand{\norm}[1]{\lVert#1\rVert}
\newcommand{\upp}[1]{{#1}^\bullet}  
\newcommand{\low}[1]{{#1}_\bullet}
\newcommand{\RRe}{\mathop{\rm Re}}
\newcommand{\IIm}{\mathop{\rm Im}}
\newcommand{\loc}{{\rm loc}}
\newcommand{\one}{\mathds{1}}
\newcommand{\cb}{{\mathcal{B}}}
\newcommand{\cd}{{\mathcal{D}}}
\newcommand{\gt}{\mathfrak{t}} 
\newcommand{\gE}{\mathfrak{E}}   
\newcommand{\gJ}{\mathfrak{J}}  
\newcommand{\LL}{\mathbb{L}}
\newcounter{teller}
\newenvironment{statements}{\begin{list}%
{\rm  (\roman{teller})\hfill}{\usecounter{teller} \leftmargin=1.1cm
\labelwidth=1.1cm \labelsep=0cm \parsep=0cm}
                           }{\end{list}}
\def\typeout#1{\message{^^J}\message{#1}\message{^^J}}
\newif\ifSRCOK \SRCOKtrue
\def\EJECT{\SRC\eject}
\def\WinEdt#1{\typeout{:#1}}
\gdef\MainFile{\jobname.tex}
\gdef\CurrentInput{\MainFile}
\def\SRC{\ifSRCOK%
  \ifnum\inputlineno>\LASTLINE%
    \ifnum\LASTLINE<0%
      \global\PAGETOP=\inputlineno%
    \fi%
    \global\LASTLINE=\inputlineno%
    \ifnum\INPSP=0%
      \ifnum\inputlineno>\PAGETOP%
        
      \fi%
    \else%
      
    \fi%
  \fi%
\fi}
\def\PUSH#1{%
\SRC%
\ifnum\INPSP=0 \global\let\INPSTACKA=\CurrentInput \else%
\ifnum\INPSP=1 \global\let\INPSTACKB=\CurrentInput \else%
\ifnum\INPSP=2 \global\let\INPSTACKC=\CurrentInput \else%
\ifnum\INPSP=3 \global\let\INPSTACKD=\CurrentInput \else%
\ifnum\INPSP=4 \global\let\INPSTACKE=\CurrentInput \else%
\ifnum\INPSP=5 \global\let\INPSTACKF=\CurrentInput \else%
               \global\let\INPSTACKX=\CurrentInput \fi\fi\fi\fi\fi\fi%
\gdef\CurrentInput{#1}%
\WinEdt{<+ \CurrentInput}%
\global\LASTLINE=0%
\ifSRCOK\fi%
\global\advance\INPSP by 1}
\def\POP{%
\ifnum\INPSP>0 \global\advance\INPSP by -1  \fi%
\ifnum\INPSP=0 \global\let\CurrentInput=\INPSTACKA \else%
\ifnum\INPSP=1 \global\let\CurrentInput=\INPSTACKB \else%
\ifnum\INPSP=2 \global\let\CurrentInput=\INPSTACKC \else%
\ifnum\INPSP=3 \global\let\CurrentInput=\INPSTACKD \else%
\ifnum\INPSP=4 \global\let\CurrentInput=\INPSTACKE \else%
\ifnum\INPSP=5 \global\let\CurrentInput=\INPSTACKF \else%
               \global\let\CurrentInput=\INPSTACKX \fi\fi\fi\fi\fi\fi%
\WinEdt{<-}%
\global\LASTLINE=\inputlineno%
\global\advance\LASTLINE by -1%
\SRC}
\def\INPUT#1{\relax}
\def
\let\originalxxxeverypar\everypar
\newtoks\everypar
\everymath\expandafter{\the\everymath\expandafter\SRC}
\output\expandafter{\expandafter\SRCOKfalse\the\output}
\newif\ifSRCOK \SRCOKtrue
\gdef\MainFile{\jobname.tex}
\gdef\CurrentInput{\MainFile}
\def\EJECT{\SRC\eject}
\def\WinEdt#1{\typeout{:#1}}
\def\SRC{\ifSRCOK%
  \ifnum\inputlineno>\LASTLINE%
    \ifnum\LASTLINE<0%
      \global\PAGETOP=\inputlineno%
    \fi%
    \global\LASTLINE=\inputlineno%
    \ifnum\INPSP=0%
      \ifnum\inputlineno>\PAGETOP%
      \fi%
    \else%
    \fi%
  \fi%
\fi}
\def\PUSH#1{%
\SRC%
\ifnum\INPSP=0 \global\let\INPSTACKA=\CurrentInput \else%
\ifnum\INPSP=1 \global\let\INPSTACKB=\CurrentInput \else%
\ifnum\INPSP=2 \global\let\INPSTACKC=\CurrentInput \else%
\ifnum\INPSP=3 \global\let\INPSTACKD=\CurrentInput \else%
\ifnum\INPSP=4 \global\let\INPSTACKE=\CurrentInput \else%
\ifnum\INPSP=5 \global\let\INPSTACKF=\CurrentInput \else%
               \global\let\INPSTACKX=\CurrentInput \fi\fi\fi\fi\fi\fi%
\gdef\CurrentInput{#1}%
\WinEdt{<+ \CurrentInput}%
\global\LASTLINE=0%
\ifSRCOK\fi%
\global\advance\INPSP by 1}
\def\POP{%
\ifnum\INPSP>0 \global\advance\INPSP by -1  \fi%
\ifnum\INPSP=0 \global\let\CurrentInput=\INPSTACKA \else%
\ifnum\INPSP=1 \global\let\CurrentInput=\INPSTACKB \else%
\ifnum\INPSP=2 \global\let\CurrentInput=\INPSTACKC \else%
\ifnum\INPSP=3 \global\let\CurrentInput=\INPSTACKD \else%
\ifnum\INPSP=4 \global\let\CurrentInput=\INPSTACKE \else%
\ifnum\INPSP=5 \global\let\CurrentInput=\INPSTACKF \else%
               \global\let\CurrentInput=\INPSTACKX \fi\fi\fi\fi\fi\fi%
\WinEdt{<-}%
\global\LASTLINE=\inputlineno%
\global\advance\LASTLINE by -1%
\SRC}
\def\INPUT#1{\relax}
\let\OldINCLUDE=\include
\def\include#1{
\EJECT%
\PUSH{#1.tex}%
\OldINCLUDE{#1}%
\POP}
\def
\let\originalxxxeverypar\everypar
\newtoks\everypar
\everymath\expandafter{\the\everymath\expandafter\SRC}
\let\zzzxxxbibliography=\bibliography
\def\bibliography#1{\PUSH{\jobname.bbl}\zzzxxxbibliography{#1}\POP}
\output\expandafter{\expandafter\SRCOKfalse\the\output}
\begin{document}
\bibliographystyle{tom}

\author{A.F.M. ter Elst}
\address{Department of Mathematics, The University of Auckland, Private Bag 92019,  
Auckland 1142, New Zealand.}
\email{terelst@math.auckland.ac.nz}

\author{M. Meyries}
\address{Department of Mathematics,
Karlsruhe Institute of Technology, 76128 Karlsruhe, Germany.}
\email{martin.meyries@kit.edu}

\author{J. Rehberg}
\address{Weierstrass Institute, Mohrenstr.\ 39, 10117 Berlin, Germany.}
\email{joachim.rehberg@wias-berlin.de}

\title[Parabolic equations with source terms on interfaces]{Parabolic equations 
with dynamical boundary conditions and source terms on interfaces}

\keywords{parabolic equation, quasilinear parabolic problem, mixed boundary condition, dynamical boundary condition, maximal parabolic $L^p$-regularity, nonsmooth geometry, nonsmooth coefficients}
\subjclass[2000]{35K20, 35K59, 35M13, 35R05}
\thanks{Part of this work is supported by the Marsden Fund Council from Government funding, 
administered by the Royal Society of New Zealand, and by the Deutsche Forschungsgemeinschaft (DFG project ME 3848/1-1).}

\date{\today}
\begin{abstract} We consider parabolic equations with mixed boundary conditions 
and domain inhomogeneities supported on a lower dimensional hypersurface, 
enforcing a jump in the conormal derivative.
Only minimal regularity assumptions on the domain and the coefficients are imposed.
It is shown that the corresponding linear operator enjoys maximal parabolic regularity in a suitable 
$L^p$-setting.
The linear results suffice
to treat also the corresponding nondegenerate quasilinear problems.
\end{abstract}
\renewcommand{\MakeUppercase}[1]{{\sc #1}}

\maketitle

\section{Introduction}\label{s-intro}
In this article we are interested in the linear parabolic initial-boundary value problem
\begin{alignat}{3}
\varepsilon \partial_t u-\nabla \cdot \mu \nabla u & =  f_\Omega  & 
\qquad & \text{in }\,J \times (\Omega \setminus \Sigma) , \label{e-parabol}\\
u & =  0  & &\text {on }\,  J \times (\partial \Omega \setminus \Gamma), \label{e-Diri}\\
\varepsilon \partial_t u+\nu \cdot \mu \nabla u + b u & =  f_\Gamma &  & 
\text{on }\, J \times \Gamma, \label{e-robin}\\
\varepsilon \partial_t u+[\nu_\SX \cdot \mu \nabla u]  & =   f_\SX 
  && \text{on }\, J \times \SX, \label{e-xi-eq}\\
u(0) & =  u_0  & &\text{in }\, \Omega \cup \Gamma, \label{e-initial}
\end{alignat}
and in its quasilinear variants.
Here $J= (0,T)$ is a bounded time interval,
 $\Omega \subset \R^d$ is a bounded domain, $\Gamma\subseteq \partial\Omega$ is a part of the
 boundary with outer normal~$\nu$, and  $\SX\subset \Omega$ is e.g.\ a finite union of hypersurfaces,
 equipped with a normal field $\nu_\SX$.
By $[\nu_\SX \cdot \mu \nabla u]$ we denote the
jump of $\nu_\SX \cdot \mu \nabla u$ over $\SX$.
 The case that $\Gamma$ or $\SX$ is an empty set
 is not excluded.
We treat a nonsmooth geometry; e.g., it suffices that $\Gamma$ and $\SX$ satisfy
 certain Lipschitz conditions.
Nothing is supposed on the Dirichlet part 
$\partial\Omega \setminus \Gamma$ of the boundary, and the boundary parts $\Gamma$ and 
$\partial\Omega \setminus \Gamma$ are allowed to meet.

Also on the coefficients we impose only low regularity conditions.
The (possibly nonsymmetric)
 coefficient matrix $\mu$ is  bounded and uniformly elliptic, $\varepsilon$ is  positive, 
bounded and bounded away from zero, and  $b$ only has to live in an $L^p$-space.
The 
(possibly nonautonomous) inhomogeneities $f_\Omega$, $f_\Gamma$, $f_{\SX}$ and the 
initial value $u_0$ are assumed to be given.

Parabolic problems with dynamical boundary conditions are considered by many authors,
 see e.g.\ \cite{AmE}, \cite{Esc2}, \cite{Hin} \cite{AQR}, \cite{BBR} and
\cite{BD}, but there always severe assumptions on the data, as smoothness, are
 imposed (compare also \cite{FGGR} and \cite{VV},
 where the boundary condition on $J \times \Gamma$ is understood as Wentzell's boundary condition). 
It is the aim of this work to show that any smoothness assumption on the domain
and the coefficient function $\mu$ can be avoided.
In particular, the domain $\Omega$ does not need to be a Lipschitz domain. 
Let us briefly comment
on this: a moment's thought shows that, by far, many natural domains fail to be 
Lipschitzian. 
For example, if one removes from a ball one half of its equatorial plane,
then the remainder fails to be Lipschitzian. 
As another example, consider a pair of pincers as in Figure~1. 
It is also not Lipschitzian. 
\begin{figure}[htbp]
\centerline{\includegraphics[scale=0.7]{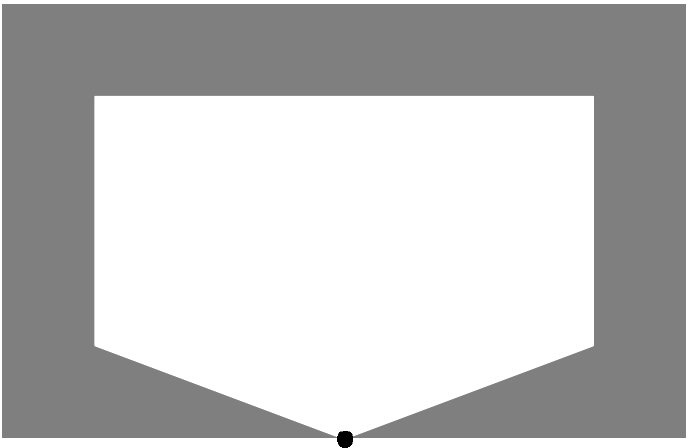}}
\caption{\label{fig-gegenbeispiel} A pair of pincers is \emph{not} a Lipschitz domain.}
\end{figure}
The crucial point is that such objects, obviously, occur in the 
physical world.
In this paper we also allow the
 inhomogeneities not only to
 live in the volume of the domain, but to incorporate a part which is supported on
 the set $\SX$ of lower dimension $d-1$.
This largely extends the applicability of
 the theory to real-world problems.
The reader may think,
 e.g., of a heat source which is concentrated on an interface.
Alternatively, one
 meets such 
constellations in electricity: surface charge densities induce a jump in the
normal component of the dielectric displacement, see e.g.\ \cite[Chapter~1]{Tam}. 

Our approach to (\ref{e-parabol})--(\ref{e-initial}),  which also covers
the case that $\Gamma$ or $\SX$ is empty, is essentially based on the theory of 
sesquilinear forms and the suitable incorporation of the boundary conditions into an $L^p$-space.

We consider the approach in more detail.
 The boundary part $\overline{\Gamma}$ is Lipschitz regular, and  the interface $\SX\subset \Omega$ 
is a $(d-1)$-set in the sense of Jonsson--Wallin \cite{JW} (cf.\ Assumptions \ref{a-region}
 and \ref{a-d-1}).
For the equations we first treat the case $\varepsilon \equiv 1$, and 
consider the sesquilinear form 
\[
{\gt}[u,v] = \int_\Omega \mu \nabla u \cdot \overline {\nabla v} \, d x,
\]
which is defined on the space $W^{1,2}_\Gamma$ of $W^{1,2}(\Omega)$-functions vanishing on 
$\partial\Omega\setminus \Gamma$.
Note that this reflects the Dirichlet conditions.
For all
$u\in W^{1,2}_\Gamma$ we define the trace $\tr u$ on $\Gamma\cup \Sigma$ in a suitable sense 
(based on \cite{JW}), and show that the map $\gJ u = (u, \tr u)$ is continuous  and has dense range 
from $W^{1,2}_\Gamma$ into $\mathbb L^2 :=L^2(\Omega)\oplus L^2(\Gamma \cup \SX;d\mathcal H_{d-1})$ 
(see Lemma \ref{lLpR202}).
Here $\mathcal H_{d-1}$ is the $(d-1)$-dimensional Hausdorff measure.
These
properties of the trace are a consequence of the regularity of $\Gamma$ and $\SX$.
As the form ${\gt}$ satisfies an ellipticity condition with respect to $\gJ$, the results in \cite{AE2} 
imply that ${\gt}$ induces an operator $A_2$ on $\mathbb L^2$.  
For all $\varphi \in W^{1,2}_\Gamma$ 
and $\mathfrak J \varphi \in \dom (A_2)$ its constitutive relation is 
\begin{equation} \label{e-constitu}
\int_{\Omega \cup\Gamma \cup \SX } (A_2 \mathfrak J \varphi) \,  \mathfrak J\overline \psi \, 
(dx+d\mathcal H_{d-1}) =
\int_\Omega \mu \nabla \varphi \cdot \overline {\nabla \psi} \, d x, \qquad \psi\in W_\Gamma^{1,2}(\Omega). 
\end{equation} 
Let us show that $A_2$ describes the spatial derivatives occurring in 
(\ref{e-parabol}), \eqref{e-robin} and \eqref{e-xi-eq}, respectively, in an adequate manner. 
The argument is heuristic
 in general; moreover we identify within these calculations $\varphi$ with 
$\mathfrak J \varphi$, in order to
make the writing more suggestive. Let 
$\Lambda$ be a surface which is piecewise $C^1$ and which decomposes $\Omega$ into two subdomains $\Omega_1$ 
and $\Omega_2$.
(A prototypical situation is when $\Omega$ is a circular cylinder, $\Gamma$
is its upper plate, and $\Sigma$ is the midplane of $\Omega$.) First put $\Sigma =\Lambda \cap \Omega$ and assume that the outer normal 
$\nu_1$ of $\Omega_1$ across $\Lambda$ equals $\nu_\SX$ on $\partial \Omega_1\cap \SX$.
According to \eqref{e-constitu}, for all $\varphi \in \dom(A_2)$ we have
\[
\int_{\Omega \cup\Gamma \cup \SX } (A_2 \varphi) \, \overline \psi \, (dx+d\mathcal H_{d-1})
= \int_{\Omega_1} \mu \nabla \varphi \cdot \overline{\nabla \psi} \, dx
  + \int_{\Omega_2} \mu \nabla \varphi \cdot \overline{\nabla \psi} \, dx
\]
for all $\psi \in W^{1,2}_\Gamma(\Omega)$.
Since $\psi$ vanishes on $\partial \Omega \setminus \Gamma$, 
one can apply Gauss' theorem to obtain
\begin{equation} \label{e-gauss}
\int_{\Omega_1} \mu \nabla \varphi \cdot \overline{\nabla \psi} \, dx
= \int_{\Omega_1} (-\nabla \cdot \mu \nabla \varphi) \, \overline \psi \, dx
  + \int_{\partial \Omega_1 \cap \Gamma} (\nu \cdot \mu \nabla \varphi) \,  
          \overline \psi \, d\mathcal H_{d-1}
+\int_{\Lambda \cap \Omega} (\nu_1 \cdot \mu \nabla \varphi) \, 
     \overline \psi \, d\mathcal H_{d-1}.
\end{equation}
An equation, analogous to
\eqref{e-gauss}, can also be written for $\Omega_2$.
Then the unit normal $\nu_2$ of $\Omega_2$ across $\Lambda$ equals $-\nu_1$ and one deduces
\begin{eqnarray}
\label{e-compare}
\int_{\Omega \cup\Gamma \cup \SX } (A_2 \varphi) \, \overline \psi \, (dx+d\mathcal H_{d-1})
= \int_{\Omega } (-\nabla \cdot \mu \nabla \varphi) \, \overline \psi \, dx 
   & + & \int_ {\Gamma} (\nu \cdot \mu \nabla \varphi) \,  \overline \psi \, d\mathcal H_{d-1}  \\
& + & \int_{\Lambda \cap \Omega}  [\nu_\SX \cdot  \mu \nabla \varphi] \, \overline \psi \, d\mathcal H_{d-1},
    \nonumber
\end{eqnarray}
where $[\nu_\SX \cdot \mu \nabla \varphi]=\nu_\SX \cdot \big( \mu  \nabla \varphi |_{\partial\Omega_1 \cap \Sigma}
    -  \mu \nabla \varphi |_{\partial\Omega_2 \cap \Sigma}\big)$ 
is the jump in the conormal derivative.
Thus, varying $\psi$ suitably and comparing both sides of \eqref{e-compare},
 one recognizes that $A_2$ has in fact three `components', namely
\begin{enumerate}
\item
the divergence of the vector field 
$\mu \nabla \varphi$ on $\Omega\setminus \SX$, taking $L^2(\Omega) $-functions as values;
\item
the conormal derivative on $\Gamma$, taking $L^2(\Gamma;d\mathcal H_{d-1})$-functions as values;
\item
the jump in the conormal derivative on $\SX$, taking
$L^2(\SX;d\mathcal H_{d-1})$-functions as values.
\end{enumerate}
If one takes $\Sigma$ as a proper subset of $\Lambda \cap \Omega$ (which admits the $(d-1)$-property),
 then \eqref{e-compare} leads to the equation 
\begin{equation*}
\int_{ \SX} (A_2 \varphi) \,  \overline \psi \, d\mathcal H_{d-1}
= \int_{\Lambda \cap \Omega}  [\nu_\SX \cdot  \mu \nabla \varphi] \, \overline \psi \, d\mathcal H_{d-1},
\end{equation*}
which enforces $[\nu_\SX \cdot \mu \nabla \varphi]$ to vanish on 
$(\Lambda \cap \Omega) \setminus \overline{\SX}$. 
Hence the dynamic equations on $\Gamma$ and 
$\SX$ are modelled by the part $L^2(\Gamma \cup \SX; d\mathcal H_{d-1})$ of the base space $\mathbb L^2$.
The subsequent analysis will show that, in either the elliptic or in the parabolic setting,
these three components may be prescribed, and the equation indeed has a solution in the functional
analytic setting which we will establish. 
Moreover, the solution depends continuously on the data.

 The operator $-A_2$ generates a holomorphic, submarkovian $C_0$-semigroup of contractions on $\mathbb L^2$, and may thus be extended to a 
semigroup of contractions on $\mathbb L^p$ for all $p\in [1,\infty]$. 
Denoting the corresponding
generators by $-A_p$, it turns out that for all $p\in (1,\infty)$ the operator $-\varepsilon^{-1} A_p$ 
generates  a holomorphic $C_0$-semigroup of contractions on a suitably renormed $\LL^p$-space.
This has two important consequences.
First, applying an abstract result that is presented e.g.\ in 
\cite[Proposition 2.2]{LeMX}, we obtain a bounded holomorphic functional calculus 
for $\varepsilon^{-1} A_p$
 with angle strictly smaller than $\frac{\pi}{2}$, and in particular the boundedness of
the purely imaginary powers (see Theorem \ref{t-imagin}).
Moreover, the pioneering theorem of Lamberton \cite{Lamb} gives us maximal parabolic regularity 
for $\varepsilon^{-1}A_p$ in Theorem \ref{t-qreg}, which we consider as the main result of this work.
The introduction of temporal weights as in \cite{PrSi} further allows to reduce the regularity of
the initial data almost up to the base space $\mathbb L^p$.
This yields the solution of (\ref{e-parabol})--(\ref{e-initial}) in an
 adequate manner, see Theorem~\ref{t-solution}.

Based on these linear results we treat a nondegenerate quasilinear variant of 
(\ref{e-parabol})--(\ref{e-initial}), even if the right hand side explicitly and 
discontinuously depends on time (Theorem \ref{t-semilinear}).
Here a difficulty is that
 the domain of the realization of the operator $-\nabla \cdot \mu \nabla$ on 
$\mathbb L^p$ is not independent of the coefficients $\mu$.
We therefore consider a problem which is obtained
when applying the Kirchhoff transform to the original one, and which involves only one fixed 
operator (see Definition \ref{quasi-solution}).
 Maximal parabolic regularity then allows to apply a result of Pr\"uss \cite{pru2} 
(see also \cite{CL}) to the transformed problem, giving local existence and uniqueness of solutions 
in a suitable sense. 
Throughout it is essential that 
$\dom (A_p^\theta ) \subset \mathbb L^\infty$ for 
large $p$ and $\theta$ sufficiently close to $1$, which is a consequence of 
ultracontractivity estimates for the semigroup (see Lemma \ref{lLpR206} and 
Proposition \ref{pLpR301}).
The quasilinear problems may be of relevance for
the applications: the heat source on the hypersurface can depend on the solution itself,
 and, additionally, explicitly on time. 

Let us briefly compare the approach in this paper 
with those in \cite{Grie2},  \cite{HaR} and \cite{HaR3} for static Robin boundary conditions.
There the Banach space under consideration is a negatively indexed Sobolev space of type 
$H^{-\theta,q}$ or a Sobolev--Morrey space. In contrast to that settings, in $\mathbb L^p$ one
 may form  the dual pairing of the above parabolic equation with the indicator function $\chi_\Lambda$
of suitable subsets $\Lambda \subset \Omega$. Then one may, additionally, apply Gauss' theorem
to 
$\langle -\nabla \cdot \mu \nabla u,\chi_\Lambda \rangle 
=\int _{\Lambda} -\nabla \cdot \mu \nabla u\,
d x +\int_{\Lambda \cap \Sigma}-\nabla \cdot \mu \nabla u\, d\mathcal H_{d-1}$.
This allows to recover the underlying  physical balance law for the parabolic equation,
which is the starting point for the numerical treatment of such problems. 
For more details we refer to Remark \ref{rem-heuristics-2}.

\medskip

This paper is organized as follows.
In Section \ref{Sec-2} we introduce the spaces 
$\mathbb L^p$, define an appropriate realization of $- \nabla\cdot \mu \nabla$ and show
 that it admits a bounded holomorphic functional calculus.
 In Section \ref{s-parabolic}
 we show that in this setting (\ref{e-parabol})--(\ref{e-initial}) enjoys maximal parabolic
 regularity, and in Section \ref{s-semilinear} we treat the quasilinear case. 
We finish with some concluding remarks in Section \ref{s-conclud}.

\section{Elliptic operators on $\mathbb L^p$}\label{Sec-2}

\subsection{Notation}
\label{s-nota}
Throughout this paper $\mathcal L(X;Y)$ denotes the space of bounded
linear operators from $X$ to $Y$, where $X$ and $Y$ are Banach spaces.
If $X = Y$, then we abbreviate $\mathcal L(X)$.
Note that if $X$ and $Y$ are two Banach
 spaces spaces such that $X \subset Y$ as vector spaces, and both $X$ and $Y$ are
 continuously embedded in a Hausdorff locally convex space, then the inclusion map 
from $X$ into $Y$ is continuous by the closed graph theorem.

In the sequel let $\Omega$ be a bounded domain in $\R^d$ with $d >1$ and 
$\Gamma$ an open part of its boundary $\partial\Omega$, which may be empty.
If $p \in [1,\infty)$, then
$L^p(\Omega)$ is the space of complex-valued, Lebesgue measurable, $p$-integrable
functions on $\Omega$, and for all $\theta \in [0,1]$
we denote by $W^{\theta,p}(\Omega)$  the usual Sobolev--Slobodetskii spaces, 
see \cite{Gris} or \cite{Maz}.
Moreover, $L^\infty(\Omega)$ is the space of Lebesgue measurable,
essentially bounded functions on $\Omega$.
The $(d-1)$-dimensional Hausdorff measure on 
$\R^d$ is denoted by $\mathcal H_{d-1}$.
We denote by $B( x, r)$ the ball in $\R^d$ centred at $ x$ with radius $r$.

\subsection{The function spaces} \label{s-function-spaces} In this subsection we consider the function spaces on which \eqref{e-parabol}--\eqref{e-initial} will be posed.

\begin{definition} \label{d-w01p}
For all $q \in [1,\infty]$ we define $W_\Gamma^{1,q}$ as the closure in 
$W^{1,q}(\Omega)$ of the set
\[
    C_\Gamma^{\infty}(\Omega) 
    \stackrel{\mathrm{def}}{=} 
    \Big\{ u|_{\Omega} : 
    u \in C_c^{\infty}({\R}^d), \, \supp(u) \cap (\partial \Omega \setminus \Gamma)
    = \emptyset 
    \Big\}.
\]
\end{definition}
 
Throughout this paper we make the following assumption on $\Gamma$.

\begin{assu} \label{a-region}
For all $ x \in \overline \Gamma$ there is an open neighbourhood
$\mathcal V_ x$ of $x$ and a bi-Lipschitz mapping 
$F_ x$ from $\mathcal V_ x$ onto the open unit cube $E$ in $\mathbb R^d$, such that 
$F_ x(x) = 0$ and 
$F_ x(\Omega \cap \mathcal V_ x)$ is equal to
the lower open half cube $E_- = (-1,1)^{d-1} \times (-1,0)$ of $E$.
\end{assu}

The reader should notice that the domain $\Omega$ does not need to be Lipschitzian. 
Moreover, nothing is supposed on the boundary of $\Gamma$ within $\partial \Omega$. 

An important technical tool is an extension operator for the $W^{1,q}_\Gamma$-spaces.

\begin{proposition} \label{p-extension}
There is an extension operator $\gE \colon L^1(\Omega) \to L^1(\R^d)$
such that the restriction $\gE|_{W^{1,q}_\Gamma}$ maps $W^{1,q}_\Gamma$ continuously 
into $W^{1,q}(\R^d)$ for all $q \in [1,\infty]$, 
the restriction $\gE|_{L^q(\Omega)}$ maps $L^q(\Omega)$ continuously into 
$L^q(\R^d)$ for all $q \in [1,\infty]$ and 
$\supp \gE u \subset B(0,2R)$ for all $u \in L^1(\Omega)$, where
$R = \sup \{ |x| : x \in \Omega \} $.
\end{proposition}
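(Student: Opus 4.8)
The plan is to construct $\gE$ locally near $\overline{\Gamma}$ by reflection in bi-Lipschitz coordinates, and to use a naive extension (restriction of a fixed global extension, or extension by zero composed with a Stein-type operator) away from $\overline{\Gamma}$, gluing everything with a partition of unity. More precisely: by Assumption~\ref{a-region}, for each $x \in \overline{\Gamma}$ there is a neighbourhood $\mathcal V_x$ and a bi-Lipschitz chart $F_x \colon \mathcal V_x \to E$ flattening $\Omega \cap \mathcal V_x$ onto the lower half cube $E_-$. The set $\overline{\Gamma}$ is compact (it is a closed subset of the compact set $\partial\Omega$), so finitely many such neighbourhoods $\mathcal V_{x_1},\dots,\mathcal V_{x_n}$ cover $\overline{\Gamma}$. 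Choose an open set $\mathcal V_0$ with $\overline{\mathcal V_0} \subset \Omega \cup \Gamma \cup (\R^d \setminus \overline{\Omega})$ — informally, $\mathcal V_0$ stays away from the Dirichlet part $\partial\Omega \setminus \Gamma$ — such that $\{\mathcal V_0, \mathcal V_{x_1}, \dots, \mathcal V_{x_n}\}$ covers $\overline{\Omega}$, and take a smooth partition of unity $\{\eta_0, \eta_1, \dots, \eta_n\}$ subordinate to this cover with $\sum_j \eta_j = 1$ on a neighbourhood of $\overline{\Omega}$.

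On each chart neighbourhood $\mathcal V_{x_j}$ one transports $\eta_j u$ to the half cube via $F_{x_j}$, extends by even reflection across the hyperplane $\{y_d = 0\}$ to all of $E$, and transports back by $F_{x_j}^{-1}$; call the result $\gE_j(\eta_j u)$, extended by zero outside $\mathcal V_{x_j}$. The key point is that even reflection $Ru(y', y_d) = u(y', -|y_d|)$ maps $W^{1,q}$ of the half cube boundedly into $W^{1,q}$ of the full cube for every $q \in [1,\infty]$ (the weak gradient of the reflected function has no singular part on $\{y_d=0\}$ precisely because the reflection is even, i.e.\ continuous across the interface), and composition with bi-Lipschitz maps preserves $W^{1,q}$ and $L^q$ with norm bounds depending only on the bi-Lipschitz constants and on $q$ — uniformly in $q$ by the standard change-of-variables estimates. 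Since $\supp(\eta_j u) \cap (\partial\Omega \setminus \Gamma) = \emptyset$, the reflected-and-transported function is a genuine $W^{1,q}(\R^d)$ function whenever $u \in W^{1,q}_\Gamma$; for this one approximates $u$ by elements of $C_\Gamma^\infty(\Omega)$, for which the computation is classical, and passes to the limit. For the term $\eta_0 u$, which is supported in $\mathcal V_0$ away from $\partial\Omega \setminus \Gamma$, one may either argue it already extends by zero (if $\overline{\mathcal V_0 \cap \Omega}$ avoids the bad boundary one still needs a local extension, so more honestly) apply a local bi-Lipschitz flattening of $\Gamma$ exactly as above, or simply absorb $\mathcal V_0$ into the family of chart neighbourhoods by noting that points of $\Gamma$ not among the $x_j$ are still covered. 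Finally set
\[
    \gE u = \chi \sum_{j} \gE_j(\eta_j u),
\]
where $\chi \in C_c^\infty(\R^d)$ equals $1$ on $\overline{\Omega}$ and is supported in $B(0,2R)$; the cutoff $\chi$ enforces the support condition $\supp \gE u \subset B(0,2R)$ while preserving the $W^{1,q}$ and $L^q$ bounds, and $\gE u|_\Omega = u$ because $\sum_j \eta_j = 1$ there and each $\gE_j$ is an extension. The map is visibly linear and defined on all of $L^1(\Omega)$ since the reflection construction makes sense for merely integrable functions (the $L^q$ boundedness for all $q$ follows from the same change-of-variables estimates without any Sobolev regularity).

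The main obstacle is the uniformity of the operator norm across all $q \in [1,\infty]$ simultaneously, and the honest verification that even reflection in bi-Lipschitz coordinates lands in $W^{1,q}(\R^d)$ rather than merely $BV$. The uniformity is handled because the reflection operator $R$ has the explicit bound $\|Ru\|_{W^{1,q}(E)} \le 2^{1/q}\|u\|_{W^{1,q}(E_-)} \le 2\|u\|_{W^{1,q}(E_-)}$ with a constant independent of $q$, and bi-Lipschitz pullback contributes a factor controlled by (a power of) the Lipschitz constants of $F_{x_j}$ and $F_{x_j}^{-1}$, again uniformly in $q$; multiplication by the fixed smooth functions $\eta_j$ and $\chi$ costs only $\|\eta_j\|_{C^1}$, $\|\chi\|_{C^1}$. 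The regularity issue — that no distributional mass concentrates on the reflection hyperplane — is exactly where evenness (as opposed to, say, odd reflection, which would be appropriate for $W^{1,q}_0$) is used; this is standard for Lipschitz charts, and the only care needed is that $F_{x_j}$ being merely bi-Lipschitz means one works with the a.e.-defined Jacobian and invokes the chain rule for Sobolev functions composed with bi-Lipschitz maps. All remaining points — linearity, that $\gE$ restricts correctly on $W^{1,q}_\Gamma$ versus on $L^q(\Omega)$, and the support statement — are then routine.
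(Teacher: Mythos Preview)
Your overall strategy---cover $\overline\Gamma$ by bi-Lipschitz charts, reflect evenly in local coordinates, glue with a partition of unity, multiply by a compactly supported cutoff---is exactly what the reference \cite{ERe1} invoked in the paper does, so the approach is the same. But your treatment of the piece $\eta_0$ is backwards, and as written the cover does not exist. You ask that $\overline{\mathcal V_0}$ avoid the Dirichlet part $\partial\Omega\setminus\Gamma$, yet the chart neighbourhoods $\mathcal V_{x_j}$ only cover $\overline\Gamma$; hence any point of $\partial\Omega\setminus\Gamma$ lying outside $\overline\Gamma$ is covered by neither, and $\{\mathcal V_0,\mathcal V_{x_1},\dots,\mathcal V_{x_n}\}$ cannot cover $\overline\Omega$. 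Your fallback suggestions---a Stein-type operator, or ``absorb $\mathcal V_0$ into the family of chart neighbourhoods''---are unavailable precisely because \emph{nothing} is assumed about $\partial\Omega\setminus\Gamma$: there are no charts there.

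The correct move is to reverse the roles. Take $\eta_0 = 1 - \sum_{j\ge 1}\eta_j$ on all of $\R^d$, so that $\eta_0$ vanishes on a neighbourhood of $\overline\Gamma$ and equals $1$ near $\partial\Omega\setminus\Gamma$, and define the $j=0$ contribution to be the \emph{extension by zero} of $\eta_0 u$. For $u\in C_\Gamma^\infty(\Omega)$ one then has $\supp(\eta_0 u)\cap\partial\Omega=\emptyset$: the support misses $\overline\Gamma$ because $\eta_0$ does, and it misses $\partial\Omega\setminus\Gamma$ because $u$ does by the very definition of $C_\Gamma^\infty(\Omega)$. Hence $(\eta_0 u)^0\in C_c^\infty(\R^d)$ with $\|(\eta_0 u)^0\|_{W^{1,q}(\R^d)}=\|\eta_0 u\|_{W^{1,q}(\Omega)}\le C\|u\|_{W^{1,q}(\Omega)}$, and the bound passes to all of $W^{1,q}_\Gamma$ by density. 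This is the one place where the definition of $W^{1,q}_\Gamma$ as a closure is actually used, and it is the only mechanism available for crossing the possibly very irregular Dirichlet boundary. With this correction the rest of your argument (reflection in the charts, uniformity in $q$, the cutoff $\chi$) goes through as you describe.
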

\begin{proof}
The proof is given in \cite[Lemma~3.4]{ERe1} for the case $q=2$, but carries over to 
all $q \in [1,\infty]$.
Moreover, the second assertion is also easily checked.
The last statement follows by multiplication with a suitable 
$C_c^\infty(\R^d)$-function.
\end{proof}

It turns out that a classical condition from geometric measure theory is 
tailor made in order to define a geometric assumption on a
$(d-1)$-dimensional shape $\SX$ in $\Omega$.

\begin{assu} \label{a-d-1}
Let $\SX\subset \Omega$ be a $(d-1)$-set in the sense of Jonsson--Wallin 
\cite[Subsection~VII.1.1]{JW}.
Precisely: the set $\SX$ is Borel measurable and there exist $c_1,c_2 > 0$ such that
\begin{equation} \label{e-measure00}
 c_1 r^{d-1} \le \mathcal H_{d-1} \bigl (B( x, r) \cap \SX \bigr ) 
\le c_2 r^{d-1} 
\end{equation}
for all $x \in \SX$ and $r \in (0,1)$.
\end{assu}

\begin{rem} \label{r-hypersurf}
We emphasize that  $\SX$ does not have to be closed. Nevertheless 
has $\Sigma$ \emph{finite} $(d-1)$-dimensional Hausdorff measure, according to 
\eqref{e-measure00}. The prototype of $\SX$ is the finite union $\bigcup_j\SX_j$ 
of Lipschitzian hypersurfaces. In that case the restriction of the Hausdorff measure
$\mathcal H_{d-1}$ to $\Gamma$ or to $\SX_j$ can be constructed explicitly in terms
of the local bi-Lipschitz charts (compare \cite[Section~3.3.4~C]{EvG}). In particular,
if $\SX$ is a finite union of Lipschitz graphs, then \eqref{e-measure00} is easily 
verified using this representation of $\mathcal H_{d-1}$.
Moreover, Assumption~\ref{a-d-1} implies for general $\Sigma$
 that  $\SX$ is of ($d$-dimensional) Lebesgue measure $0$.
\end{rem}

Throughout this paper we always presume Assumptions~\ref{a-region} and \ref{a-d-1}.
\begin{definition} We denote by $\rho$ the restriction of the 
Hausdorff measure $\mathcal H_{d-1}$ to $\Gamma \cup \SX$.
\end{definition}

If $u \in L^1_\loc(\R^d)$ and $F \subset \R^d$ is a set, then define 
the function $\tr_F u$ as in \cite[Page~15]{JW} by 
\[
(\tr_F u)(x)
= \lim_{r \to 0} \frac{1}{|B(x,r)|} \, \int_{B(x,r)} u(y)\, d y,
\]
for all $x \in F$ for which the limit exists.
The domain
$\dom(\tr_F u)$ of $\tr_F u$ is the set of all $x \in F$ for which this limit exists.

\begin{lemma} \label{l-measure}
Let $q,r \in [1,\infty)$ 
and $\theta \in [0,1]$.
Let $\gE$ be the extension operator as in Proposition~{\rm \ref{p-extension}}.
\begin{statements}
\item \label{l-measure-1}
If $\frac {1}{q} - \frac {1-\theta}{d} \le \frac {1}{r}$, then
$\mathfrak E$ maps $W^{1,q}_\Gamma$ continuously
 into $W^{\theta,r}(\R^d)$.
\item \label{l-measure-1.5}
If $\frac {1}{q} - \frac {1-\theta}{d} < \frac {1}{r}$, then
$\mathfrak E$ maps $W^{1,q}_\Gamma$ compactly into $W^{\theta,r}(\R^d)$.
\item \label{l-measure-2}
If $\theta \in (\frac {1}{q},1]$, then the trace map 
$u \mapsto \tr_{\Gamma \cup \SX} u$ is continuous from $W^{\theta,q}(\R^d)$ 
into $L^q(\Gamma \cup \SX; d\rho)$.
\end{statements}
\end{lemma}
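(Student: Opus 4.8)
The plan is to prove the three assertions in turn, with (i) and (ii) coming from standard Sobolev embedding theorems combined with the boundedness of the extension operator $\gE$ from Proposition~\ref{p-extension}, and (iii) being the substantive point, where the $(d-1)$-set hypothesis on $\Gamma\cup\SX$ (Assumptions~\ref{a-region} and \ref{a-d-1}) enters through a trace theorem of Jonsson--Wallin type.

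For (i): by Proposition~\ref{p-extension} the operator $\gE$ maps $W^{1,q}_\Gamma$ continuously into $W^{1,q}(\R^d)$, and it has image supported in a fixed ball $B(0,2R)$. It therefore suffices to recall the continuous Sobolev embedding $W^{1,q}(\R^d)\hookrightarrow W^{\theta,r}(\R^d)$ whenever $\theta\le 1$ and $\tfrac1q-\tfrac{1-\theta}{d}\le\tfrac1r$; on a bounded set (or after localization to $B(0,2R)$) this is the classical Sobolev--Gagliardo--Nirenberg inequality together with the obvious monotonicity $W^{\theta',r}\hookrightarrow W^{\theta,r}$ for $\theta\le\theta'$. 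Composing the two bounded maps gives the claim. For (ii): the same composition applies, but now I would invoke the Rellich--Kondrachov compactness theorem on the bounded set $B(0,2R)$: the embedding $W^{1,q}(B(0,2R))\hookrightarrow W^{\theta,r}(B(0,2R))$ is compact precisely when the Sobolev exponent inequality is strict, i.e.\ $\tfrac1q-\tfrac{1-\theta}{d}<\tfrac1r$. Since $\gE$ has range in functions supported in $B(0,2R)$, compactness of this embedding transfers to compactness of $\gE\colon W^{1,q}_\Gamma\to W^{\theta,r}(\R^d)$.

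For (iii), the key structural input is that $\Gamma\cup\SX$ is a $(d-1)$-set in the sense of Jonsson--Wallin: $\SX$ is one by Assumption~\ref{a-d-1}, $\overline\Gamma$ is a Lipschitz hypersurface hence also satisfies the two-sided estimate \eqref{e-measure00} (Remark~\ref{r-hypersurf}), and a finite union of $(d-1)$-sets is again a $(d-1)$-set, with $\rho=\mathcal H_{d-1}|_{\Gamma\cup\SX}$ the associated $(d-1)$-dimensional measure. The restriction/trace theorem of Jonsson--Wallin (\cite[Chapter~VII]{JW}) then asserts exactly that for a closed $(d-1)$-set $F$ carrying such a measure, the pointwise trace operator $u\mapsto\tr_F u$ (defined via Lebesgue points as in the passage above) maps the Besov space $B^{\theta}_{q,q}(\R^d)$ boundedly onto its trace space on $F$ whenever $\theta-\tfrac d q>-\tfrac1q$, i.e.\ $\theta>\tfrac1q$; in particular it maps $B^\theta_{q,q}(\R^d)$ continuously into $L^q(F;d\rho)$. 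Since $W^{\theta,q}(\R^d)=B^\theta_{q,q}(\R^d)$ for the stated range of $\theta$ and $1\le q<\infty$ (with equivalent norms), this gives the asserted continuity. A mild technical wrinkle is that $\Gamma\cup\SX$ need not be closed, while the Jonsson--Wallin theorem is stated for closed sets; I would handle this by passing to the closure $\overline{\Gamma\cup\SX}$, noting that by \eqref{e-measure00} and lower semicontinuity it is again a $(d-1)$-set, that $\mathcal H_{d-1}(\overline{\Gamma\cup\SX}\setminus(\Gamma\cup\SX))=0$ so the measure $\rho$ is unchanged up to a null set, and that the Lebesgue-point definition of $\tr$ is insensitive to this.

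The main obstacle is the precise invocation of the trace theorem in part (iii): one must be careful that the Jonsson--Wallin trace is the \emph{same} map as the Lebesgue-point trace $\tr_{\Gamma\cup\SX}$ used in the rest of the paper (they coincide $\rho$-a.e., which is part of the Jonsson--Wallin construction), that the union of the two $(d-1)$-sets $\overline\Gamma$ and $\SX$ is handled correctly when they overlap or touch, and that the Besov-space identification $W^{\theta,q}=B^\theta_{q,q}$ is legitimate (it holds since $\theta$ is not an integer in the relevant range, $\theta\in(\tfrac1q,1]$, with the endpoint $\theta=1$ excluded when $q\ne 2$ — if $\theta=1$ is to be included one instead uses $W^{1,q}\hookrightarrow B^{\theta'}_{q,q}$ for any $\theta'<1$ with $\theta'>\tfrac1q$ and reduces to that case). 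Modulo these bookkeeping points, (iii) is a direct citation; parts (i) and (ii) are entirely routine.
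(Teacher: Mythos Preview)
Parts (i) and (ii) are fine and agree with the paper's brief justification.

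For part (iii), however, your argument has a genuine gap. You assert that $\overline{\Gamma}$ is a Lipschitz hypersurface and therefore a $(d-1)$-set. Under Assumption~\ref{a-region} this is not true in general: the assumption guarantees only that for each $x\in\overline{\Gamma}$ the \emph{full} boundary piece $\partial\Omega\cap\mathcal V_x$ is a bi-Lipschitz image of a flat plate, but it says nothing about how $\Gamma$ itself sits inside $\partial\Omega$. The paper explicitly remarks that ``nothing is supposed on the boundary of $\Gamma$ within $\partial\Omega$.'' In particular, at a point $x$ on the relative boundary of $\Gamma$ in $\partial\Omega$, the set $\Gamma$ may be extremely thin: for instance (after flattening) one can have $\Gamma$ correspond to a union of intervals $\bigcup_{n}(2^{-n}-4^{-n},2^{-n})$, so that $0\in\overline\Gamma$ but $\mathcal H_{d-1}(B(0,r)\cap\overline\Gamma)\approx r^2\ll r$. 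Then the lower bound in \eqref{e-measure00} fails and $\overline\Gamma$ is not a $(d-1)$-set; the Jonsson--Wallin trace theorem cannot be applied to it directly, and your ``finite union of $(d-1)$-sets'' step collapses.

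The paper circumvents this by treating $\Gamma$ and $\SX$ separately. For $\SX$ it argues essentially as you do (passing to $\overline\SX$, which is a $(d-1)$-set with $\mathcal H_{d-1}(\overline\SX\setminus\SX)=0$). For $\Gamma$ it does not attempt to show that $\overline\Gamma$ is a $(d-1)$-set; instead it uses the charts from Assumption~\ref{a-region} to cover $\overline\Gamma$ by finitely many patches $\overline{\mathcal W_{x_j}}=F_{x_j}^{-1}\big([-\tfrac12,\tfrac12]^{d-1}\times\{0\}\big)$, each of which \emph{is} a bi-Lipschitz image of a closed cube and hence a $(d-1)$-set. The finite union $\bigcup_j\overline{\mathcal W_{x_j}}\supset\overline\Gamma$ is then a $(d-1)$-set, the Jonsson--Wallin theorem yields a bounded trace into $L^q(\bigcup_j\overline{\mathcal W_{x_j}};d\mathcal H_{d-1})$, and one finishes by restricting to $\Gamma$. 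Your argument can be repaired along exactly these lines, but the enlargement step is essential and cannot be skipped.
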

\begin{proof}
`\ref{l-measure-1}' and `\ref{l-measure-1.5}'. 
This follows from Proposition~\ref{p-extension}, the support property of $\mathfrak E$ and the usual Sobolev embedding.

`\ref{l-measure-2}'. 
Since $\Gamma$ and $\SX$ are disjoint, the natural 
map from the space $L^q(\Gamma \cup \SX; d\rho)$
into $L^q(\SX ; d\mathcal H_{d-1})\times  L^q( \Gamma; d\mathcal H_{d-1})$
is a linear, topological isomorphism.
Therefore, it suffices to show that
the trace maps 
$u \mapsto \tr_\Gamma u$ and $u \mapsto \tr_\SX u$
are continuous from $W^{\theta,q}(\R^d)$ 
into  $L^q(\Gamma; d\mathcal H_{d-1})$ and $L^q(\SX; d\mathcal H_{d-1})$.

It follows from 
\cite[Chapter~VIII, Proposition~1]{JW} that property \eqref{e-measure00} inherits
to the closure $\overline \SX$ of $ \SX$.
Then the trace operator $u \mapsto \tr_{\overline \SX} u$
is bounded from $W^{\theta,q}(\R^d)$ into $L^q(\overline { \SX}; d\mathcal H_{d-1})$
by  \cite[Chapter~V, Theorem~1]{JW}. 
But the set difference 
$\overline { \SX}\setminus  \SX$ is of $\mathcal H_{d-1}$
measure $0$ (see again \cite[Chapter~VIII, Proposition~1]{JW}).
Consequently the spaces  
$L^q(\overline { \SX}; d\mathcal H_{d-1})$ and 
$L^q( { \SX}; d\mathcal H_{d-1})$ are identical. 

Next we consider the set $\Gamma$.
Using the notation as in Assumption \ref{a-region}, 
for every  $x \in \overline \Gamma$ the map
$F_x$ provides a bi-Lipschitz parametrization of $\partial \Omega \cap \mathcal V_x$, where 
the parameters run through the upper plate 
$P:= (-1,1)^{d-1} \times \{0\}$ of the half cube $E_-$.
Moreover, the Hausdorff measure
$\mathcal H_{d-1}$ on $\partial \Omega \cap \mathcal V_x$ is the 
surface measure, and the latter is obtained from the Lebesgue measure on $(-1,1)^{d-1}
 \times \{0\}$ via the bi-Lipschitzian parametrization, see \cite[Section~3.3.4~C]{EvG}.
Define $\mathcal W_x = F_x\big ( (-\frac {1}{2},\frac {1}{2})^{d-1} \times \{0\}\big)$.
Then  $\mathcal W_x \subset \partial \Omega$.
There exist $n \in \N$ and $x_1,\ldots,x_n \in \overline \Gamma$ such that 
$\mathcal W_{x_1},\ldots,\mathcal W_{x_n}$ is a finite cover of $\overline \Gamma$.
Obviously, 
$\overline {\mathcal W_{x_1}},\ldots,\overline {\mathcal W_{x_n}} $ 
is also a finite cover of $\overline \Gamma$.
Moreover, it is not hard to see that $\bigcup_{j=1}^n \overline {\mathcal W_{x_j}}$ 
is a $(d-1)$-set in the sense of Jonsson--Wallin (compare \cite[Lemma~3.2]{HaR2}).
Hence by \cite[Chapter~V, Theorem~1]{JW} there exists
a continuous trace operator from $W^{\theta,q}(\R^d)$ into 
$L^q(\cup_{j=1}^n \overline {\mathcal W_{x_j}}; d\mathcal H_{d-1})$.
Combining
this operator with the restriction operator to $\Gamma$, one obtains the desired 
trace operator into $L^q(\Gamma ; d\mathcal H_{d-1})$.
\end{proof}

For all $u \in L^1_\loc(\Omega)$ define the function $\tr u$ as in \cite[Section~VIII.1.1]{JW} by 
\[
\dom(\tr u) 
= \Big\{ x \in \Gamma \cup \SX : 
    \lim_{r \to 0} \frac{1}{|B(x,r) \cap \Omega|} \, \int_{B(x,r) \cap \Omega} u(y)\, dy \;\;
   \mbox{ exists}
  \Big\}
\]
and 
\[
(\tr u)(x) 
= \lim_{r \to 0} \frac{1}{|B(x,r) \cap \Omega|} \, \int_{B(x,r) \cap \Omega} u(y)\,dy
\]
for all $x \in \dom(\tr u)$.

The above defined trace enjoys the following mapping properties.

\begin{proposition} \label{pLpR201}
Let $q,r \in (1,\infty)$ and suppose that 
$\frac{d-q}{q} < \frac{d-1}{r}$.
Then $\tr u \in L^r(\Gamma \cup \SX; d\rho)$ for all $u \in W^{1,q}_\Gamma$,
and the map 
$u \mapsto \tr u$ is compact from $W^{1,q}_\Gamma$ into $L^r(\Gamma \cup \SX; d\rho)$.
\end{proposition}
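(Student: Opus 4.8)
The plan is to factor the trace map through the extension operator $\gE$ of Proposition~\ref{p-extension} and an intermediate Sobolev--Slobodetskii space on $\R^d$, and then to apply Lemma~\ref{l-measure}. Concretely, I would choose an auxiliary exponent $\theta \in (0,1)$ so that simultaneously $\frac1q - \frac{1-\theta}{d} < \frac1r$ (so that $\gE \colon W^{1,q}_\Gamma \to W^{\theta,r}(\R^d)$ is \emph{compact} by Lemma~\ref{l-measure}\,\ref{l-measure-1.5}) and $\theta > \frac1r$ (so that the trace $\tr_{\Gamma\cup\SX} \colon W^{\theta,r}(\R^d) \to L^r(\Gamma\cup\SX;d\rho)$ is bounded by Lemma~\ref{l-measure}\,\ref{l-measure-2}). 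The hypothesis $\frac{d-q}{q} < \frac{d-1}{r}$ is exactly what makes such a $\theta$ exist: rewriting, it says $\frac1q - \frac1d < \frac1r - \frac1{dr}$, i.e.\ $\frac1q - \frac{1}{d}\bigl(1 - \tfrac1r\bigr) < \frac1r$; one then takes $\theta$ slightly larger than $\max\{\tfrac1r,\ 1 - d(\tfrac1r - \tfrac1q + \tfrac1d)\}$ — a short arithmetic check shows this maximum is strictly less than $1$ under the stated inequality, and also that both required strict inequalities hold for such $\theta$. (The borderline case $q \ge d$ is easier, as then $\theta$ close to $1$ works for any $r$.)

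Once the exponent is fixed, the argument is: $\tr u = \bigl(\tr_{\Gamma\cup\SX}(\gE u)\bigr)\big|_{\Gamma\cup\SX}$ for $u \in W^{1,q}_\Gamma$, which requires checking that the Jonsson--Wallin trace of the extension agrees $\rho$-a.e.\ with the one-sided trace $\tr u$ defined via averages over $B(x,r)\cap\Omega$. This compatibility is where one must be a little careful, but it follows from the density of $C^\infty_\Gamma(\Omega)$ in $W^{1,q}_\Gamma$ together with the boundedness of both maps: for smooth $u$ the two averages clearly have the same limit at Lebesgue points of $\gE u$ lying in $\Gamma\cup\SX$ (and $\mathcal H_{d-1}$-a.e.\ point of $\Gamma\cup\SX$ is such a point, since the extension is genuinely an extension of $u$ near $\overline\Omega$ away from $\partial\Omega\setminus\Gamma$), and the identity passes to the limit in the relevant $L^r$-norm. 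Thus $\tr = R \circ \tr_{\Gamma\cup\SX} \circ \gE$ where $R$ is the (bounded) restriction from $L^r(\Gamma\cup\SX;d\rho)$ to itself (trivial here) — the point is that $\tr$ is the composition of a compact operator $\gE$ with a bounded operator $\tr_{\Gamma\cup\SX}$, hence compact, and in particular $\tr u \in L^r(\Gamma\cup\SX;d\rho)$.

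The main obstacle I anticipate is precisely the bookkeeping in the first paragraph: verifying that the open condition $\frac{d-q}{q} < \frac{d-1}{r}$ guarantees the existence of an admissible $\theta$, separating the cases $q<d$ and $q\ge d$, and making sure the two inequalities needed from Lemma~\ref{l-measure} are the correct strict/non-strict versions (compactness needs the strict embedding inequality, which is available here because we have strict inequality to play with). The identification of the two notions of trace is routine but must be stated; everything else — continuity of $\gE$, the Sobolev embedding, the Jonsson--Wallin trace theorem — is quoted directly from Proposition~\ref{p-extension} and Lemma~\ref{l-measure}. No new estimates are needed.
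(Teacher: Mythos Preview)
Your overall strategy --- factor $\tr$ as $\tr_{\Gamma\cup\SX}\circ\gE$ through an intermediate $W^{\theta,r}(\R^d)$ and apply Lemma~\ref{l-measure}\ref{l-measure-1.5} and~\ref{l-measure-2} --- is exactly the paper's approach, and your existence argument for the auxiliary $\theta$ is correct in spirit (though note the compactness condition $\frac1q-\frac{1-\theta}{d}<\frac1r$ gives an \emph{upper} bound $\theta<1+d(\frac1r-\frac1q)$, not a lower bound; the interval $(\frac1r,\,1+d(\frac1r-\frac1q))\cap(0,1)$ is nonempty precisely under the hypothesis).

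The gap is in the trace identification on $\Gamma$. Your density argument assumes ``the boundedness of both maps'', but the $W^{1,q}_\Gamma\to L^r$ boundedness of the one-sided trace $\tr$ is exactly what the proposition asserts --- you cannot invoke it to pass the identity $\tr u_n = \tr_{\Gamma\cup\SX}(\gE u_n)$ to the limit. Even for smooth $u$ the claim that ``the two averages clearly have the same limit'' at $x\in\Gamma$ is not automatic: the full-ball average involves $\gE u$ on $B(x,r)\setminus\Omega$, where $\gE u$ need not be continuous and need not agree with any smooth extension of $u$. The paper instead proves the pointwise identity $(\tr u)(x)=(\tr_{\Gamma\cup\SX}\gE u)(x)$ for $\mathcal H_{d-1}$-a.e.\ $x\in\Gamma$ directly for arbitrary $u\in W^{1,q}_\Gamma$, by localizing the argument of \cite[Chapter~VIII, Proposition~2]{JW}: since $\gE u\in W^{1,q}(\R^d)$, for $\mathcal H_{d-1}$-a.e.\ $x$ there is a thin exceptional set $E$ with $\mathcal H_{d-1}(E\cap B(x,r))=o(r^{d-1})$ off which $\gE u$ has limit $(\tr_{\Gamma\cup\SX}\gE u)(x)$, and this is what forces the one-sided average over $B(x,r)\cap\Omega$ to converge to the same value. (On $\SX$ the identity is indeed trivial, as you say.) You should replace the density step by this pointwise argument.
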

\begin{proof}
Let $\gE$ be the extension operator as in Proposition~{\rm \ref{p-extension}}.
Then it follows from Lemma~\ref{l-measure} that $u \mapsto \tr_{\Gamma \cup \SX} \gE u$ 
maps $W^{1,q}_\Gamma$ compactly into $L^r(\Gamma \cup \SX; d\rho)$.
But if $u \in W^{1,q}_\Gamma$, then we claim that
\begin{equation}\label{trace-identity}
(\tr u)(x) = (\tr_{\Gamma \cup \SX} \gE u)(x)
\end{equation}
for $\mathcal H_{d-1}$-a.e.\ $x \in \Gamma \cup \SX$.
Obviously, this identity holds for 
$\mathcal H_{d-1}$-a.e.\ $x\in \SX$ since $\SX \subset \Omega$.
For $\mathcal H_{d-1}$-a.e.\ $x\in \Gamma$ we 
can argue as in the proof of \cite[Chapter~VIII, Proposition~2]{JW}, where the case 
$\Gamma = \partial\Omega$ is considered. Indeed, the arguments given there are purely local.
Since $\gE u \in W^{1,q}(\R^d)$ it follows that for $\mathcal H_{d-1}$-a.e.\ $x\in \Gamma$
there exists a Borel set $E \subset \R^d$ such that 
$\mathcal H_{d-1}(E\cap B(x,r)) = o(r^{d-1})$ and 
$(\gE u)(x) = \displaystyle \lim_{y\to x, \; y\notin E} (\gE u)(y)$.
 Using these properties of $E$, the same arguments as in the last part of the proof given 
in \cite{JW} establish \eqref{trace-identity}.
\end{proof}

The space on which (\ref{e-parabol})--(\ref{e-initial}) will be realized is given as follows.

\begin{definition} \label{d-lp}
For all $p \in [1,\infty]$, denote by 
$\mathbb L^p$ the Lebesgue space $L^p(\Omega \cup \Gamma; d x+d\rho)$.
We denote the space of all real valued functions in $\mathbb L^p$ by $\mathbb L^p_\R$.
\end{definition}

Observe that there is a natural topological isomorphism
between $\mathbb L^p$ and the direct sum $L^p(\Omega) \oplus L^p(\Gamma\cup \SX;d\rho)$
and we will identify $\mathbb L^p$ with $L^p(\Omega) \oplus L^p(\Gamma\cup \SX;d\rho)$
through this natural map.

By Proposition~\ref{pLpR201} we can define the map $\gJ \colon W^{1,2}_\Gamma \to \LL^2$
by 
\[
\gJ u = (u, \tr u) \in L^2(\Omega) \oplus L^2(\Gamma\cup \SX;d\rho) \cong \LL^2
 . \]
Note that one can always choose some $p > 2$ in Statement~\ref{lLpR202-2} of the next lemma.

\begin{lemma} \label{lLpR202}
\mbox{}
\begin{statements}
\item \label{lLpR202-1}
The map $\gJ$ is continuous and has dense range.
\item \label{lLpR202-2}
If $p \in [1,\infty)$ and $(d-2) p < 2 (d-1)$, then 
$\gJ W^{1,2}_\Gamma \subset \LL^p$.
\item \label{lLpR202-3}
The map $\gJ$ is compact.
\end{statements}
\end{lemma}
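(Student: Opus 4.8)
The plan is to deduce all three statements from the trace results already assembled, together with the density of $C^\infty_\Gamma(\Omega)$ in $W^{1,2}_\Gamma$ and a standard approximation argument for the range. For statement \ref{lLpR202-1}, continuity of $\gJ$ is immediate: the first component $u \mapsto u$ is the identity $W^{1,2}_\Gamma \hookrightarrow L^2(\Omega)$, and the second component $u \mapsto \tr u$ is continuous into $L^2(\Gamma \cup \SX; d\rho)$ by Proposition~\ref{pLpR201} applied with $q=r=2$ (the condition $\frac{d-2}{2} < \frac{d-1}{2}$ holds for every $d>1$). For the dense range, I would argue that $\gJ(C^\infty_\Gamma(\Omega))$ is already dense in $\LL^2 \cong L^2(\Omega) \oplus L^2(\Gamma\cup\SX;d\rho)$. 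The key point is that for $u \in C^\infty_\Gamma(\Omega)$ the trace $\tr u$ agrees with the genuine pointwise restriction $u|_{\Gamma\cup\SX}$ (by \eqref{trace-identity} and the continuity of $u$). So one must show that $\{(u|_\Omega,\, u|_{\Gamma\cup\SX}) : u \in C^\infty_c(\R^d),\ \supp u \cap (\partial\Omega\setminus\Gamma)=\emptyset\}$ is dense in $L^2(\Omega)\oplus L^2(\Gamma\cup\SX;d\rho)$. Since $\Gamma$ and $\SX$ are disjoint and $\SX \subset \Omega$, while $\Gamma$ is an \emph{open} subset of $\partial\Omega$ at positive distance from $\partial\Omega\setminus\Gamma$ locally, one can prescribe the three pieces (values in the bulk, on $\Gamma$, on $\SX$) essentially independently by choosing smooth functions supported near each piece; a partition-of-unity/mollification argument then yields any prescribed $L^2$-triple in the limit.

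For statement \ref{lLpR202-2}, I would simply re-run Proposition~\ref{pLpR201} with $q=2$ and $r=p$: the hypothesis there reads $\frac{d-2}{2} < \frac{d-1}{p}$, which is exactly the stated condition $(d-2)p < 2(d-1)$ (and is trivially satisfied when $d=2$, and for $p$ slightly above $2$ when $d>2$, explaining the remark preceding the lemma). This gives $\tr u \in L^p(\Gamma\cup\SX;d\rho)$, and since $W^{1,2}_\Gamma \subset L^2(\Omega) \subset$ nothing larger in general—but here we only need $u \in L^p(\Omega)$ for the \emph{first} component. Hmm: one should check $W^{1,2}_\Gamma \subset L^p(\Omega)$ as well. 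This follows from the Sobolev embedding via the extension operator $\gE$ of Proposition~\ref{p-extension}: $W^{1,2}(\R^d) \hookrightarrow L^{p}(\R^d)$ for $p \le \frac{2d}{d-2}$ (all $p<\infty$ if $d=2$), and $\frac{2d}{d-2} \ge p$ is implied by $(d-2)p \le 2d$, which is weaker than $(d-2)p < 2(d-1) < 2d$. Hence both components land in $L^p$, so $\gJ W^{1,2}_\Gamma \subset \LL^p$.

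For statement \ref{lLpR202-3}, compactness of $\gJ$: the second component $u \mapsto \tr u$ is compact from $W^{1,2}_\Gamma$ into $L^2(\Gamma\cup\SX;d\rho)$ directly by Proposition~\ref{pLpR201} (the compactness is part of that statement, with $q=r=2$). The first component $u \mapsto u$ from $W^{1,2}_\Gamma$ into $L^2(\Omega)$ is compact by Rellich--Kondrachov: via the extension operator $\gE$ into $W^{1,2}(\R^d)$, which has support in a fixed ball $B(0,2R)$ by Proposition~\ref{p-extension}, the embedding into $L^2$ of that bounded region is compact (this is also the $\theta<1$ case of Lemma~\ref{l-measure}\ref{l-measure-1.5} with $r=q=2$). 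A direct sum of two compact operators is compact, so $\gJ$ is compact. The only genuinely non-mechanical step is the density claim in \ref{lLpR202-1}; I expect the main obstacle there to be verifying cleanly that the bulk values on $\Omega$, the boundary values on $\Gamma$, and the interface values on $\SX$ can be approximated independently—this rests on $\SX$ having $d$-dimensional Lebesgue measure zero (Remark~\ref{r-hypersurf}), so that bulk $L^2(\Omega)$-data is insensitive to values near $\SX$, and on the local separation of $\Gamma$ from the Dirichlet part $\partial\Omega\setminus\Gamma$ built into Definition~\ref{d-w01p}.
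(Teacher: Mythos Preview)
Your arguments for continuity in \ref{lLpR202-1} and for \ref{lLpR202-2} and \ref{lLpR202-3} match the paper's proof essentially line for line: Proposition~\ref{pLpR201} with $q=r=2$ for continuity and compactness of the trace component, Lemma~\ref{l-measure}\ref{l-measure-1.5} (equivalently Rellich via the extension operator) for compactness of the bulk component, and the extension operator plus Sobolev embedding for the bulk $L^p$-inclusion in \ref{lLpR202-2}.

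The one genuine difference is your treatment of dense range in \ref{lLpR202-1}. You propose a direct constructive approximation: given $(g,h)\in L^2(\Omega)\oplus L^2(\Gamma\cup\SX;d\rho)$, build a smooth $u\in C^\infty_\Gamma(\Omega)$ whose bulk and trace pieces approximate $g$ and $h$ separately, using that $\SX$ is Lebesgue-null and $\Gamma$ is open in $\partial\Omega$. This can be made to work (first match $h$ by some $v\in C^\infty_\Gamma(\Omega)$ using density of restrictions, then correct the bulk by $w\in C_c^\infty(\Omega\setminus\overline\SX)$ without disturbing the trace), but it is exactly the step you flag as ``non-mechanical''. The paper instead uses a short duality argument: take $f=(f_\Omega,f_\partial)$ orthogonal to $\gJ(W^{1,2}_\Gamma)$, test first with $u\in C_c^\infty(\Omega\setminus\overline\SX)$ to kill $f_\Omega$ (using $L^2(\Omega\setminus\overline\SX)=L^2(\Omega)$), then with $u\in C^\infty_\Gamma(\Omega)$ to kill $f_\partial$ (using that $\{u|_{\Gamma\cup\SX}:u\in C^\infty_\Gamma(\Omega)\}$ is dense in $L^2(\Gamma\cup\SX;d\rho)$). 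This relies on the same two density facts your construction needs, but packages them more economically and sidesteps the explicit partition-of-unity bookkeeping. One small caution: your phrase ``at positive distance from $\partial\Omega\setminus\Gamma$ locally'' should be read as ``compact subsets of $\Gamma$ are at positive distance from $\partial\Omega\setminus\Gamma$''; the paper explicitly allows $\overline\Gamma$ to meet the Dirichlet part.
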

\begin{proof}
`\ref{lLpR202-1}'.
The continuity follows from Proposition~\ref{pLpR201}.
Let $f = (f_\Omega,f_\partial) \in L^2(\Omega) \oplus L^2(\Gamma\cup \SX;d\rho)$
and suppose that 
$(\gJ u,f)_{L^2(\Omega) \oplus L^2(\Gamma\cup \SX;d\rho)} = 0$ for all $u \in W^{1,2}_\Gamma$.
We show that $f=0$.
For all $u \in C_c^\infty(\Omega \setminus \overline \SX)$ one has 
$0 = (\gJ u,f) = \int_\Omega u \, \overline{f_\Omega}\, dx$.
Since $C_c^\infty(\Omega \setminus \overline \SX)$ is dense in 
$L^2(\Omega \setminus \overline \SX) = L^2(\Omega)$ one deduces that 
$f_\Omega = 0$.
Therefore 
$0 = \int_{\Gamma\cup \SX} \tr u \, \overline{f_\partial} \, d\rho$
for all $u \in W^{1,2}_\Gamma$ and in particular for all 
$u \in C^\infty_\Gamma(\Omega)$.
But $ \{ u|_{\Gamma\cup \SX} : u \in C^\infty_\Gamma(\Omega) \} $ is 
dense in $L^2(\Gamma\cup \SX; d\rho)$.
So $f_\partial = 0$.

`\ref{lLpR202-2}'.
If $\gE$ is the extension operator as in Proposition~{\rm \ref{p-extension}}
then it follows from Lemma~\ref{l-measure}\ref{l-measure-1} that
$\gE$ maps $W^{1,2}_\Gamma$ continuously into $L^p(\R^d)$
for all $p \in [1,\infty)$ with $(d-2) p \leq 2d$.
So $W^{1,2}_\Gamma \subset L^p(\Omega)$.
Now the statement follows from Proposition~\ref{pLpR201}.

`\ref{lLpR202-3}'.
It follows immediately from Lemma~\ref{l-measure}\ref{l-measure-1.5}
that the restriction $\gE|_\Omega$ maps $W^{1,2}_\Gamma$
compactly into $L^2(\Omega)$.
So the embedding of $W^{1,2}_\Gamma$ into $L^2(\Omega)$ is compact.
Also the map $\tr$ is compact from $W^{1,2}_\Gamma$ into 
$L^2(\Gamma\cup \SX;d\rho)$ by Proposition~\ref{pLpR201}.
Therefore the map $\gJ$ is compact.
\end{proof}

We end this subsection with a truncation lemma.

\begin{lemma} \label{lLpR204} 
Let $u \in W^{1,2}_\Gamma$ be real-valued.
Then $u \wedge \one_\Omega \in W^{1,2}_\Gamma$ and
$\gJ(u \wedge \one_\Omega) = (\gJ u) \wedge \one_{\Omega \cup \Gamma}$.
\end{lemma}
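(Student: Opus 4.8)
The statement has two parts: first, that the truncation $u \wedge \one_\Omega$ still lies in $W^{1,2}_\Gamma$, and second, that the trace map $\gJ$ intertwines the truncation on $\Omega$ with the truncation on $\Omega \cup \Gamma$. For the first part, the plan is to recall that $W^{1,2}(\Omega)$ is stable under the lattice operation $u \mapsto u \wedge \one_\Omega = \min\{u,1\}$, with $\nabla (u\wedge\one_\Omega) = (\nabla u)\,\one_{\{u<1\}}$, and that this operation is continuous on $W^{1,2}(\Omega)$ (Stampacchia). So it suffices to check that membership in the \emph{closed subspace} $W^{1,2}_\Gamma$ is preserved. I would do this by approximation: take $u_n \in C_\Gamma^\infty(\Omega)$ with $u_n \to u$ in $W^{1,2}(\Omega)$; then $u_n \wedge \one_\Omega \to u \wedge \one_\Omega$ in $W^{1,2}(\Omega)$ by continuity of truncation, so it is enough to show $u_n \wedge \one_\Omega \in W^{1,2}_\Gamma$. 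Each $u_n$ is the restriction of a $C_c^\infty(\R^d)$-function whose support misses $\partial\Omega\setminus\Gamma$; truncating at level $1$ keeps the support condition but only gives a Lipschitz (not smooth) function, so one more mollification/approximation step is needed — one can write $u_n \wedge \one_\Omega$ as a $W^{1,2}$-limit of functions in $C_\Gamma^\infty(\Omega)$, e.g.\ by approximating the Lipschitz truncation $t\mapsto t\wedge 1$ by smooth nondecreasing functions $\phi_k$ with $\phi_k' $ bounded and $\phi_k \to (\cdot\wedge 1)$, noting $\phi_k\circ u_n \in C_\Gamma^\infty(\Omega)$ and $\phi_k\circ u_n \to u_n\wedge\one_\Omega$ in $W^{1,2}(\Omega)$. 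This gives $u\wedge\one_\Omega \in W^{1,2}_\Gamma$.

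\textbf{The trace identity.} For the second part, the natural approach is again approximation combined with the continuity of $\gJ$ from $W^{1,2}_\Gamma$ into $\LL^2$ (Lemma~\ref{lLpR202}\ref{lLpR202-1}). On smooth functions the identity $\gJ(v \wedge \one_\Omega) = (\gJ v)\wedge \one_{\Omega\cup\Gamma}$ is clear: for $v \in C^\infty_\Gamma(\Omega)$ the pointwise trace on $\Gamma\cup\SX$ is just the restriction of the continuous function, and $\min$ commutes with restriction, while on the $\Omega$-component $\gJ$ is the identity, so both sides visibly agree. The issue is passing to the limit: if $u_n \to u$ in $W^{1,2}_\Gamma$, then $u_n \wedge \one_\Omega \to u\wedge\one_\Omega$ in $W^{1,2}_\Gamma$ (part one plus continuity of truncation), hence $\gJ(u_n\wedge\one_\Omega) \to \gJ(u\wedge\one_\Omega)$ in $\LL^2$; on the other hand $\gJ u_n \to \gJ u$ in $\LL^2$, and since $f\mapsto f\wedge\one_{\Omega\cup\Gamma}$ is a contraction (nonexpansive) on $\LL^2$ we get $(\gJ u_n)\wedge\one_{\Omega\cup\Gamma} \to (\gJ u)\wedge\one_{\Omega\cup\Gamma}$ in $\LL^2$. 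Equality of the two limits then follows from equality along the sequence, which holds by the smooth case applied to $u_n$ (here using that $C^\infty_\Gamma(\Omega)$ is dense in $W^{1,2}_\Gamma$ and, once more, that $\phi_k\circ u_n$ approximates $u_n\wedge\one_\Omega$ within $C^\infty_\Gamma(\Omega)$ with controlled traces).

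\textbf{Main obstacle.} The only genuinely delicate point is that the trace $\tr$ is defined by a Lebesgue-point/averaging limit rather than as a restriction, so I cannot directly say "truncation commutes with taking traces" pointwise for general $u\in W^{1,2}_\Gamma$. The clean way around this is to never argue pointwise for non-smooth $u$ at all, but to push everything through the continuity of $\gJ$ and the contractivity of the lattice operations on $\LL^2$, reducing to the smooth case where $\tr$ genuinely is restriction to $\Gamma\cup\SX$. A secondary technical nuisance is the two-step smooth approximation ($C^\infty_\Gamma \to W^{1,2}_\Gamma$, then smoothing the Lipschitz truncation inside $C^\infty_\Gamma$); this is routine but must be done to keep the argument honest, since $u_n\wedge\one_\Omega$ is only Lipschitz, not smooth. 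With these approximation steps in place the identity follows.
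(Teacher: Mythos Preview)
Your proof is correct and follows essentially the same route as the paper: the paper cites \cite[Theorem~3.1]{ERe1} for the first assertion (your two-step approximation via smooth $\phi_k$ with $\phi_k(0)=0$ is a standard way to carry this out directly), and for the second it verifies the identity on $C_\Gamma^\infty(\Omega)$ and then extends by density, using that both $u\mapsto \gJ(u\wedge\one_\Omega)$ and $u\mapsto (\gJ u)\wedge\one_{\Omega\cup\Gamma}$ are continuous from the real part of $W^{1,2}_\Gamma$ into $\LL^2$. Your explicit identification of why a pointwise argument for the trace would fail, and the reduction to continuity, matches the paper's reasoning exactly.
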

\begin{proof}
The first statement is shown in the proof of \cite[Theorem~3.1]{ERe1}.
The second statement is obvious for real-valued $u\in C_\Gamma^\infty(\Omega)$. 
Since the maps $u\mapsto \gJ(u \wedge \one_\Omega)$ and 
$u\mapsto (\gJ u) \wedge \one_{\Omega \cup \Gamma}$ are continuous on the real version of $W^{1,2}_\Gamma$,
the identity carries over to the general case by density.
\end{proof}

\subsection{The operator on $\LL^p$} \label{SLpS2.2}

In this subsection we introduce a differential operator on $\LL^p$ that corresponds to the spatial derivatives in \eqref{e-parabol}, \eqref{e-robin} and \eqref{e-xi-eq}.

Throughout the remaining of this paper we adopt the next assumption.

\begin{assu} \label{a-coeff}
  Let $\mu=\bigl \{\mu_{k,l}\bigr \}_{k,l} \colon \Omega \to
\mathcal L(\R^d;\R^d)$ be a measurable map from $\Omega$ into the set of real
 $d\times d$ matrices.
We assume that there are $\low{\mu}, \upp{\mu} > 0$ such that
\[
 \norm{{\mu}( x)}_{\mathcal L(\R^d;\R^d)}
      \le
      \upp{\mu}\quad \text{and}    
\quad \sum_{k,l=1}^d \mu_{k,l}( x)\, \xi_k\, \xi_l\ge 
      \low{\mu} \sum_{k=1}^d \xi_k^2
\]
   for all $ x \in \Omega $ and $\xi=(\xi_1,\ldots,\xi_d)\in\R^d$.
\end{assu}

We emphasize that $\mu$ does not have to be symmetric.

\begin{definition}  \label{d-form}                                
 Define the sesquilinear form ${\gt} \colon W^{1,2}_\Gamma \times W^{1,2}_\Gamma \to \C$
by 
\[
      {\gt}[u,v]  
=      \int_\Omega 
      \mu \nabla u \cdot \overline {\nabla v} \, d x
 .
\]
\end{definition}

We emphasize that the domain of the form $\gt$ is the space $W^{1,2}_\Gamma$,
 which appropriately incorporates the Dirichlet condition on 
$\partial \Omega \setminus \Gamma$, compare \cite[Section~1.2]{Cia} or \cite[Section~II.2]{GGZ}.
The form $\gt$ is continuous and 
\begin{equation}\label{J-ellipticity}
\RRe \gt[u,u] + \|\gJ u\|_{\LL^2}^2 
\geq (\low{\mu} \wedge 1) \|u\|_{W^{1,2}_\Gamma}^2
\end{equation}
for all $u \in W^{1,2}_\Gamma$.
Therefore by Lemma  \ref{lLpR202}\ref{lLpR202-1} and \cite[Theorem~2.1]{AE2} there exists a 
unique operator $A_2$ in $\LL^2$ such that for all $\varphi,\psi \in \LL^2$ one has 
$\varphi \in \dom(A_2)$ and $A_2 \varphi = \psi$ if and only if there exists a 
$u \in W^{1,2}_\Gamma$ such that $\gJ u = \varphi$ and 
\begin{equation} \label{e-opdef}
\gt[u,v]
= (\psi, \gJ v)_{\LL^2}
\end{equation}
for all $v \in W^{1,2}_\Gamma$.
Although the form domain of $\gt$ is $W^{1,2}_\Gamma$, the operator $A_2$ 
is an operator in $\LL^2$. 
We refer to the introduction for a discussion of 
the relation of $A_2$ to the original problem  (\ref{e-parabol})--(\ref{e-initial}).

\begin{rem} \label{r-identopJ}
The construction of $A_2$ generalizes the derivation of an operator from a suitable form
$\mathfrak s$ to the case when the form domain $D_\mathfrak s$ is a priori \emph{not} contained in 
the corresponding Hilbert space $\mathfrak H$ (compare \cite[Section~VI.2]{Kat1} for the classical case).
The substitute for the inclusion $D_\mathfrak s \subset \mathfrak H$ is the definition of 
an appropriate embedding operator $\gJ \colon D_\mathfrak s \to \mathfrak H$.
Fortunately, all tools for form methods are still available.
\end{rem}
\begin{proposition} \label{pLpR203}
The operator $A_2$ is $m$-sectorial with vertex $0$ and semi-angle 
$\arctan \frac{\upp{\mu}}{\low{\mu}}$.
Moreover, $A_2$ has compact resolvent.
\end{proposition}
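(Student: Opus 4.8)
The plan is to derive everything from the form-theoretic setup already in place. Recall that \eqref{J-ellipticity} says precisely that $\gt$ is $\gJ$-elliptic in the sense of \cite{AE2}, and that by Lemma~\ref{lLpR202}\ref{lLpR202-1} the map $\gJ$ is continuous with dense range. First I would invoke \cite[Theorem~2.1]{AE2} (or its sectoriality companion): since $\gt$ is continuous, sectorial and $\gJ$-elliptic with the numerical range estimate
\[
\bigl|\IIm \gt[u,u]\bigr|
= \Bigl| \IIm \int_\Omega \mu \nabla u \cdot \overline{\nabla u}\, dx \Bigr|
\le \upp{\mu}\, \norm{\nabla u}_{L^2(\Omega)}^2
\le \frac{\upp{\mu}}{\low{\mu}}\, \RRe \gt[u,u],
\]
the numerical range of $\gt$ (hence of $A_2$, which is the operator associated with $\gt$ via $\gJ$ through \eqref{e-opdef}) is contained in the closed sector with vertex $0$ and semi-angle $\arctan\frac{\upp{\mu}}{\low{\mu}}$. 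Combined with the fact that $\gt$ is $\gJ$-elliptic and densely defined, the abstract theory yields that $A_2$ is $m$-sectorial with exactly this vertex and semi-angle; in particular $0$ is in (the closure of) the numerical range and $A_2 + \lambda$ is invertible for all $\lambda$ outside the sector, with the standard resolvent bound. The only point to check by hand is the ellipticity estimate above, which is immediate from Assumption~\ref{a-coeff}.

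For the compact resolvent, I would argue as follows. Fix $\lambda > 0$. Given $f \in \LL^2$, let $\varphi = (A_2+\lambda)^{-1} f$ and let $u \in W^{1,2}_\Gamma$ be the associated form element, $\gJ u = \varphi$. Testing \eqref{e-opdef} with $v = u$ and using \eqref{J-ellipticity} gives
\[
(\low{\mu}\wedge 1)\, \norm{u}_{W^{1,2}_\Gamma}^2
\le \RRe \gt[u,u] + \norm{\gJ u}_{\LL^2}^2
\le \RRe (f,\gJ u)_{\LL^2} + (1-\lambda)\norm{\gJ u}_{\LL^2}^2,
\]
and since $\norm{\gJ u}_{\LL^2} \le \norm{\gJ}\,\norm{u}_{W^{1,2}_\Gamma}$, a Cauchy--Schwarz and absorption argument bounds $\norm{u}_{W^{1,2}_\Gamma}$ by a constant times $\norm{f}_{\LL^2}$. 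Thus the map $f \mapsto u$ is bounded from $\LL^2$ into $W^{1,2}_\Gamma$. Composing with $\gJ$ recovers $(A_2+\lambda)^{-1} = \gJ \circ (f\mapsto u)$, which is therefore the composition of a bounded operator $\LL^2 \to W^{1,2}_\Gamma$ with the map $\gJ \colon W^{1,2}_\Gamma \to \LL^2$. Since $\gJ$ is compact by Lemma~\ref{lLpR202}\ref{lLpR202-3}, the resolvent $(A_2+\lambda)^{-1}$ is compact, and hence $A_2$ has compact resolvent.

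The main obstacle is essentially bookkeeping rather than a genuine difficulty: one must make sure that the operator $A_2$ defined abstractly via \eqref{e-opdef} really coincides with the operator associated with the $\gJ$-elliptic form in \cite{AE2}, so that the sectoriality statements there apply verbatim, and that the semi-angle claimed is sharp (the vertex being $0$ follows from $\RRe \gt[u,u]\ge 0$, and the semi-angle from the ratio estimate above, which is in fact attained in the limit of suitable test functions when $\mu$ is constant). Everything else is a routine application of the form machinery together with the compactness of $\gJ$.
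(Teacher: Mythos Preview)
Your argument is correct and follows essentially the same route as the paper: invoke \cite[Theorem~2.1]{AE2} for $m$-sectoriality, verify vertex $0$ and the semi-angle from the elementary estimate $|\IIm \gt[u,u]|\le \upp{\mu}\|\nabla u\|_{L^2}^2 \le \frac{\upp{\mu}}{\low{\mu}}\RRe\gt[u,u]$, and use the compactness of $\gJ$ (Lemma~\ref{lLpR202}\ref{lLpR202-3}) for the compact resolvent. The only difference is that the paper cites \cite[Lemma~2.7]{AE2} for the last step, whereas you unpack that lemma by hand (your factorisation $(A_2+\lambda)^{-1}=\gJ\circ(f\mapsto u)$ is exactly its content); note that your absorption works cleanly for $\lambda\ge 1$, which suffices since compactness of one resolvent implies compactness of all.
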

\begin{proof}
It follows from \cite[Theorem~2.1]{AE2} that $A_2$ is $m$-sectorial. Let $\varphi \in \dom(A_2)$
 and $u \in W^{1,2}_\Gamma$ with $\gJ u = \varphi$. Then $\RRe ( A_2 \varphi, \varphi)_{\LL^2} 
= \RRe  \gt[u,u]  \geq 0$. Hence the vertex is $0$. Further, one has $\RRe \gt[u,u] \geq
 \low{\mu} \int_\Omega |\nabla \RRe u|^2 +|\nabla \IIm u|^2\, dx $ and 
\[
|\!\IIm \gt[u,u]| \leq 2 \upp{\mu} 
\int_\Omega |\nabla \RRe u| |\nabla \IIm u| \, dx \leq \upp{\mu}
\int_\Omega |\nabla \RRe u|^2 +|\nabla \IIm u|^2\, dx.
\]
Thus $|\arg ( A_2 \varphi, \varphi)_{\LL^2} | \leq \arctan \frac{\upp{\mu}}{\low{\mu}}$
if $\varphi \neq 0$.

Since the map $\gJ$ is compact by Lemma~\ref{lLpR202}\ref{lLpR202-3},
the generator has compact resolvent by \cite[Lemma~2.7]{AE2}.\end{proof}

We continue with the analysis of the operator $A_2$.
By Proposition~\ref{pLpR203} the operator $A_2$ is $m$-sectorial with vertex $0$ and semi-angle
$\arctan \frac{\upp{\mu}}{\low{\mu}}$.
Hence by \cite[Theorem IX.1.24]{Kat1} the operator $-A_2$ generates a 
holomorphic semigroup, denoted by $S$, which is holomorphic and contractive on the 
sector with semi-angle $\arctan \frac{\upp{\mu}}{\low{\mu}}$.

\begin{proposition} \label{pLpR205}
The semigroup $S$ leaves $\LL^2_\R$ invariant, it is submarkovian and positive.
\end{proposition}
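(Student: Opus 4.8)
The plan is to verify the three assertions---invariance of $\LL^2_\R$, submarkovianity, and positivity---by means of the classical Ouhabaz-type criteria for forms, adapted to the generalized setting with the embedding $\gJ$ (Remark~\ref{r-identopJ}), exactly as in \cite[Section~2]{AE2} where such criteria are provided in the $\gJ$-framework. First I would note that the form $\gt$ has real coefficients, so $\gt[\bar u,\bar v] = \overline{\gt[u,v]}$; together with the fact that $\gJ$ commutes with complex conjugation (which is immediate from the definition of $\tr$ as an averaged limit), this gives invariance of $\LL^2_\R$ under $S$, equivalently under the resolvent of $A_2$. For positivity and submarkovianity it suffices, by the Beurling--Deny-type criteria in the $\gJ$-setting, to check the two conditions: (i) if $u\in W^{1,2}_\Gamma$ is real-valued then $u^+ \in W^{1,2}_\Gamma$, $\gJ(u^+) = (\gJ u)^+$ and $\gt[u^+,u^-] \le 0$; and (ii) if $u\in W^{1,2}_\Gamma$ is real-valued then $u\wedge\one_\Omega\in W^{1,2}_\Gamma$, $\gJ(u\wedge\one_\Omega) = (\gJ u)\wedge\one_{\Omega\cup\Gamma}$ and $\gt[u\wedge\one_\Omega, (u-\one_\Omega)^+] \ge 0$.

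The compatibility of the truncations with $\gJ$ is the content of Lemma~\ref{lLpR204} for the unit-cutoff, and the analogous statement for $u^+$ is proved the same way: it holds trivially for $u \in C^\infty_\Gamma(\Omega)$ and extends by density using continuity of $u\mapsto \gJ(u^+)$ and $u\mapsto (\gJ u)^+$ on the real part of $W^{1,2}_\Gamma$ (continuity of $u\mapsto u^+$ on $W^{1,2}$ is standard, and $\gJ$ is continuous by Lemma~\ref{lLpR202}\ref{lLpR202-1}). That $u^+, u\wedge\one_\Omega \in W^{1,2}_\Gamma$ rests on the stability of the space $W^{1,2}_\Gamma$ under these lattice operations, which is recorded in the proof of \cite[Theorem~3.1]{ERe1} (and quoted already in Lemma~\ref{lLpR204}).

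The form inequalities are then a direct computation with the gradient. Since $\nabla u^+ = \one_{\{u>0\}}\nabla u$ and $\nabla u^- = -\one_{\{u<0\}}\nabla u$ have disjoint supports, $\gt[u^+,u^-] = \int_\Omega \mu\nabla u^+\cdot\nabla u^- \, dx = 0 \le 0$; similarly $\nabla(u\wedge\one_\Omega) = \one_{\{u<1\}}\nabla u$ and $\nabla(u-\one_\Omega)^+ = \one_{\{u>1\}}\nabla u$ have disjoint supports, so $\gt[u\wedge\one_\Omega,(u-\one_\Omega)^+] = 0 \ge 0$. (In fact the vanishing is even stronger than the inequalities demand.) Feeding these into the $\gJ$-version of the Beurling--Deny criteria from \cite{AE2} yields positivity and the $L^\infty$-contractivity half of submarkovianity; combined with the contractivity on $\LL^2$ already known from $m$-sectoriality with vertex $0$ (Proposition~\ref{pLpR203}), this is precisely submarkovianity.

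The only genuinely delicate point, and the one I would expect to be the main obstacle, is the identity $\gJ(u^+) = (\gJ u)^+$ (and its cutoff analogue) on the boundary component, i.e.\ that the trace of $u^+$ equals the positive part of the trace of $u$ $\rho$-a.e.\ on $\Gamma\cup\SX$. The subtlety is that $\tr$ is defined as an $L^1$-averaged limit over balls intersected with $\Omega$, and one must know that this commutes with the nonlinear map $t\mapsto t^+$ at $\rho$-almost every boundary point; the density argument handles this cleanly provided one already has the continuity of both sides as maps into $\LL^2$, which is why I would lean on Lemma~\ref{lLpR202}\ref{lLpR202-1} and the structure of the proof of Lemma~\ref{lLpR204} rather than attempting a pointwise argument. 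Everything else is routine.
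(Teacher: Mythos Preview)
Your proposal is correct and uses the same machinery as the paper---the invariance criteria of \cite[Proposition~2.9]{AE2} in the $\gJ$-setting, together with Lemma~\ref{lLpR204}---so the arguments are essentially parallel. The one organizational difference worth noting is that the paper handles positivity more economically: instead of invoking a separate first Beurling--Deny criterion (which requires the extra lemma $\gJ(u^+)=(\gJ u)^+$ that you sketch), the paper works only with the single convex set $C=\{u\in\LL^2_\R:u\le\one\}$, obtains its invariance via Lemma~\ref{lLpR204}, and then deduces positivity from $C$-invariance by the scaling trick $n\varphi\in C\Rightarrow S_t(n\varphi)\le\one$ for all $n$. This spares one lemma at the cost of a slightly less transparent route to positivity. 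A small slip in your write-up: submarkovianity is $L^\infty$-contractivity and has nothing to do with the $\LL^2$-contractivity from Proposition~\ref{pLpR203}; what you actually need to pass from the real $L^\infty$-bound to the complex one is the phase-rotation argument the paper gives, $\|S_t\varphi\|_{\LL^\infty}=\|S_t\RRe(e^{i\alpha}\varphi)\|_{\LL^\infty}$.
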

\begin{proof} 
Clearly the set $\LL^2_\R$ is closed and convex in $\LL^2$.
Moreover, $\varphi \mapsto \RRe \varphi$ is the projection from 
$\LL^2$ onto $\LL^2_\R$ and $\RRe \gt (u, u - \RRe u) = 0$ for all 
$u \in W^{1,2}_\Gamma$.
Since the form $\gt$ is accretive, the set $\LL^2_\R$ is invariant under the semigroup by 
\cite[Proposition~2.9(ii)]{AE2}

Next, let $C = \{ u \in \LL^2 : u \mbox{ is real valued and } u \leq \one \} $.
Then $C$ is closed and convex.
Let $P \colon \LL^2 \to C$ denote the orthogonal projection.
Then $P u = (\RRe u) \wedge \one_{\Omega \cup \Gamma}$.
Let $u \in W^{1,2}_\Gamma$.
By Lemma~\ref{lLpR204} one has  
$(\RRe u) \wedge \one_\Omega \in W^{1,2}_\Gamma$ and $P\gJ u = \gJ((\RRe u) \wedge \one_\Omega)$.
Moreover, an easy calculation gives
\[
\RRe \gt[(\RRe u) \wedge \one_\Omega, u - (\RRe u) \wedge \one_\Omega] = 0  
 .  \]
Observing that the form $\gt$ is accretive, it follows from \cite[Proposition~2.9(ii)]{AE2} 
that $C$ is invariant under the semigroup $S$.
Now let $\varphi\in \LL^2 \cap \LL^\infty$ and $t > 0$.
There exists an $\alpha \in \R$ such that 
$\|S_t \varphi\|_{\LL^\infty} = \|\RRe (e^{i \alpha} S_t \varphi)\|_{\LL^\infty}$. 
But 
$\RRe (e^{i \alpha} S_t \varphi)
= S_t \RRe (e^{i \alpha} \varphi)$.
Therefore 
\[
\|S_t \varphi\|_{\LL^\infty}
= \|S_t \RRe (e^{i \alpha} \varphi)\|_{\LL^\infty}
\leq \|\RRe (e^{i \alpha} \varphi)\|_{\LL^\infty}
\leq \|\varphi\|_{\LL^\infty}
\]
and $S$ is submarkovian.

Finally, if $\varphi \in \LL^2_\R$ and $\varphi \leq 0$, then $n \varphi \in C$
for all $n \in \N$.
So $S_t(n \varphi) \leq \one$ for all $t > 0$ and $n \in \N$.
Therefore $S_t \varphi \leq 0$ and $S$ is positive.
\end{proof}

\begin{corollary} \label{cLpR209}
For all $p \in [1,\infty]$ the semigroup $S$ extends consistently 
to a contraction semigroup $S^{(p)}$ on $\LL^p$.
The semigroup $S^{(p)}$ is strongly continuous for all $p \in [1,\infty)$.
\end{corollary}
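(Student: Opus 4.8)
The plan is to invoke the standard extension/interpolation machinery for submarkovian semigroups, which applies verbatim once one knows that $S$ acts consistently and contractively on $\LL^1 \cap \LL^\infty$. First I would recall from Proposition~\ref{pLpR205} that $S$ is submarkovian, i.e. $0 \le \varphi \le \one_{\Omega \cup \Gamma}$ implies $0 \le S_t \varphi \le \one_{\Omega \cup \Gamma}$, and positive. Positivity together with the contractivity on $\LL^\infty$ established in Proposition~\ref{pLpR205} gives, by the usual splitting of a real-valued $\varphi \in \LL^2 \cap \LL^\infty$ into positive and negative parts and linearity, that $\|S_t \varphi\|_{\LL^\infty} \le \|\varphi\|_{\LL^\infty}$ for all $\varphi \in \LL^2 \cap \LL^\infty$; hence $S_t$ extends to a contraction on $\LL^\infty$. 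Next, since $S$ is self-consistent with respect to the duality between $\LL^1$ and $\LL^\infty$ — the measure $dx + d\rho$ on $\Omega \cup \Gamma$ is $\sigma$-finite, and the adjoint semigroup $S^*$ is the one generated by $-A_2^*$, which is associated with the adjoint form $\gt^*[u,v] = \overline{\gt[v,u]}$ and therefore is itself submarkovian by the same argument applied to $\mu^{\mathrm T}$ — one obtains by duality that $S_t$ is an $\LL^1$-contraction on $\LL^1 \cap \LL^2$.

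Having $\LL^1$- and $\LL^\infty$-contractivity, the Riesz–Thorin interpolation theorem yields $\|S_t \varphi\|_{\LL^p} \le \|\varphi\|_{\LL^p}$ for all $p \in [1,\infty]$ and all $\varphi \in \LL^1 \cap \LL^\infty$ (equivalently $\varphi \in \LL^2 \cap \LL^p$, using that $\LL^2 \cap \LL^\infty$ is dense in $\LL^p$ for $p < \infty$). Since $\LL^2 \cap \LL^p$ is dense in $\LL^p$ for every $p \in [1,\infty)$ — the underlying measure being finite, as $\Sigma$ and $\Gamma$ carry finite Hausdorff measure by Remark~\ref{r-hypersurf} and Assumption~\ref{a-d-1}, so simple functions lie in all $\LL^p$ simultaneously — the contraction $S_t|_{\LL^2 \cap \LL^p}$ extends uniquely to a contraction $S_t^{(p)}$ on $\LL^p$; consistency of the family $(S_t^{(p)})_{t>0}$ and the semigroup law pass to the extension by density, and for $p = \infty$ one takes $S_t^{(\infty)}$ to be the already-constructed $\LL^\infty$-contraction (alternatively the adjoint of $S_t^{(1)}$).

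For strong continuity when $p \in [1,\infty)$: on the dense subspace $\LL^2 \cap \LL^p$ one has, for fixed $\varphi$, that $S_t^{(p)} \varphi = S_t \varphi \to \varphi$ in $\LL^2$ as $t \downarrow 0$ (since $S$ is a $C_0$-semigroup on $\LL^2$, being holomorphic and hence strongly continuous on the closed sector), and $\sup_{t \in (0,1]} \|S_t^{(p)} \varphi\|_{\LL^p} \le \|\varphi\|_{\LL^p} < \infty$; a subsequence converges $(dx+d\rho)$-a.e., and the $\LL^\infty$-bound $\|S_t^{(p)}\varphi\|_{\LL^\infty} \le \|\varphi\|_{\LL^\infty}$ for $\varphi \in \LL^2\cap\LL^\infty$ together with the finite total measure lets one upgrade a.e.\ convergence to $\LL^p$-convergence by dominated convergence, giving $S_t^{(p)}\varphi \to \varphi$ in $\LL^p$ on the dense set; the uniform contraction bound then extends strong continuity to all of $\LL^p$ by the standard $\varepsilon/3$ argument. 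I do not expect a serious obstacle here — everything is the textbook submarkovian-semigroup extension argument, as found e.g. in Ouhabes or Davies — the only point requiring a word of care is the duality argument for the $\LL^1$-bound, which hinges on recognizing that the adjoint form also satisfies the hypotheses of Proposition~\ref{pLpR205} (replace $\mu$ by its transpose, whose ellipticity constants are the same), so that $S^*$ is likewise submarkovian.
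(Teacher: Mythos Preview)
Your proposal is correct and follows essentially the same approach as the paper: the key observation that the adjoint semigroup $S^*$ is associated with the transposed coefficient matrix $\mu^{\mathrm T}$ (which satisfies Assumption~\ref{a-coeff} with the same constants), so that $S^*$ is likewise submarkovian and duality yields the $\LL^1$-contractivity, after which standard interpolation and density arguments finish the job. The paper's proof simply records this observation about $\mu^{\mathrm T}$ and then cites \cite[page~56]{Ouh5} for the remaining ``standard interpolation and duality arguments'' that you have spelled out in detail.
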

\begin{proof}  
Observe that if the coefficient matrix $\mu$ satisfies the conditions 
of Assumption~\ref{a-coeff}, then its transpose satisfies these as well. 
Thus the dual semigroup $S^*$ shares the same properties as $S$. 
Now the assertion follows from Proposition \ref{pLpR205} and standard interpolation and duality arguments, 
see e.g.\ \cite[page 56]{Ouh5}. 
\end{proof}

We denote the generator of $S^{(p)}$ by $-A_p$.
Then $-A_p$ is dissipative by the Lumer--Phillips theorem.
If no confusion is possible we write $S = S^{(p)}$.
\begin{rem} \label{r-invers}
It is possible to prove the dissipativity of $-A_p$ also by showing that the form
$-\mathfrak t$ is $p$-dissipative, cf. \cite{CiaM}. 
\end{rem}

\begin{lemma} \label{lLpR206}
\mbox{}
\begin{statements}
\item \label{lLpR206-1}
The semigroup $S$ is ultracontractive.
Moreover, for all $\beta > d-1$  and $\omega >0$ there exists a $c > 0$ such that 
\[
\|S_t \varphi\|_{\LL^q} 
\leq c \, t^{-\beta ( \frac{1}{p} - \frac{1}{q} )} e^{\omega t}\|\varphi\|_{\LL^p}
\]
for all $t > 0$, $\varphi \in \LL^p$ and $p,q \in [1,\infty]$ with $p \leq q$.
\item \label{lLpR206-2}
If $1 \leq p < q \leq \infty$ and $j \in \N$ are such that 
$\frac{d-1}{j} \, ( \frac{1}{p} - \frac{1}{q} ) < 1$, then 
the operator $(A_p + 1)^{-j}$ maps $\LL^p$ continuously into $\LL^q$.
\item \label{lLpR206-3}
The operator $A_p$ has compact resolvent for all $p \in (1,\infty)$.
\item \label{lLpR206-4}
If the matrix of coefficients $\mu$ is symmetric, then the operator $A_2$ is 
self-adjoint and positive.
\end{statements}
\end{lemma}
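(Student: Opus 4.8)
The plan is to establish ultracontractivity first, as it is the backbone for all four statements, and then deduce the remaining items. For `\ref{lLpR206-1}', the starting point is the Nash-type or Sobolev-type inequality available in this setting: by Lemma~\ref{lLpR202}\ref{lLpR202-2} one has $\gJ W^{1,2}_\Gamma \subset \LL^p$ for the relevant range of $p$, and combining \eqref{J-ellipticity} with the Sobolev embedding of $W^{1,2}_\Gamma$ (via the extension operator $\gE$ of Proposition~\ref{p-extension} and Lemma~\ref{l-measure}) yields a Nash inequality of the form $\|\gJ u\|_{\LL^2}^{2+4/\beta} \le c\,(\RRe\gt[u,u] + \|\gJ u\|_{\LL^2}^2)\,\|\gJ u\|_{\LL^1}^{4/\beta}$ for every $\beta > d-1$. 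The subtlety here is that the ambient measure $dx + d\rho$ is not doubling in the usual Euclidean sense, so one cannot quote a standard heat-kernel bound directly; instead I would feed the Nash inequality into the abstract equivalence (Nash inequality $\Longleftrightarrow$ $L^1 \to L^\infty$ ultracontractivity bound) as in Coulhon's work or \cite[Ch.~2]{Ouh5}. This gives $\|S_t\|_{\LL^1 \to \LL^\infty} \le c\,t^{-\beta}e^{\omega t}$; since $S$ is also contractive on every $\LL^p$ by Corollary~\ref{cLpR209}, the Riesz--Thorin interpolation between $\LL^1\to\LL^\infty$ and $\LL^p\to\LL^p$ gives the stated $t^{-\beta(1/p-1/q)}$ bound for all $1\le p\le q\le\infty$.

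For `\ref{lLpR206-2}', I would use the standard integral representation $(A_p+1)^{-j} = \frac{1}{(j-1)!}\int_0^\infty t^{j-1} e^{-t} S_t\,dt$ together with the ultracontractivity estimate from `\ref{lLpR206-1}'. Choosing $\omega$ small enough that $e^{\omega t}e^{-t}$ is integrable against $t^{j-1-\beta(1/p-1/q)}$ near $t=\infty$, and noting that the exponent $j-1-\beta(1/p-1/q) > -1$ exactly when $\beta(1/p-1/q)/j < 1$ — which holds for some admissible $\beta > d-1$ precisely under the hypothesis $\frac{d-1}{j}(\frac1p-\frac1q)<1$ — the integral converges in $\mathcal L(\LL^p;\LL^q)$. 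For `\ref{lLpR206-3}', the resolvent of $A_p$ maps $\LL^p$ into $\LL^q$ for some $q > p$ by `\ref{lLpR206-2}' with a suitable power; one then writes a high enough power of the resolvent as a composition of maps each improving integrability, lands in some $\LL^q$ with $q > p$, and at the $\LL^2$ level invokes the compactness of $\gJ$ (Proposition~\ref{pLpR203} already gives compact resolvent for $A_2$). Interpolating/extrapolating compactness through the $\LL^p$-scale — compose with the compact $\LL^2$-resolvent and use boundedness on the other pieces, or use that a power of the resolvent factors through a compact embedding — yields compact resolvent for all $p \in (1,\infty)$. Finally `\ref{lLpR206-4}' is immediate: if $\mu$ is symmetric then $\gt[u,v] = \overline{\gt[v,u]}$, so $\gt$ is a symmetric closed form, whence the associated operator $A_2$ (constructed via $\gJ$ exactly as in the symmetric case of \cite{AE2}) is self-adjoint; positivity follows from $\RRe\gt[u,u] \ge 0$ shown in Proposition~\ref{pLpR203}.

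The main obstacle I expect is `\ref{lLpR206-1}', specifically making the passage from the Sobolev/Nash inequality to a genuine ultracontractivity bound rigorous given the unusual base space $\LL^p = L^p(\Omega\cup\Gamma; dx+d\rho)$: the measure mixes a $d$-dimensional bulk part with a $(d-1)$-dimensional surface part, so one must be careful that the Nash inequality holds with the correct dimensional exponent $\beta$ (which is why the statement requires $\beta > d-1$ rather than $\beta = d$ — the boundary term dictates the worst-case exponent) and that the abstract Nash $\Leftrightarrow$ ultracontractivity machinery applies to the form $\gt$ acting through the non-injective-looking embedding $\gJ$ rather than a genuine inclusion. Everything else is a routine consequence of ultracontractivity plus the form-theoretic facts already assembled in Section~\ref{Sec-2}.
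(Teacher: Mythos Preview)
Your plan is correct and uses the right ingredients. The one place where your route differs from the paper's is part~\ref{lLpR206-1}: rather than deriving a Nash inequality and invoking an abstract Nash\,$\Rightarrow$\,ultracontractivity equivalence, the paper shows directly that $\|S_t\|_{\LL^2\to\LL^r}\le c\,t^{-1/2}e^{\omega t}$ for the admissible Sobolev exponent $r$ --- using that $S_t\varphi=\gJ u$ for some $u\in W^{1,2}_\Gamma$, the coercivity estimate \eqref{J-ellipticity}, and the holomorphy bound $\RRe(A_2 S_t\varphi,S_t\varphi)_{\LL^2}\le c\,t^{-1}e^{2\omega t}\|\varphi\|_{\LL^2}^2$ --- and then applies the extrapolation result \cite[Lemma~6.1]{Ouh5} to upgrade $\LL^2\to\LL^r$ to $\LL^2\to\LL^\infty$. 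The two approaches encode the same Sobolev information; the paper's is marginally more direct in that it bypasses any worry about whether the Nash\,$\Leftrightarrow$\,ultracontractivity machinery (which in \cite{Ouh5} lives in Chapter~6, not Chapter~2) applies cleanly to the non-symmetric sectorial form acting through $\gJ$. Incidentally, your concern about $\gJ$ being ``non-injective-looking'' is unfounded: the first component $u\mapsto u\in L^2(\Omega)$ is already injective.

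For \ref{lLpR206-2}--\ref{lLpR206-4} your argument coincides with the paper's. In particular for \ref{lLpR206-3} the paper simply writes ``Proposition~\ref{pLpR203} and interpolation'', meaning the Krasnosel'skii-type interpolation of compactness between $\LL^2$ (where the resolvent is compact) and $\LL^1$, $\LL^\infty$ (where it is bounded); your factoring-through-$\LL^2$ description is a hands-on variant of the same idea.
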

\begin{proof}
`\ref{lLpR206-1}'.
Let $r \in (2,\infty)$ be such that $(d-2) r < 2(d-1)$.
Then it follows from Lemma~\ref{lLpR202}\ref{lLpR202-2} that 
$ \gJ W^{1,2}_\Gamma \subset \LL^r$, and the inclusion is continuous
by the closed graph theorem.
Let $\varphi \in \mathbb L^2$ and $t>0$.
Since $S_t \varphi\in \dom (A_2)$, there is a $u \in W^{1,2}_\Gamma$ such that $S_t \varphi = \gJ u$.
For given $\omega >0$ one has
\begin{eqnarray*}
\|S_t \varphi\|_{\mathbb L^r}^2  
= \|\gJ u\|_{\mathbb L^r}^2  \leq C\,\|u\|_{W^{1,2}_\Gamma}^2 
& \leq & C (\low{\mu} \wedge 1)^{-1} \big( \RRe \gt[u,u] + \|\gJ u\|_{\LL^2}^2 \big)\\
& = & C (\low{\mu} \wedge 1)^{-1}  \big( \RRe ( A_2 S_t \varphi,  S_t \varphi)_{\mathbb L^2} + \|S_t \varphi\|_{\LL^2}^2 \big)\\
& \leq & C' \, t^{-1} e^{2\omega t}\|\varphi\|_{\LL^2}^2,
\end{eqnarray*}
for suitable $C,C' > 0$, 
using \eqref{J-ellipticity}, the definition of $A_2$, the Cauchy--Schwarz inequality 
and the holomorphy and contractivity of $S_t$.
Therefore the semigroup $S$ is ultracontractive, and by \cite[Lemma~6.1]{Ouh5} there exists 
a $c > 0$ such that 
\[
\|e^{-\omega t}S_t \varphi\|_{\LL^\infty} \leq c \, t^{- \frac{r}{2(r-2)} } \, \|\varphi\|_{\LL^2}
\]
for all $t > 0$ and $\varphi \in \LL^2$.
Now duality and interpolation give Statement~\ref{lLpR206-1}.

Statement~\ref{lLpR206-2} follows from \ref{lLpR206-1} and the well-known formula 
\[
(A_p+1)^{-j} = \frac{1}{(j-1)!}\int_0^\infty t^{j-1}e^{-t} S_t \, dt.
\]
Statement~\ref{lLpR206-3} is a consequence of Proposition~\ref{pLpR203}
and interpolation.
The last statement of the lemma is easy to verify.
\end{proof}

\subsection{Multipliers acting on Lebesgue spaces}
\label{s-mult}
In order to solve \eqref{e-parabol}--\eqref{e-initial}, we divide \eqref{e-parabol}
 (at first formally) by $\varepsilon$.
Obviously, one is then confronted with the necessity
 to investigate the functional analytic properties of operators of the type $\varsigma A_p$, 
where $\varsigma$ is a bounded strictly positive measurable function.
Concerning the generator property of an analytic semigroup in a space $L^p(\Omega)$
 this was carried out in \cite{GKR} and concerning maximal parabolic regularity
on $L^p(\Omega)$ in \cite{HiebR}.
In the latter case the decisive instrument was 
the insight from \cite{DO} that a suitable multiplicative perturbation does
 not destroy upper Gaussian estimates, which in turn imply maximal parabolic 
regularity on $L^p(\Omega)$.
Unfortunately, we cannot apply this here, since our
 Lebesgue space does not only live `on the volume'.
But a surprisingly simple 
trick allows us to overcome the problem in the present context.

The next proposition is of independent interest.

\begin{proposition} \label{pLpR211}
Let $(X,\cb,\lambda)$ be a measure space and let $\tau \colon X \to (0,\infty)$ be 
a measurable function such that both $\tau$ and $\tau^{-1}$ are bounded.
Let $p \in [1,\infty)$ and let $T$ be an operator in $L^p(X,d\lambda)$.
\begin{statements}
\item \label{pLpR211-1}
If $T$ is dissipative on $L^p(X,d\lambda)$, then $\tau T$ is dissipative on 
$L^p(X,\tau^{-1} d\lambda)$.
\item \label{pLpR211-2}
If $T$ generates a strongly continuous contraction semigroup on $L^p(X,d\lambda)$, 
then $\tau T$ generates a strongly continuous contraction semigroup on 
$L^p(X,\tau^{-1} d\lambda)$.
\item \label{pLpR211-2.5}
If $p = 2$, $\theta \in (0,\frac{\pi}{2}]$ and 
$T$ generates a holomorphic semigroup in $L^2(X,d\lambda)$ which is contractive in the 
sector with semi-angle $\theta$, 
then $\tau T$ generates a holomorphic semigroup in $L^2(X,\tau^{-1} d\lambda)$
which is contractive in the sector with semi-angle $\theta$.
\end{statements}
Now suppose that $p = 2$ and $T$ generates a strongly continuous contraction semigroup
$S$ on $L^2(X,d\lambda)$.
Denote the semigroup generated by $\tau T$ on $L^2(X,\tau^{-1} d\lambda)$ by $S^\tau$.
\begin{statements}
\setcounter{teller}{3}
\item \label{pLpR211-3}
If $S$ leaves the real valued functions invariant, then $S^\tau$ also leaves the 
real valued functions invariant.
\item \label{pLpR211-4}
If $S$ is positive, then $S^\tau$ is also positive.
\item \label{pLpR211-5}
Suppose $S$ is submarkovian.
Then $S^\tau$ is also submarkovian.
Hence for all $q \in [2,\infty)$ the semigroups $S$ and $S^\tau$ extend consistently
to a strongly continuous semigroup $S^{(q)}$ and $S^{(\tau,q)}$ on 
$L^q(X,d\lambda)$ and $L^q(X,\tau^{-1} d\lambda)$, respectively.
Let $T_q$ and $T_{\tau,q}$ denote the generators.
Then $T_{\tau,q} = \tau T_q$ for all $q \in [2,\infty)$.
\end{statements}
\end{proposition}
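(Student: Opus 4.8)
The whole proposition is driven by one elementary observation: for $v\in L^p(X,d\lambda)$ and any $\phi$ in the relevant dual space,
\[
(\tau v,\phi)_{L^p(X,\tau^{-1}d\lambda)}=\int_X \tau v\,\overline\phi\,\tau^{-1}\,d\lambda=(v,\phi)_{L^p(X,d\lambda)},
\]
so the multiplier $\tau$ in the operator cancels the weight $\tau^{-1}$ in the measure. Moreover $\dom(\tau T)=\dom(T)$ (since $\tau,\tau^{-1}\in L^\infty$), the operator $\tau T$ is closed on $L^p(X,\tau^{-1}d\lambda)$ (because $\tau^{-1}$ is a bounded multiplier and $T$ is closed), and the $L^p$-norms for $d\lambda$ and $\tau^{-1}d\lambda$ are equivalent. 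For \ref{pLpR211-1} I would use the explicit description of dissipativity in $L^p$, $1\le p<\infty$: $T$ is dissipative iff $\RRe\int_{\{u\neq0\}}(Tu)\,|u|^{p-2}\overline u\,d\lambda\le\int_{\{u=0\}}|Tu|\,d\lambda$ for all $u\in\dom(T)$ (the right-hand term only matters for $p=1$). Replacing $T$ by $\tau T$ and $d\lambda$ by $\tau^{-1}d\lambda$ multiplies both integrands by $\tau\cdot\tau^{-1}=1$, so the inequality is preserved; this is \ref{pLpR211-1}.

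For \ref{pLpR211-2} I would verify the Lumer--Phillips hypotheses for $\tau T$ on $L^p(X,\tau^{-1}d\lambda)$: it is densely defined, closed, and dissipative by the previous paragraph, so it remains to find $\lambda_0>0$ with $\lambda_0-\tau T$ surjective. Writing $M_\sigma$ for multiplication by $\sigma$, one has $\lambda_0-\tau T=M_\tau(\lambda_0 M_{\tau^{-1}}-T)$ on $\dom(T)$ and $M_\tau$ is a bijection of $L^p$, so it suffices that $\lambda_0 M_{\tau^{-1}}-T$ be surjective. With $\kappa_0:=\lambda_0\,\|\tau^{-1}\|_{L^\infty}$ one gets $\|\lambda_0\tau^{-1}-\kappa_0\|_{L^\infty}<\kappa_0$ (here the boundedness of $\tau$ enters), hence $\|(\kappa_0-T)^{-1}(\lambda_0 M_{\tau^{-1}}-\kappa_0)\|<1$ because $\|(\kappa_0-T)^{-1}\|\le\kappa_0^{-1}$, so $\lambda_0 M_{\tau^{-1}}-T=(\kappa_0-T)\bigl(I+(\kappa_0-T)^{-1}(\lambda_0 M_{\tau^{-1}}-\kappa_0)\bigr)$ is invertible by a Neumann series. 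Statement \ref{pLpR211-2.5} then follows by applying \ref{pLpR211-2} to the rotated operators: $T$ generating a holomorphic semigroup on $L^2(X,d\lambda)$ that is contractive on the sector of semi-angle $\theta$ is the same as $e^{i\psi}T$ generating a strongly continuous contraction semigroup for every $|\psi|<\theta$, and then $e^{i\psi}(\tau T)=\tau(e^{i\psi}T)$ does so on $L^2(X,\tau^{-1}d\lambda)$, which is the assertion for $\tau T$.

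Parts \ref{pLpR211-3}, \ref{pLpR211-4} and the submarkovianity in \ref{pLpR211-5} I would handle through the resolvent identity $(\lambda-\tau T)^{-1}=(\lambda M_{\tau^{-1}}-T)^{-1}M_{\tau^{-1}}$ for $\lambda>0$ (a restatement of the factorization above, legitimate by \ref{pLpR211-2}). For \ref{pLpR211-3}: $S$ leaves the real functions invariant iff $T$ is real ($\dom(T)$ conjugation-invariant, $\overline{Tu}=T\overline u$); since $\tau$ is real-valued, $\tau T$ is then real as well, so $(\lambda-\tau T)^{-1}$ preserves real functions and so does $S^\tau$. For \ref{pLpR211-4}: a standard perturbation argument shows that $T-\lambda M_{\tau^{-1}}$ generates a positive contraction semigroup $R$ with $\|R(t)\|\le e^{-\lambda t/\|\tau\|_{L^\infty}}$ (use $-\lambda M_{\tau^{-1}}+\lambda\|\tau^{-1}\|_{L^\infty}I\ge0$ for positivity, and dissipativity of $T$ and of $T-\lambda M_{\tau^{-1}}+\lambda\|\tau\|_{L^\infty}^{-1}I$ for the norm bound), hence $(\lambda M_{\tau^{-1}}-T)^{-1}=\int_0^\infty R(t)\,dt\ge0$, so $(\lambda-\tau T)^{-1}\ge0$ and $S^\tau$ is positive by the exponential formula. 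For submarkovianity: applying the positive operator $(\lambda M_{\tau^{-1}}-T)^{-1}$ to $(\lambda M_{\tau^{-1}}-T)\one=\lambda\tau^{-1}-T\one\ge\lambda\tau^{-1}$ (valid since $S$ submarkovian means $T\one\le0$ in the appropriate sense) gives $(\lambda M_{\tau^{-1}}-T)^{-1}(\lambda\tau^{-1})\le\one$; hence for $0\le h\le\one$ one obtains $0\le\lambda(\lambda-\tau T)^{-1}h=\lambda(\lambda M_{\tau^{-1}}-T)^{-1}(\tau^{-1}h)\le\lambda(\lambda M_{\tau^{-1}}-T)^{-1}(\tau^{-1})\le\one$, so $\lambda(\lambda-\tau T)^{-1}$ preserves the order interval $[0,\one]$ and $S^\tau$ is submarkovian by the exponential formula. (The manipulations with $\one$ are made rigorous by the standard truncation to sets of finite measure.)

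Finally, for the consistent extensions and the identity $T_{\tau,q}=\tau T_q$ in \ref{pLpR211-5}: submarkovianity together with $L^2$-contractivity yields, by interpolation between $L^2$ and $L^\infty$ (the latter being the same space with or without the weight $\tau^{-1}$), consistent contractive extensions $S^{(q)}$ on $L^q(X,d\lambda)$ and $S^{(\tau,q)}$ on $L^q(X,\tau^{-1}d\lambda)$ for $q\in[2,\infty)$, strongly continuous because the $L^2$-convergence and $L^\infty$-boundedness interpolate. Let $T_q$ generate $S^{(q)}$. Since $T_q$ generates a strongly continuous contraction semigroup on $L^q(X,d\lambda)$, \ref{pLpR211-2} with $p=q$ gives that $\tau T_q$ generates a strongly continuous contraction semigroup $\widetilde S$ on $L^q(X,\tau^{-1}d\lambda)$; as $T_q$ and $T=T_2$ are consistent, so are $\tau T_q$ and $\tau T$, hence their resolvents, and then $\widetilde S$ and $S^\tau$, agree on the dense subspace $L^2\cap L^q$. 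Therefore $\widetilde S=S^{(\tau,q)}$ and $T_{\tau,q}=\tau T_q$. The step I expect to cost the most is the submarkovian one in \ref{pLpR211-5}: unlike dissipativity it is not a single bracket identity that transplants mechanically, so one is forced through the resolvent factorization and the (standard but slightly delicate) handling of $\one$ on sets of finite measure; everything else reduces cleanly to \ref{pLpR211-1}, \ref{pLpR211-2} and elementary perturbation theory.
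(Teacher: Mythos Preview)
Your proof is correct, but for parts \ref{pLpR211-3}--\ref{pLpR211-5} it takes a genuinely different route from the paper, and the comparison is worth recording.

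For \ref{pLpR211-1}--\ref{pLpR211-2.5} you and the paper do essentially the same thing: verify the Lumer--Phillips hypotheses via the cancellation $(\tau v,\phi)_{\tau^{-1}d\lambda}=(v,\phi)_{d\lambda}$, then rotate by $e^{i\psi}$ for the holomorphic statement. Your Neumann-series argument for the range condition and the paper's dissipative-perturbation argument (perturb $T$ by $-(\tau^{-1}-\delta)$ with $\delta\le\tau^{-1}$) are interchangeable.

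For \ref{pLpR211-3}, \ref{pLpR211-4} and the submarkovian part of \ref{pLpR211-5} the paper does \emph{not} compute with resolvents. Instead it invokes Ouhabaz's invariance criterion: for an $m$-dissipative generator on a Hilbert space, a closed convex set $C$ is semigroup-invariant iff $\RRe(Tf,f-Pf)\le0$ on $\dom(T)$, where $P$ is the orthogonal projection onto $C$. The key observation is that for the three relevant sets (real functions, positive cone, $\{|f|\le\one\}$) the projection is a \emph{pointwise} operation, hence identical in $L^2(X,d\lambda)$ and $L^2(X,\tau^{-1}d\lambda)$; the cancellation $(\tau Tf,g)_{\tau^{-1}d\lambda}=(Tf,g)_{d\lambda}$ then transfers the inequality verbatim. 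This handles all three invariance statements in one stroke and sidesteps the issue you flag yourself, namely that $\one\notin L^2$ in general so ``$T\one\le0$'' needs a truncation argument. Your resolvent route works, but the paper's is shorter and cleaner here.

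Conversely, for the identity $T_{\tau,q}=\tau T_q$ at the end of \ref{pLpR211-5} your argument is nicer than the paper's. The paper shows directly that $D(T_{\tau,q})\cap D(T_{\tau,2})=D(T_q)\cap D(T_2)$ with agreement of the operators there, and then produces an explicit core $\{t^{-1}\int_0^t S_s^{(q)}u\,ds:u\in L^2\cap L^q\}$ to close up. You instead re-apply \ref{pLpR211-2} with $p=q$ to see that $\tau T_q$ already generates a contraction semigroup on $L^q(X,\tau^{-1}d\lambda)$, and then identify it with $S^{(\tau,q)}$ by resolvent consistency on $L^2\cap L^q$. This is more direct and avoids the core argument entirely.
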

\begin{proof}
`\ref{pLpR211-1}'.
The operator $T$ is dissipative on $L^p(X,d\lambda)$
if and only if 
\[
\RRe \int_{\{f \neq 0\}} (T f) \, |f|^{p-2} \, \overline f \, d\lambda 
\leq 0
\]
for all $f \in D(T)$.
This implies the dissipativity of $\tau T$ on $L^p(X,\tau^{-1} d\lambda)$.

`\ref{pLpR211-2}'.
Since $T$ generates a contraction semigroup on $L^p(X,d\lambda)$, it follows that 
$T$ is dissipative.
Therefore $\tau T$ is dissipative on $L^p(X,\tau^{-1} d\lambda)$
by Statement~\ref{pLpR211-1}.
So by the Lumer--Phillips theorem it remains to show that 
the operator $\tau T - 1$ is surjective on $L^p(X,\tau^{-1} d\lambda)$.

Let $\delta > 0$ be such that $\tau^{-1} - \delta \geq 0$.
Then the multiplication operator $-( \tau^{-1} - \delta)$ is dissipative on $L^p(X,d\lambda)$ 
and has a relative bound equal to zero with respect to $T$.
Therefore $T - ( \tau^{-1} - \delta)$ generates a strongly continuous contraction semigroup on 
$L^p(X,d\lambda)$ by the perturbation result \cite[Theorem~3.7]{Dav1}.
Hence $T - \tau^{-1}$ is surjective on $L^p(X,d\lambda)$ by the Lumer--Phillips theorem.
But this implies that $\tau T - 1$ is surjective on $L^p(X,\tau^{-1} d\lambda)$. 

`\ref{pLpR211-2.5}'.
For all $\alpha \in (-\theta,\theta)$ the above applies to the 
operator $e^{i \alpha} T$.
Therefore $e^{i \alpha} \tau T$ generates a strongly continuous contraction semigroup 
on $L^2(X,\tau^{-1} d\lambda)$.
Hence by \cite[Theorem IX.1.23]{Kat1}
the operator $\tau T$ generates a holomorphic semigroup in $L^2(X,\tau^{-1} d\lambda)$ which is 
contractive on the sector with semi-angle $\theta$.

Now suppose $p = 2$ and $T$ generates a strongly continuous contraction semigroup
$S$ on $L^2(X,d\lambda)$.
Let $C$ be a closed convex subset of $L^2(X,d\lambda)$.
Then $C$ is also closed and convex in $L^2(X,\tau^{-1} d\lambda)$.
Since $T$ is $m$-dissipative it follows from \cite[Theorem~2.2]{Ouh3}
that $C$ is invariant under $S$ if and only if 
$\RRe (T f, f - Pf)_{L^2(X,d\lambda)} \leq 0$ for all $f \in D(T)$, 
where $P$ is the orthogonal projection in $L^2(X,d\lambda)$ onto~$C$.
Similarly, since $\tau T$ is $m$-dissipative on $L^2(X,\tau^{-1} d\lambda)$, the set
$C$ is invariant under $S^\tau$ if and only if 
$\RRe (\tau T f, f - P^\tau f)_{L^2(X, \tau^{-1} d\lambda)} \leq 0$ for all $f \in D(\tau T)$, 
where $P^\tau$ is the orthogonal projection in $L^2(X,\tau^{-1} d\lambda)$ onto~$C$.
But $D(\tau T) = D(T)$. 
Hence if $P = P^\tau$, then $C$ is invariant under $S$ if and only if $C$ 
is invariant under $S^\tau$.

Then for the proof of Statement~\ref{pLpR211-3} choose 
$C = \{ f \in L^2(X,d\lambda) : f \mbox{ is real valued} \} $ and note that the 
projection is $P f = \RRe f = P^\tau f$.
For the proof of Statement~\ref{pLpR211-4} choose 
$C = \{ f \in L^2(X,d\lambda) : f \mbox{ is real valued and } f \geq 0 \} $ and note that the 
projection is $P f = (\RRe f)^+ = P^\tau f$.
For the submarkovian part in the proof of Statement~\ref{pLpR211-5} choose 
$C = \{ f \in L^2(X,d\lambda) : |f| \leq 1 \mbox{ a.e.} \} $ and note that the 
projection is $P f = (|f| \wedge \one) \sgn f = P^\tau f$.

It remains to prove the second part of Statement~\ref{pLpR211-5}.
Let $q \in [2,\infty)$.
Let $u \in D(T_{\tau,q}) \cap D(T_{\tau,2})$.
Write $v = T_{\tau,2} u$.
Then $u \in L^2(X, d\lambda) \cap L^q(X, d\lambda)$ and 
$T_{\tau,q} u = T_{\tau,2} u = v$.
So $v \in L^q(X, d\lambda)$ and $\tau^{-1} v \in L^q(X, d\lambda)$ since $\tau^{-1}$ is bounded.
Moreover, $T_{\tau,2} = \tau T_2$, so $u \in D(T_2)$ and $T_2 u = \tau^{-1} v$. Therefore
\[
 t^{-1} (S_t^{(q)} u - u ) 
= t^{-1} (S_t^{(2)} u - u ) = t^{-1} \int_0^t S_s^{(2)} T_2 u \, ds 
= t^{-1} \int_0^t S_s^{(q)} T_2 u \, ds
\]
for $t >0$. 
As $t\downarrow 0$, the latter term converges to $T_2 u$ in $L^q(X, d\lambda)$ by the strong continuity of $S^{(q)}$. Hence $u \in D(T_q)$ and $T_q u = T_2 u = \tau^{-1} v$. Then $\tau T_q u = v = T_{\tau,q} u$.
We proved that 
\[
D(T_{\tau,q}) \cap D(T_{\tau,2}) 
\subset D(\tau T_q) \cap D(\tau T_2)
\]
and $T_{\tau,q} u = \tau T_q u$ for all $u \in D(T_{\tau,q}) \cap D(T_{\tau,2})$.
Similarly the converse inclusion is valid, so 
\[
D(T_{\tau,q}) \cap D(T_{\tau,2}) 
= D(\tau T_q) \cap D(\tau T_2)
= D(T_q) \cap D(T_2)
 .  \]
We claim that $D(T_{q}) \cap D(T_{2})$ is dense in $D(T_{q})= D(\tau T_q)$. 
Consider the set 
\[
\cd = \{ t^{-1} \int_0^{t} S_s^{(q)} u \, ds : u \in L^q(X, d\lambda)\cap L^2(X, d\lambda),
                                               t \in (0,\infty) \} 
 .  \]
Then $\cd \subset D(T_q)$.
Since $S^{(q)}$ and $S^{(2)}$ are consistent, also $\cd \subset D(T_2)$.
So $\cd \subset D(T_q) \cap D(T_2)$.
Moreover, $\lim_{t \downarrow 0} t^{-1} \int_0^{t} S_s^{(q)} u \, ds = u$ in $L^q(X, d\lambda)$
for all $u \in L^q(X, d\lambda)\cap L^2(X, d\lambda)$ and 
$L^q(X, d\lambda)\cap L^2(X, d\lambda)$ is dense in $L^q(X, d\lambda)$.
Therefore $\cd$ is dense in $L^q(X, d\lambda)$.
Clearly $\cd$ is invariant under $S^{(q)}$.
Hence $\cd$ is a core for $T_q$ by \cite[Proposition II.1.7]{EN}.
This implies that $D(T_{q}) \cap D(T_{2})$ is dense in $D(T_{q})$.
The same arguments show that 
$D(T_{\tau,q}) \cap D(T_{\tau,2})$ is dense in $D(T_{\tau,q})$.
Hence $T_{\tau,q} = \tau T_q$.
\end{proof}

Let $\varsigma \colon \Omega \cup \Gamma \to (0,\infty)$ be a measurable 
function such that $\varsigma,\varsigma^{-1} \in \LL^\infty$.
We write 
\[
\LL^p_\varsigma
:= L^p(\Omega \cup \Gamma; \varsigma^{-1} (d x +d\rho))
 .  \]
Proposition~\ref{pLpR211} allows to transfer the conclusion of Corollary~\ref{cLpR209}
to the operators $\varsigma A_p$.

\begin{theorem} \label{t-mult03-new}
For all $p \in [1,\infty)$ the operator $-\varsigma A_p$ generates a strongly continuous 
positive semigroup $S^{(\varsigma,p)}$ of contractions on the space $\LL_\varsigma^p$.
The semigroups are consistent. 
Moreover, $S^{(\varsigma,2)}$ is holomorphic and contractive in the sector with 
semi-angle $\arctan \frac{\upp{\mu}}{\low{\mu}}$.
\end{theorem}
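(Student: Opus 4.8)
The plan is to obtain the theorem as a direct application of Proposition~\ref{pLpR211} to the measure space $(X,\cb,\lambda) := (\Omega \cup \Gamma,\cb,dx+d\rho)$ with weight $\tau := \varsigma$ (admissible since $\varsigma,\varsigma^{-1} \in \LL^\infty$), fed with the semigroups $S^{(p)}$ from Section~\ref{SLpS2.2}. Recall the available inputs: $-A_p$ generates the strongly continuous contraction semigroup $S^{(p)}$ on $\LL^p$ for every $p \in [1,\infty)$, and the family is consistent (Corollary~\ref{cLpR209}); $S = S^{(2)}$ is holomorphic and contractive on the sector of semi-angle $\arctan \frac{\upp{\mu}}{\low{\mu}}$ (Proposition~\ref{pLpR203} together with \cite[Theorem~IX.1.24]{Kat1}); and $S$ is positive (Proposition~\ref{pLpR205}).

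First I would fix $p \in [1,\infty)$ and apply Proposition~\ref{pLpR211}\ref{pLpR211-2} with $T = -A_p$: since $-A_p$ generates a strongly continuous contraction semigroup on $\LL^p = L^p(X,d\lambda)$, the operator $-\varsigma A_p$ generates a strongly continuous contraction semigroup $S^{(\varsigma,p)}$ on $\LL^p_\varsigma = L^p(X,\varsigma^{-1}d\lambda)$. For $p = 2$, Proposition~\ref{pLpR211}\ref{pLpR211-2.5} applied with $\theta = \arctan \frac{\upp{\mu}}{\low{\mu}} \in (0,\frac{\pi}{2})$ upgrades this to holomorphy and contractivity of $S^{(\varsigma,2)}$ on the sector with that semi-angle, and Proposition~\ref{pLpR211}\ref{pLpR211-4} gives that $S^{(\varsigma,2)}$ is positive. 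This settles every assertion except consistency of the family $\{S^{(\varsigma,p)}\}_{p\in[1,\infty)}$ and positivity for $p \neq 2$.

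For consistency I would exploit that $\Omega \cup \Gamma$ has finite measure, so that $\LL^q_\varsigma = \LL^q \subset \LL^p = \LL^p_\varsigma$ continuously whenever $1 \leq p \leq q < \infty$. From the consistency of the $S^{(p)}$ one deduces, by the standard computation with the integral representation of the semigroup (as in the proof of Proposition~\ref{pLpR211}\ref{pLpR211-5}), that $\dom(A_q) \subset \dom(A_p)$ and $A_p\varphi = A_q\varphi$ for $\varphi \in \dom(A_q)$; hence $\varsigma A_q \subset \varsigma A_p$. Consequently, for $\varphi \in \dom(A_q)$ the orbit $t \mapsto S^{(\varsigma,q)}_t\varphi$ is a classical solution in $\LL^q$, hence in $\LL^p$, of $u' = -\varsigma A_q u = -\varsigma A_p u$ with $u(0) = \varphi \in \dom(A_p)$, so by uniqueness of classical solutions it equals $S^{(\varsigma,p)}_t\varphi$. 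As $\dom(A_q)$ is dense in $\LL^q$ and both $S^{(\varsigma,q)}_t$ and $S^{(\varsigma,p)}_t|_{\LL^q}$ are bounded from $\LL^q$ into $\LL^p$, the two semigroups agree on all of $\LL^q$, which is the required consistency. With this in hand, positivity of $S^{(\varsigma,p)}$ for $p \neq 2$ follows at once: for $p \geq 2$ from $S^{(\varsigma,p)}_t = S^{(\varsigma,2)}_t$ on $\LL^p \subset \LL^2$, and for $p < 2$ by approximating a nonnegative $\varphi \in \LL^p$ in $\LL^p$-norm by the nonnegative truncations $\varphi \wedge n \in \LL^\infty \subset \LL^2$ and letting $n \to \infty$ in $S^{(\varsigma,p)}_t(\varphi \wedge n) = S^{(\varsigma,2)}_t(\varphi \wedge n) \geq 0$. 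The only genuinely delicate point is this consistency bookkeeping across the whole scale $p \in [1,\infty)$, since Proposition~\ref{pLpR211}\ref{pLpR211-5} delivers it directly only for $p \in [2,\infty)$; everything else is an immediate quotation of Proposition~\ref{pLpR211}.
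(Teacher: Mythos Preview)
Your proof is correct but follows a different route from the paper. You apply Proposition~\ref{pLpR211}\ref{pLpR211-2} separately for each fixed $p\in[1,\infty)$ (using that $-A_p$ is already known to generate a contraction semigroup by Corollary~\ref{cLpR209}), and then prove consistency of the resulting family $\{S^{(\varsigma,p)}\}$ by hand via the generator inclusion $A_q\subset A_p$ (from the finite-measure embedding $\LL^q\hookrightarrow\LL^p$) and uniqueness of classical solutions; positivity for $p\neq 2$ is then deduced from consistency plus positivity at $p=2$.

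The paper, by contrast, works only from $p=2$ upwards and downwards. For $p\ge 2$ it invokes Proposition~\ref{pLpR211}\ref{pLpR211-5}: the submarkovian property of $S=S^{(2)}$ transfers to $S^{(\varsigma,2)}$, which therefore extends consistently to all $\LL^q_\varsigma$ with $q\in[2,\infty)$, and the generator identification $T_{\tau,q}=\tau T_q$ comes as part of that statement. For $p\in[1,2)$ the paper passes to the dual: the adjoint of $\varsigma A_2$ on $\LL^2_\varsigma$ is $\varsigma A^\#$ (with $A^\#$ built from the transpose matrix $\mu^{\mathrm{T}}$), and applying Proposition~\ref{pLpR211}\ref{pLpR211-5} once more to $A^\#$ yields, by duality, the consistent extension of $S^{(\varsigma,2)}$ to $\LL^p_\varsigma$ for $p\in[1,2]$.

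Your approach is more elementary in that it avoids both the submarkovian machinery of \ref{pLpR211}\ref{pLpR211-5} and the duality step (in particular the identification of $(\varsigma A_2)^*$ with $\varsigma A^\#$), at the cost of the explicit consistency bookkeeping. The paper's approach packages consistency and the generator identity together but leans on the transpose-coefficient duality. Both routes are sound.
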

\begin{proof}
For $p \geq 2$ all follows from Propositions~\ref{pLpR203}, \ref{pLpR205}
and \ref{pLpR211}.
The dual of the operator $\varsigma A_2$ on $\LL_\varsigma^2$ is given 
by $\varsigma A^\#$, where $A^\#$ is the operator obtained with coefficient matrix 
equal to the transpose of the matrix $\mu$.
Hence by Proposition~\ref{pLpR211} the dual semigroup $(S^{(\varsigma,2)})^*$ 
is submarkovian and extends 
consistently to a strongly continuous semigroup on $\LL_\varsigma^p$
for all $p \in [2,\infty)$.
Then by duality the semigroup $S^{(\varsigma,2)}$ extends 
consistently to a strongly continuous semigroup on $\LL_\varsigma^p$
for all $p \in [1,2]$.
\end{proof}

\subsection{Consequences for the operators $\varsigma A_p$ on $\LL^p$.}

We have the following abstract properties for $\varsigma A_p$. 

\begin{theorem} \label{t-imagin}
Let $p \in (1,\infty)$.
Then one has the following.
\begin{statements}
\item  \label{t-imagin-1} 
The operator $\varsigma A_p$ admits a bounded holomorphic
 functional calculus on $\mathbb L^p$, with angle strictly smaller than $\frac{\pi}{2}$.
 In particular, it admits bounded imaginary powers.
\item\label{t-imagin-2} For all $\theta \in (0,1)$ one has
\[
(\varsigma A_p+1)^{-\theta}
=\frac {\sin \pi \theta}{\pi} \int_0^\infty t^{-\theta} (\varsigma A_p+1+t)^{-1} \, dt.
\]
\item  \label{t-imagin-3} If $\theta\in (0,1]$, then
$\dom \bigl ((\varsigma A_p)^\theta\bigr ) = [\mathbb L^p, \dom (\varsigma A_p)]_\theta =\dom(A_p^\theta)$, where $[\cdot,\cdot]_\theta$ denotes complex interpolation.
\end{statements}
\end{theorem}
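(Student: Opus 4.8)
The plan is to derive all three statements of Theorem~\ref{t-imagin} from the structural facts already assembled: $-\varsigma A_p$ generates a bounded (indeed contraction) $C_0$-semigroup on $\mathbb{L}^p$ for $p\in(1,\infty)$ by Theorem~\ref{t-mult03-new}, and for $p=2$ this semigroup is holomorphic and contractive in the sector of semi-angle $\arctan\frac{\upp{\mu}}{\low{\mu}}$. First I would record that $\varsigma A_p$ is sectorial of angle $<\frac{\pi}{2}$ on $\mathbb{L}^p$ for \emph{every} $p\in(1,\infty)$: for $p=2$ this is immediate from the sector of contractivity, and for general $p$ one uses that the contraction semigroup on $\mathbb{L}^p$ is obtained by interpolation/duality from the submarkovian semigroup on $\mathbb{L}^2$ (as in Corollary~\ref{cLpR209} and Theorem~\ref{t-mult03-new}), so Stein interpolation upgrades the analyticity sector: the semigroup $S^{(\varsigma,p)}$ is analytic and bounded on a sector whose half-angle is at least $\frac{\pi}{2}(1-|1-\frac{2}{p}|)\cdot$(something positive), in any case $>0$, which makes $\varsigma A_p$ sectorial of angle $<\frac{\pi}{2}$. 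For statement~\ref{t-imagin-1} I would then invoke \cite[Proposition~2.2]{LeMX} (already flagged in the introduction): a sectorial operator that generates a bounded analytic semigroup which is \emph{contractive} on some sector on an $L^p$-space ($1<p<\infty$) admits a bounded $H^\infty$-functional calculus with angle strictly less than $\frac{\pi}{2}$; here the required contractivity on a sector for $p\neq 2$ again follows from the $p=2$ sector contractivity by the Coifman--Weiss/Stein transference for diffusion-type (submarkovian) semigroups. Bounded imaginary powers are then an immediate corollary of the bounded $H^\infty$-calculus, since $z\mapsto z^{it}$ is a bounded holomorphic function on every sector.

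Statement~\ref{t-imagin-2} is the classical Balakrishnan representation of fractional powers. Since $\varsigma A_p+1$ is invertible and sectorial of angle $<\frac{\pi}{2}$ (the spectrum lies in a proper subsector of the right half-plane, bounded away from $0$), the standard formula
\[
(\varsigma A_p+1)^{-\theta}=\frac{\sin\pi\theta}{\pi}\int_0^\infty t^{-\theta}(\varsigma A_p+1+t)^{-1}\,dt,\qquad \theta\in(0,1),
\]
holds by the general theory of fractional powers of sectorial operators; I would simply cite a standard reference (e.g.\ Triebel, or Pazy, or Haase's book on functional calculus) and note that the integral converges absolutely in $\mathcal{L}(\mathbb{L}^p)$ because $\|(\varsigma A_p+1+t)^{-1}\|\le (1+t)^{-1}$ by dissipativity for large $t$ and is bounded near $t=0$.

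For statement~\ref{t-imagin-3}, the identity $\dom((\varsigma A_p)^\theta)=[\mathbb{L}^p,\dom(\varsigma A_p)]_\theta$ for $\theta\in(0,1)$ is a general consequence of the bounded $H^\infty$-calculus: an operator with bounded imaginary powers (BIP) on a UMD space satisfies $\dom(B^\theta)=[X,\dom(B)]_\theta$ by the Dore--Venni / Yagi theorem, and $\mathbb{L}^p=L^p(\Omega\cup\Gamma;dx+d\rho)$ is UMD for $1<p<\infty$ as an $L^p$-space over a measure space; for $\theta=1$ the identity is trivial. Finally, $\dom((\varsigma A_p)^\theta)=\dom(A_p^\theta)$ because $\dom(\varsigma A_p)=\dom(A_p)$ as sets (multiplication by the bounded, boundedly invertible function $\varsigma$ does not change the domain — this is exactly $D(\tau T)=D(T)$ from Proposition~\ref{pLpR211}), so the two operators $\varsigma A_p+1$ and $A_p+1$ have the same domain and comparable graph norms, and a standard Heinz--Kato type argument (or, again, BIP plus interpolation) gives $\dom((\varsigma A_p)^\theta)=\dom(A_p^\theta)$ with equivalent norms for all $\theta\in(0,1]$. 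The main obstacle is the first step — verifying the sectorial-contractivity hypothesis of \cite[Proposition~2.2]{LeMX} for $p\neq 2$, i.e.\ that the semigroup is not merely bounded analytic but contractive on a sector of $\mathbb{L}^p$; this needs the interpolation of the $p=2$ sector-contractivity through the submarkovian structure, and is where one must be slightly careful about which references apply on the non-standard measure space $\Omega\cup\Gamma$.
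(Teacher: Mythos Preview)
Your overall plan is right, but there is a genuine gap in your treatment of part~\ref{t-imagin-1}: you have misread the hypothesis of \cite[Proposition~2.2]{LeMX}. That result does \emph{not} require the semigroup to be contractive on a \emph{sector} of $\mathbb{L}^p$. What it requires is that the semigroup be bounded analytic on $\mathbb{L}^p$ and that each individual operator $S^{(\varsigma,p)}_t$, for real $t>0$, be \emph{contractively regular}. For a positive operator on an $L^p$-space the regular norm coincides with the operator norm, so a positive contraction is automatically contractively regular. Both ingredients are already available: Theorem~\ref{t-mult03-new} gives that $S^{(\varsigma,p)}$ is a positive contraction semigroup on $\mathbb{L}^p_\varsigma$, and bounded analyticity on $\mathbb{L}^p_\varsigma$ follows from the analyticity on $\mathbb{L}^2_\varsigma$ by interpolation with the contraction semigroups on $\mathbb{L}^1_\varsigma$ and $\mathbb{L}^\infty_\varsigma$ (this is \cite[Proposition~3.12]{Ouh5} and duality). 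One then transfers the $H^\infty$-calculus from $\mathbb{L}^p_\varsigma$ to $\mathbb{L}^p$ by the equivalence of norms. Your attempted route --- proving sector-contractivity on $\mathbb{L}^p$ for $p\neq 2$ via Stein interpolation or Coifman--Weiss transference --- is not only unnecessary but also problematic: Stein interpolation transfers \emph{boundedness} on a (shrunken) sector, not contractivity, and you yourself flag this as ``the main obstacle''. It is not an obstacle at all once one reads the hypothesis correctly.

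Parts~\ref{t-imagin-2} and~\ref{t-imagin-3} are essentially as in the paper. For \ref{t-imagin-3} the paper argues slightly more directly than your Heinz--Kato suggestion: since $\dom(\varsigma A_p)=\dom(A_p)$, BIP for $\varsigma A_p$ gives $\dom((\varsigma A_p)^\theta)=[\mathbb{L}^p,\dom(\varsigma A_p)]_\theta=[\mathbb{L}^p,\dom(A_p)]_\theta$, and applying the same BIP-interpolation identity to $A_p$ itself (the case $\varsigma=1$) yields $[\mathbb{L}^p,\dom(A_p)]_\theta=\dom(A_p^\theta)$.
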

\begin{proof}
`\ref{t-imagin-1}'.
For all $p \in [1,\infty)$ denote by $S^{(\varsigma,p)}$ the contraction semigroup on 
$\LL_\varsigma^p$ generated by $- \varsigma A_p$.
Then the semigroups $S^{(\varsigma,p)}$ with $p \in [1,\infty)$ are consistent.
Moreover, $S^{(\varsigma,2)}$ is holomorphic and bounded on a sector.
Let $p \in (1,\infty)$.
Then it follows from \cite[Proposition 3.12]{Ouh5} and duality  that 
$S^{(\varsigma,p)}$ is holomorphic and bounded in a sector (which depends on $p$).
Also $S^{(\varsigma,p)}$ is a positive contraction semigroup.
Hence the operator $S^{(\varsigma,p)}_t$ is contractively regular for all $t > 0$.
So by \cite[Proposition 2.2]{LeMX} the operator 
$\varsigma A_p$ admits a bounded holomorphic
functional calculus on $\mathbb L_\varsigma^p$, with angle strictly
 smaller than $\frac{\pi}{2}$.
This is then also the case on $\LL^p$, since $\mathbb L^p = \mathbb L_\varsigma ^p$ as 
vector spaces, with equivalent norms.

`\ref{t-imagin-2}'.
For the integral representation see \cite[(4.41)]{Lun}.

`\ref{t-imagin-3}'.
 Since $\varsigma A_p$ admits bounded imaginary powers, it follows from \cite[Theorem~4.17]{Lun} that 
\[
\dom \bigl ((\varsigma A_p)^\theta\bigr ) = [\mathbb L^p, \dom(\varsigma A_p)]_\theta
 .  \]
Since $\dom(\varsigma A_p) = \dom( A_p)$, one has 
$\dom \bigl ((\varsigma A_p)^\theta\bigr ) = [\mathbb L^p, \dom(A_p)]_\theta$, 
and the result follows.
\end{proof}

\section{Linear parabolic equations}
\label{s-parabolic}
In this section we will draw conclusions for linear parabolic equations,
which, in particular, allow to give (\ref{e-parabol})--(\ref{e-initial})
a precise meaning and afterwards to solve it. 

In the following, $J=(0,T)$ denotes a bounded interval and $X$ a Banach space.
Throughout 
 we fix the numbers 
\[
1<s<\infty
\qquad \mbox{and} \qquad \frac{1}{s} < \alpha \leq 1.
\] 
We introduce the weighted space
\[
L_\alpha^s(J; X) = \{u \colon J\to X \;:\; [t\mapsto t^{1-\alpha} u(t)] \in L^s(J;X)\},
\]
and the corresponding weighted Sobolev space
\[
W_\alpha^{1,s}(J;X) = \{u \in L_\alpha^s(J; X) :  u'\in L_\alpha^s(J; X)\},
\]
where here and below the time derivative is understood in the sense 
of $X$-valued distributions
 (see \cite[Subsection~III.1.1]{Ama2}).
These are Banach spaces when equipped with their canonical norm, respectively.
Note that $\alpha = 1$ corresponds to 
the unweighted case, i.e., $L_1^s = L^s$.
By \cite[Lemma~2.1]{PrSi} one has
$W_\alpha^{1,s}(J; X) \subset W^{1,1}(J; X)$, which implies that
 each element of $W_\alpha^{1,s}(J;X)$ has a well-defined trace at $t=0$. 

\begin{definition} \label{d-maxreg}
Let $A$ be a closed linear operator on 
$X$ with dense domain $\dom(A)$.
We say that $ A$ has \emph {maximal parabolic
 $L_\alpha^s(J; X)$-regularity},  if for all $f\in L_\alpha^s(J; X)$ there is a unique
 solution $u \in W_\alpha^{1,s}(J; X) \cap L_\alpha^s(J; \dom(A))$ of 
\[
u' +  Au = f,\qquad u(0) = 0 .
\]
We write $M\!R_\alpha^s(J;X)$ for the class of all operators on $X$ with this property.
\end{definition}

We proceed with some comments concerning maximal parabolic regularity.

\begin{enumerate} 
\item 
It is shown in \cite[Theorem 2.4]{PrSi} that $A\in M\!R_1^s(J;X)$ if and only if 
$A\in M\!R_\alpha^s(J;X)$ for all $\alpha \in (\frac{1}{s}, 1]$, i.e., maximal 
parabolic $L_\alpha^s$-regularity is independent of the weight.
(In fact, in \cite{PrSi} 
only the case $J=(0,\infty)$ is treated, but the arguments given there also apply to bounded $J$.) 
In this sense it is natural to consider the temporal weights in the context 
of parabolic problems.
\item 
If $A\in M\!R_1^{s_0}(J_0;X)$ for an interval $J_0 = (0,T_0)$, where 
$T_0 \in (0,\infty)$ and $s_0 \in (1,\infty)$, then 
$A\in M\!R_\alpha^{s}(J;X)$ for all $T \in (0,\infty)$, $s \in (1,\infty)$ and 
$\alpha \in (\frac{1}{s}, 1]$.
This is shown in \cite[Corollary~5.4 and Theorem~7.1]{dor}.
 In this spirit, we then simply say that
$A$ satisfies maximal parabolic regularity on $X$.
\item 
The notion `maximal parabolic regularity' does not depend on the concrete
norm of the Banach space. 
In other words: an operator $A$, satisfying maximal
parabolic regularity on $X$, remains to satisfy maximal
parabolic regularity if $X$
is equipped with an equivalent norm. 
\item 
If $A$ satisfies maximal parabolic regularity on $X$, then $-A$ generates
 an analytic $C_0$-semigroup (cf.\ \cite[Corollary~4.4]{dor}).
If $X$ is a Hilbert space, then the 
converse is also true, cf.\ \cite{DeS}.
\end{enumerate}

For the case of nontrivial initial values, the following has been proved in  
\cite[Theorem~3.2]{PrSi}.
We denote by $(\cdot,\cdot)_{\theta,s}$ the real interpolation functor, 
cf.\ \cite[Sections~1.3 and 1.6]{Tri}.

\begin{proposition} \label{ps}
Suppose that $A$ satisfies maximal parabolic regularity on $X$.
Then for all $f\in L_\alpha^s(J;X)$ 
and $u_0\in (X,\dom(A))_{\alpha-\frac{1}{s},s}$ the Cauchy problem
\[
u' +  Au = f,\qquad u(0) = u_0,
\]
has a unique solution $u\in W_\alpha^{1,s}(J; X) \cap L_\alpha^s(J; \dom(A))$, and the estimate
\begin{equation}\label{cont-dep}
\|u'\|_{L_\alpha^s(J; X)} +\|u\|_{L_\alpha^s(J; \dom(A))}
\le c\big( \|u_0\|_{(X,\dom(A))_{\alpha-\frac{1}{s},s}} + \|f\|_{L_\alpha^s(J; X)}\big)
\end{equation}
is valid for some constant $c$, independent of $f$ and $u_0$.
\end{proposition}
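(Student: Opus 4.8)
The plan is to split off the initial value and reduce to the zero-initial-data case covered by the maximal regularity hypothesis. First I would recall that, since $A$ satisfies maximal parabolic regularity, $-A$ generates an analytic $C_0$-semigroup $(e^{-tA})_{t\geq0}$ on $X$ (see \cite[Corollary~4.4]{dor}). For uniqueness: if $u_1,u_2\in W_\alpha^{1,s}(J;X)\cap L_\alpha^s(J;\dom(A))$ both solve the Cauchy problem, then $v:=u_1-u_2$ lies in the same space and solves $v'+Av=0$, $v(0)=0$; since $A\in M\!R_\alpha^s(J;X)$ and the zero function solves this problem, Definition~\ref{d-maxreg} forces $v=0$.

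For existence I would proceed as follows. Let $v\in W_\alpha^{1,s}(J;X)\cap L_\alpha^s(J;\dom(A))$ be the unique solution of $v'+Av=f$, $v(0)=0$ provided by the hypothesis; the closed graph theorem makes the solution operator $f\mapsto v$ bounded into $W_\alpha^{1,s}(J;X)\cap L_\alpha^s(J;\dom(A))$, so $\|v\|\le c\|f\|_{L_\alpha^s(J;X)}$. Set $w(t):=e^{-tA}u_0$. The main point to establish is the membership
\[
w\in W_\alpha^{1,s}(J;X)\cap L_\alpha^s(J;\dom(A)),\qquad \|w\|\le c\,\|u_0\|_{(X,\dom(A))_{\alpha-\frac1s,s}}.
\]
Granting this, $u:=v+w$ satisfies $u'+Au=f$; moreover $w\in W_\alpha^{1,s}(J;X)\subset W^{1,1}(J;X)$ by \cite[Lemma~2.1]{PrSi}, so its trace at $0$ is well defined and equals $\lim_{t\downarrow0}e^{-tA}u_0=u_0$ by strong continuity, whence $u(0)=u_0$. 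Then \eqref{cont-dep} follows from $\|u\|\le\|v\|+\|w\|$.

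To prove the membership of $w$ I would use the analyticity estimates $\|e^{-tA}\|_{\mathcal L(X)}\le M$ and $\|Ae^{-tA}x\|_X\le Ct^{-1}\|x\|_X$, valid for $t\in(0,T]$. Since $t^{(1-\alpha)s}$ is integrable near $0$ (as $\alpha\le1<1+\frac1s$), the map $t\mapsto t^{1-\alpha}w(t)$ lies in $L^s(J;X)$. As $w'=-Ae^{-tA}u_0=-Aw$, what remains is to control $\int_0^T t^{(1-\alpha)s}\|Ae^{-tA}u_0\|_X^s\,dt$. Here is the key computation: with $\beta:=\alpha-\frac1s\in(0,1)$ one has $(1-\alpha)s=(1-\beta)s-1$, so this integral equals $\int_0^T\bigl(t^{1-\beta}\|Ae^{-tA}u_0\|_X\bigr)^s\,\frac{dt}{t}$, which is finite and comparable to $\|u_0\|_{(X,\dom(A))_{\beta,s}}^s$ exactly because $u_0\in(X,\dom(A))_{\beta,s}$ — the part near $t=T$ being harmless since $\|Ae^{-tA}u_0\|\le C\|u_0\|$ there (cf.\ \cite[Sections~1.3 and 1.6]{Tri}). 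Together with $(X,\dom(A))_{\beta,s}\hookrightarrow X$ this yields both the membership and the norm bound.

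The step I expect to be the main obstacle is precisely this last identification: recognising that the weighted integrability of $t\mapsto t^{1-\alpha}Ae^{-tA}u_0$ is the Lions--Peetre description of $(X,\dom(A))_{\alpha-\frac1s,s}$ — i.e.\ getting the exponent shift $\beta=\alpha-\frac1s$ right and checking the behaviour of the integrals at both endpoints $t=0$ and $t=T$. Everything else (the splitting $u=v+w$, the trace at $0$, the closed graph argument for continuous dependence) is routine bookkeeping. I note that this is exactly \cite[Theorem~3.2]{PrSi}.
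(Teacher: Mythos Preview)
Your argument is correct. The paper itself does not give a proof of this proposition at all: it simply states that the result is \cite[Theorem~3.2]{PrSi}, which you also observe at the end of your proposal. What you have written is essentially the standard proof of that theorem --- split $u=v+w$ with $v$ the zero-initial-data solution and $w(t)=e^{-tA}u_0$, then use the semigroup characterisation of $(X,\dom(A))_{\alpha-\frac1s,s}$ to place $w$ in the maximal regularity space. Your exponent computation $(1-\alpha)s=(1-\beta)s-1$ with $\beta=\alpha-\frac1s$ is exactly the bookkeeping that links the temporal weight to the real-interpolation index, and the endpoint estimates are handled correctly. There is nothing to compare here beyond noting that you have supplied the argument the paper outsources.
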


By working in temporally weighted spaces one can thus reduce the regularity of the 
initial values $u_0$ almost up to the base space $X$.

We have the following embeddings for the weighted maximal regularity class.
The space of $\gamma$-H\"older continuous functions is denoted by $C^\gamma$.

\begin{proposition}\label{extra-reg} If $A$ satisfies maximal parabolic regularity on $X$, then
\[
W_\alpha^{1,s}(J; X) \cap L_\alpha^s(J; \dom(A)) 
\subset
BU\!C(\overline{J}; (X, \dom(A))_{\alpha-\frac{1}{s},s}) \cap C(J; (X, \dom(A))_{1-\frac{1}{s},s}).
\]
Moreover, for every $\theta \in [0,\alpha-\frac {1}{s})$ there is a $\gamma \in (0,1)$ such
that 
\[
W_\alpha^{1,s}(J; X) \cap L_\alpha^s(J; \dom(A)) \subset C^\gamma(\overline{J};[X,\dom(A)]_\theta).
\]
\end{proposition}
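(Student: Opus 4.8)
The plan is to reduce everything to known abstract embedding theorems for the weighted maximal regularity class, using the identification of fractional power domains provided by Theorem~\ref{t-imagin}. First I would recall that maximal parabolic regularity does not depend on the equivalent norm (comment (3) above), so I may work in the space $\LL^p_\varsigma$ where $-\varsigma A_p$ generates a contraction semigroup; in particular I may assume $-A$ generates an analytic $C_0$-semigroup with $0$ in the resolvent set (after a harmless shift, replacing $A$ by $A+1$, which does not affect membership in any of the spaces on the right since $\dom(A)$ is unchanged and the time interval $J$ is bounded). The first embedding, namely $W_\alpha^{1,s}(J;X)\cap L_\alpha^s(J;\dom(A))\subset BU\!C(\overline J;(X,\dom(A))_{\alpha-\frac1s,s})$ together with $C(J;(X,\dom(A))_{1-\frac1s,s})$, is exactly the trace theorem for weighted spaces from \cite[Theorem~3.2 and its proof]{PrSi}: the trace at $t=0$ lands in $(X,\dom(A))_{\alpha-\frac1s,s}$, continuity up to $t=0$ in that space is the $BU\!C$ part, and on the open interval the weight $t^{1-\alpha}$ is locally bounded away from $0$ so one recovers the unweighted trace space $(X,\dom(A))_{1-\frac1s,s}$ on compact subintervals of $J$, which combined with the global information gives $C(J;(X,\dom(A))_{1-\frac1s,s})$.

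For the H\"older embedding I would argue as follows. Fix $\theta\in[0,\alpha-\frac1s)$. The point is that $u\in E:=W_\alpha^{1,s}(J;X)\cap L_\alpha^s(J;\dom(A))$ has, for $A$ with maximal regularity, the extra mixed-derivative regularity: $A^{1-\eta}u\in W_\alpha^{\eta,s}(J;X)$ for appropriate $\eta$, equivalently $u$ has $\eta$ fractional time-derivatives with values in $\dom(A^{1-\eta})=[X,\dom(A)]_{1-\eta}$ (using Theorem~\ref{t-imagin}\ref{t-imagin-3} to identify the complex interpolation space with the fractional power domain — this is where the bounded imaginary powers are essential). Then one invokes the weighted Sobolev embedding in time: $W_\alpha^{1,s}(J;X)$ embeds into $C^{\gamma}(\overline J;X)$ for $\gamma<\alpha-\frac1s$, and more generally a fractional-order mixed-regularity space embeds into $C^\gamma(\overline J;Y)$ for an intermediate space $Y$. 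Concretely, I would use the continuous embedding $E\hookrightarrow W_\alpha^{1,s}(J;[X,\dom(A)]_0)\cap L_\alpha^s(J;[X,\dom(A)]_1)$ and interpolate: for $\theta\in[0,1)$ the real or complex interpolation of these two, combined with the weighted version of the Sobolev embedding $W_\alpha^{1-\theta,s}(J;X)\hookrightarrow C^\gamma(\overline J;X)$ valid as soon as $1-\theta-\frac1s>\gamma>0$, yields $E\hookrightarrow C^\gamma(\overline J;[X,\dom(A)]_\theta)$ with $\gamma$ any number in $(0,\alpha-\frac1s-\theta)$, which is a nonempty interval precisely because $\theta<\alpha-\frac1s$.

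The cleanest route for the second embedding is probably to cite the weighted mixed-derivative/trace results directly: \cite[Proposition~3.2 or Theorem~3.2]{PrSi} and the theory in \cite{dor} give, for $A$ with maximal $L_\alpha^s$-regularity, the characterization $E=W_\alpha^{1,s}(J;X)\cap L_\alpha^s(J;\dom(A))\hookrightarrow H_\alpha^{s,1-\theta}(J;[X,\dom(A)]_\theta)$ for the weighted Bessel-potential scale (or a Sobolev-scale analogue), and then the embedding of the latter into $C^\gamma(\overline J;[X,\dom(A)]_\theta)$ for $\gamma<(1-\theta)-\frac1s-(1-\alpha)=\alpha-\frac1s-\theta$ is the standard scalar-weight one-dimensional embedding, transplanted to $[X,\dom(A)]_\theta$-valued functions by the fact that a Banach-space-valued Sobolev function on an interval obeys the same embedding as in the scalar case. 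I expect the main obstacle to be purely bookkeeping: making sure the weight exponent $1-\alpha$ is tracked correctly through the interpolation so that the final H\"older exponent is $\gamma<\alpha-\frac1s-\theta$ rather than something worse, and ensuring that $[X,\dom(A)]_\theta$ (complex interpolation) is the correct target — which is legitimate here only because Theorem~\ref{t-imagin}\ref{t-imagin-3} identifies it with $\dom((\varsigma A_p)^\theta)$, so $A$ restricts to a bounded invertible operator on the relevant scale and the mixed-derivative estimate $\|A^{1-\theta}u\|_{L_\alpha^s(J;X)}+\|u\|_{W_\alpha^{1-\theta,s}(J;X)}\le c\|u\|_E$ genuinely holds.
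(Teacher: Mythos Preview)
Your handling of the first inclusion is fine and matches the paper, which simply cites \cite[Proposition~3.1]{PrSi}.

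For the second inclusion there is a genuine gap in scope. The proposition is stated for an \emph{arbitrary} closed operator $A$ with maximal parabolic regularity on an arbitrary Banach space $X$; no bounded imaginary powers are assumed, and Theorem~\ref{t-imagin} is simply not available here. You have silently specialised to $A=\varsigma A_p$ on $X=\LL^p$, and your entire argument for the H\"older embedding rests on the identification $\dom(A^\theta)=[X,\dom(A)]_\theta$ via Theorem~\ref{t-imagin}\ref{t-imagin-3}, together with mixed-derivative estimates that likewise presuppose such a functional calculus. This proves only a special case (sufficient, admittedly, for the later applications in the paper), not the proposition as written.

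The approach the paper has in mind (following \cite[Lemma~1]{DMRT}) is both simpler and fully general. From the first inclusion one already has $u\in BU\!C(\overline J;(X,\dom A)_{\alpha-\frac1s,s})$; for any $\eta\in(\theta,\alpha-\tfrac1s)$ this gives $\sup_t\|u(t)\|_{[X,\dom A]_\eta}<\infty$ via the elementary chain $(X,\dom A)_{\alpha-\frac1s,s}\subset(X,\dom A)_{\eta,1}\subset[X,\dom A]_\eta$. Separately, a direct H\"older estimate using $\tau^{1-\alpha}u'\in L^s(J;X)$ and the subadditivity of $t\mapsto t^\beta$ for $\beta=\frac{\alpha s-1}{s-1}\in(0,1]$ shows that $W_\alpha^{1,s}(J;X)\hookrightarrow C^{\alpha-\frac1s}(\overline J;X)$. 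Combining the two via the log-convexity inequality
\[
\|v\|_{[X,\dom A]_\theta}\le C\,\|v\|_X^{\,1-\theta/\eta}\,\|v\|_{[X,\dom A]_\eta}^{\,\theta/\eta},
\]
applied to $v=u(t_1)-u(t_2)$, yields the H\"older embedding with exponent $\gamma=(\alpha-\tfrac1s)(1-\tfrac\theta\eta)>0$. No fractional powers of $A$, no BIP, and no mixed-derivative theorem are needed.
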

\begin{proof} The first inclusion is shown in \cite[Proposition~3.1]{PrSi}.
The second one can be proved along the lines of \cite[Lemma~1]{DMRT}.
\end{proof}

We apply a classical result of Lamberton \cite{Lamb} to the operators $\varsigma A_p$.

\begin{theorem} \label{t-qreg}
Let $\varsigma \colon \Omega \cup \Gamma \to (0,\infty)$ be a measurable 
function such that $\varsigma,\varsigma^{-1} \in \LL^\infty$. 
Then for all $p\in(1,\infty)$ 
the operator $\varsigma A_p$ satisfies maximal parabolic regularity on $\mathbb L^p$.
\end{theorem}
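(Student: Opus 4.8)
The plan is to deduce maximal parabolic regularity of $\varsigma A_p$ from Lamberton's theorem \cite{Lamb}, whose hypotheses are: (i) $-A_2$ generates a submarkovian semigroup on $L^2$ of a $\sigma$-finite measure space (so that it extends consistently and contractively to all $L^q$, $q\in[1,\infty]$, and in particular is $L^q$-holomorphic for $q\in(1,\infty)$); and (ii) one obtains maximal parabolic regularity on $L^q$ for all $q\in(1,\infty)$. Thus the first step is to verify that $\varsigma A_2$ on the renormed space $\mathbb L^2_\varsigma = L^2(\Omega\cup\Gamma;\varsigma^{-1}(dx+d\rho))$ fits this framework. By Theorem~\ref{t-mult03-new} the operator $-\varsigma A_2$ generates a strongly continuous, positive, holomorphic contraction semigroup on $\mathbb L^2_\varsigma$, and by the submarkovian part (which is contained in the proof of Proposition~\ref{pLpR211}\ref{pLpR211-5} applied with $\tau = \varsigma$, using that the underlying $S$ is submarkovian by Proposition~\ref{pLpR205}) this semigroup is submarkovian. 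Hence Lamberton's theorem applies on the measure space $(\Omega\cup\Gamma,\varsigma^{-1}(dx+d\rho))$ and yields that $\varsigma A_q$ (the generator of the extension $S^{(\varsigma,q)}$ on $\mathbb L^q_\varsigma$) satisfies maximal parabolic regularity on $\mathbb L^q_\varsigma$ for every $q\in(1,\infty)$.

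\textbf{Second step: identifying the generators and transferring the norm.} It remains to check that the operator produced by Lamberton's construction really is $\varsigma A_p$ on $\mathbb L^p$, rather than some a priori different object. By the last assertion of Proposition~\ref{pLpR211}\ref{pLpR211-5}, for $q\in[2,\infty)$ the generator of $S^{(\varsigma,q)}$ equals $\varsigma A_q$, where $A_q$ is the generator of $S^{(q)}$ from Corollary~\ref{cLpR209}; the range $q\in(1,2)$ is handled by the duality argument already used in the proof of Theorem~\ref{t-mult03-new}, noting that the dual of $\varsigma A_q$ on $\mathbb L^q_\varsigma$ is $\varsigma A^\#_{q'}$ with $A^\#$ built from $\mu^{\mathrm T}$, which satisfies Assumption~\ref{a-coeff} as well, so Lamberton's theorem and Proposition~\ref{pLpR211} apply equally to it. Finally, $\mathbb L^p$ and $\mathbb L^p_\varsigma$ coincide as vector spaces with equivalent norms (since $\varsigma,\varsigma^{-1}\in\mathbb L^\infty$), and by comment~(3) following Definition~\ref{d-maxreg} maximal parabolic regularity is insensitive to passing to an equivalent norm. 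Therefore $\varsigma A_p$ satisfies maximal parabolic regularity on $\mathbb L^p$ for all $p\in(1,\infty)$.

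\textbf{Main obstacle.} The only genuinely nontrivial point is verifying the hypotheses of Lamberton's theorem in our setting: namely, that the multiplicatively perturbed semigroup $S^{(\varsigma,2)}$ is not merely a holomorphic contraction semigroup but is \emph{submarkovian} and extends consistently to all $\mathbb L^q_\varsigma$. This is exactly what Proposition~\ref{pLpR211} was designed to provide — the "surprisingly simple trick" of rewriting $\varsigma A_p$ as a generator with respect to the reweighted measure $\varsigma^{-1}(dx+d\rho)$, under which contractivity, positivity, invariance of order intervals, and the submarkovian property are all preserved, precisely because the relevant orthogonal projections onto the convex sets $\{f:|f|\le 1\}$ etc.\ are the same for both the original and the reweighted $L^2$ inner products. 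Once this is in hand the rest is bookkeeping: applying a black-box theorem, matching up generators via Proposition~\ref{pLpR211}\ref{pLpR211-5}, and invoking norm-equivalence.
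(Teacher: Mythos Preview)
Your proof is correct and follows essentially the same route as the paper: invoke Theorem~\ref{t-mult03-new} to see that $-\varsigma A_2$ generates on $\mathbb L^2_\varsigma$ a bounded holomorphic semigroup extending consistently to contractions on all $\mathbb L^q_\varsigma$, then apply Lamberton's result \cite[Corollary~1.1]{Lamb}, and finally transfer back to $\mathbb L^p$ via norm equivalence. You have simply unpacked more of the infrastructure (submarkovian property, identification of generators for $q\neq 2$, norm insensitivity of maximal regularity) that the paper's two-line proof leaves implicit; note in particular that the identification of the generator as $\varsigma A_p$ for all $p\in[1,\infty)$ is already part of the statement of Theorem~\ref{t-mult03-new}, so your ``second step'' is not strictly needed as a separate argument.
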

\begin{proof}
Theorem~\ref{t-mult03-new} states that the semigroup generated by $-\varsigma A_2$ on 
$\mathbb L^2_\varsigma$ is bounded and analytic, and that it 
extents consistently to a contractive semigroup on $\mathbb L^q_\varsigma$ 
for all $q\in [1,\infty]$.
Now the result is a consequence of \cite[Corollary.~1.1]{Lamb}.
\end{proof}

In order to include lower order terms into the boundary and interface conditions we need some preparation.

\begin{proposition} \label{pLpR301}
Let $p \in (1,\infty)$ and $\theta \in (0,1)$ be such that 
$d-1 < \theta \, p$. 
Then one has
$\dom\bigl( (\varsigma A_p)^\theta \bigl) \subset \LL^\infty$.
\end{proposition}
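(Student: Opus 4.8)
The plan is to deduce the embedding $\dom((\varsigma A_p)^\theta) \subset \LL^\infty$ from the ultracontractivity estimates of Lemma~\ref{lLpR206} via the integral representation of fractional powers. By Theorem~\ref{t-imagin}\ref{t-imagin-3} one has $\dom((\varsigma A_p)^\theta) = \dom(A_p^\theta)$, so it suffices to prove $\dom(A_p^\theta) \subset \LL^\infty$; equivalently, that $(A_p+1)^{-\theta}$ maps $\LL^p$ into $\LL^\infty$. First I would note that $-A_p$ generates the contraction semigroup $S = S^{(p)}$ on $\LL^p$, and that since $A_p$ is invertible-shifted (one works with $A_p+1$), the standard formula
\[
(A_p+1)^{-\theta} = \frac{1}{\Gamma(\theta)} \int_0^\infty t^{\theta-1} e^{-t} S_t \, dt
\]
holds as a bounded operator on $\LL^p$.

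The key step is to combine this representation with the ultracontractivity bound from Lemma~\ref{lLpR206}\ref{lLpR206-1}: for any $\beta > d-1$ and $\omega > 0$ there is $c>0$ with $\|S_t \varphi\|_{\LL^\infty} \leq c\, t^{-\beta/p} e^{\omega t} \|\varphi\|_{\LL^p}$ for all $t>0$, $\varphi \in \LL^p$ (taking $q=\infty$ in the statement). Since $d-1 < \theta p$, I can choose $\beta \in (d-1, \theta p)$, so that $\beta/p < \theta$. Then for $\varphi \in \LL^p$,
\[
\|(A_p+1)^{-\theta}\varphi\|_{\LL^\infty}
\leq \frac{c}{\Gamma(\theta)} \int_0^\infty t^{\theta-1-\beta/p} e^{-t}e^{\omega t} \|\varphi\|_{\LL^p} \, dt,
\]
and I would pick $\omega < 1$ (say $\omega = \tfrac12$) so the exponential factor $e^{-(1-\omega)t}$ controls the integral at infinity, while near $t=0$ the exponent $\theta - 1 - \beta/p > -1$ guarantees integrability. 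Hence the integral converges to a finite constant, giving $\|(A_p+1)^{-\theta}\varphi\|_{\LL^\infty} \leq C \|\varphi\|_{\LL^p}$, i.e.\ $(A_p+1)^{-\theta} \colon \LL^p \to \LL^\infty$ is bounded. Since every $u \in \dom(A_p^\theta) = \dom((A_p+1)^\theta)$ is of the form $u = (A_p+1)^{-\theta}\varphi$ for some $\varphi \in \LL^p$, we conclude $\dom((\varsigma A_p)^\theta) = \dom(A_p^\theta) \subset \LL^\infty$.

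I do not expect any serious obstacle here; the only points requiring a little care are the validity of the fractional-power integral formula for $(A_p+1)^{-\theta}$ on a general Banach space (which is classical for generators of bounded semigroups, and $S$ is even contractive, cf.\ the representation already quoted in Theorem~\ref{t-imagin}\ref{t-imagin-2}), the choice of $\beta$ strictly between $d-1$ and $\theta p$ (possible precisely because of the hypothesis $d-1 < \theta p$), and the splitting of the $t$-integral into $(0,1)$ and $(1,\infty)$ to check convergence at both ends, using $\theta - \beta/p > 0$ at the origin and the exponential decay $e^{-(1-\omega)t}$ at infinity. Finally, one invokes Theorem~\ref{t-imagin}\ref{t-imagin-3} to pass from $\dom(A_p^\theta)$ to $\dom((\varsigma A_p)^\theta)$, completing the proof.
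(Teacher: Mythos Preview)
Your proposal is correct and follows essentially the same route as the paper: reduce to $\dom(A_p^\theta)$ via Theorem~\ref{t-imagin}\ref{t-imagin-3}, use the integral representation $(A_p+1)^{-\theta} = \frac{1}{\Gamma(\theta)}\int_0^\infty t^{\theta-1}e^{-t}S_t\,dt$, and feed in the ultracontractivity estimate of Lemma~\ref{lLpR206}\ref{lLpR206-1}. The paper is simply more terse (it only writes ``the assertion follows from the estimate of Lemma~\ref{lLpR206}\ref{lLpR206-1}'' and cites \cite[Lemma~4.1.11]{Lun} for $\dom(A_p^\theta)=\dom((A_p+1)^\theta)$), while you spell out the choices of $\beta\in(d-1,\theta p)$ and $\omega<1$ explicitly.
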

\begin{proof} 
Since $\dom \bigl((\varsigma A_p)^\theta \bigl) = \dom \bigl( (A_p+1)^\theta  \bigl)$ by  
Theorem~\ref{t-imagin}\ref{t-imagin-3} and \cite[Lemma 4.1.11]{Lun}, it suffices to show that $(A_p +1)^{-\theta}$ 
maps $\LL^p$ into $\LL^\infty$. In \cite[Section 2.6]{Paz} it is shown that
\[
(A_p +1)^{-\theta} 
= \frac{1}{\Gamma(\theta)} \int_0^\infty t^{\theta-1} \, e^{-t} \, S_t \, dt
 .  \]
Now the assertion follows from the estimate of Lemma~\ref{lLpR206}\ref{lLpR206-1}.
\end{proof}

\begin{corollary} \label{c-gebropot}
Suppose $p \in (\frac{d-1}{\alpha - \frac{1}{s}}, \infty)$.
Then
$(\mathbb L^p,\dom (\varsigma A_p))_{\alpha-\frac {1}{s},s}$ continuously embeds into 
$ \mathbb L^\infty$.
\end{corollary}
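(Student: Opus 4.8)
The plan is to deduce the embedding from Proposition~\ref{pLpR301} by choosing an auxiliary exponent $\theta$ that sits between $\frac{d-1}{p}$ and $\alpha - \frac{1}{s}$, and then inserting the real interpolation space into the chain of embeddings. Concretely, since $p > \frac{d-1}{\alpha-\frac{1}{s}}$, we have $\frac{d-1}{p} < \alpha - \frac{1}{s} \le 1$, so there exists $\theta \in (0,1)$ with $\frac{d-1}{p} < \theta < \alpha - \frac{1}{s}$. For this $\theta$ the hypothesis $d - 1 < \theta p$ of Proposition~\ref{pLpR301} holds, hence $\dom\bigl((\varsigma A_p)^\theta\bigr) \subset \mathbb{L}^\infty$, and by the closed graph theorem this inclusion is continuous.

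Next I would identify $\dom\bigl((\varsigma A_p)^\theta\bigr)$ with a complex interpolation space. By Theorem~\ref{t-imagin}\ref{t-imagin-3} one has $\dom\bigl((\varsigma A_p)^\theta\bigr) = [\mathbb{L}^p, \dom(\varsigma A_p)]_\theta$, with equivalent norms. The standard comparison between the complex and real interpolation functors (together with the reiteration/ordering properties of real interpolation spaces, cf.\ \cite[Sections~1.3 and 1.6]{Tri}) gives a continuous embedding
\[
(\mathbb{L}^p, \dom(\varsigma A_p))_{\alpha-\frac{1}{s},s}
\hookrightarrow
[\mathbb{L}^p, \dom(\varsigma A_p)]_\theta
\]
since $\theta < \alpha - \frac{1}{s}$: indeed $(\mathbb{L}^p, \dom(\varsigma A_p))_{\alpha-\frac{1}{s},s}$ embeds into $(\mathbb{L}^p, \dom(\varsigma A_p))_{\theta',1}$ for any $\theta' \in (\theta, \alpha-\frac{1}{s})$, and the latter embeds continuously into $[\mathbb{L}^p, \dom(\varsigma A_p)]_\theta$ by the elementary inclusions between real and complex interpolation spaces of lower order. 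Composing this with the continuous inclusion $[\mathbb{L}^p, \dom(\varsigma A_p)]_\theta = \dom\bigl((\varsigma A_p)^\theta\bigr) \subset \mathbb{L}^\infty$ from the previous paragraph yields the claim.

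The only genuinely delicate point is the ordering step between the real and complex interpolation scales, i.e.\ making sure the indices line up so that the space at level $\alpha - \frac{1}{s}$ (real, exponent $s$) embeds into the space at level $\theta$ (complex); this is routine interpolation theory, but one must be careful that the strict inequality $\theta < \alpha - \frac{1}{s}$ is used — equality would generally fail. Everything else (existence of $\theta$, applicability of Proposition~\ref{pLpR301}, continuity via the closed graph theorem, and the identification in Theorem~\ref{t-imagin}\ref{t-imagin-3}) is immediate from the results already established.
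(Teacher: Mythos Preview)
Your proposal is correct and follows essentially the same route as the paper: choose $\theta \in (\frac{d-1}{p},\alpha-\frac{1}{s})$, apply Proposition~\ref{pLpR301}, and sandwich the real interpolation space into the complex one at level $\theta$ via the standard inclusions $(\cdot,\cdot)_{\alpha-\frac{1}{s},s}\subset(\cdot,\cdot)_{\theta,1}\subset[\cdot,\cdot]_\theta$ (the paper cites \cite[Propositions~1.1.3, 1.3.2 and Corollary~2.1.8]{Lun} here and uses $\theta$ directly rather than an auxiliary $\theta'$, but this is cosmetic).
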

\begin{proof}
Fix $\theta \in (\frac{d-1}{p}, \alpha-\frac {1}{s})$.
Then Proposition~\ref{pLpR301} yields 
$\dom\bigl (( \varsigma A_p)^\theta \bigr ) \subset  \mathbb L^\infty$.
But
\[
(\mathbb L^p,\dom(\varsigma A_p))_{\alpha-\frac {1}{s},s} \subset
 (\mathbb L^p,\dom(\varsigma A_p))_{\theta,1}
\subset [ \mathbb L^p, \dom(\varsigma A_p)]_\theta
\]
by \cite[Propositions 1.1.3, 1.3.2 and Corollary 2.1.8]{Lun}, and the latter interpolation space 
equals $\dom\bigl ((A_p)^\theta\bigr )$ by Theorem \ref{t-imagin}\ref{t-imagin-3}.
\end{proof}

\begin{definition} \label{d-emb}
Fix $b \in L^p(\Gamma \cup \SX;d\rho)$.
Define the operator 
$B \colon  \mathbb L^\infty \to \mathbb L^p $ by 
\[
B(f_\Omega,f_\partial) = (0, b \, f_\partial)
\]
for all $f = (f_\Omega,f_\partial) \in L^p(\Omega) \oplus L^p(\Gamma\cup \SX;d\rho) \cong \LL^p$.
\end{definition}

Note that $b$ is allowed to be complex valued.

\begin{theorem} \label{t-haupt}
Let $p \in (d-1,\infty)$. 
Then the operator $\varsigma (A_p+B)$ satisfies maximal parabolic regularity on $\mathbb L^p$.
\end{theorem}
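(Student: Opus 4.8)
The plan is to prove that $\varsigma(A_p+B)$ is a relatively bounded perturbation of $\varsigma A_p$ with relative bound zero, so that the perturbation theorem for maximal parabolic regularity (as in \cite{pru2} or \cite{dor}) applies on top of Theorem~\ref{t-qreg}. First I would observe that multiplication by $\varsigma$ is a topological isomorphism of $\LL^p$, so it suffices to control $\varsigma B$ relative to $\varsigma A_p$; since $\norm{\varsigma}_{\LL^\infty}$ and $\norm{\varsigma^{-1}}_{\LL^\infty}$ are finite, the relative bound of $\varsigma B$ with respect to $\varsigma A_p$ equals the relative bound of $B$ with respect to $A_p$ up to these fixed constants, so I can reduce to estimating $\norm{Bu}_{\LL^p}$ in terms of $\norm{u}_{\LL^p}$ and $\norm{\varsigma A_p u}_{\LL^p}$ for $u \in \dom(\varsigma A_p) = \dom(A_p)$.

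Next I would exploit the smoothing established earlier. Fix $\theta \in (\frac{d-1}{p},1)$; this is possible precisely because $p > d-1$. By Proposition~\ref{pLpR301}, $\dom\bigl((\varsigma A_p)^\theta\bigr) \subset \LL^\infty$, and the inclusion is continuous by the closed graph theorem. Hence there is a constant $c$ with $\norm{u}_{\LL^\infty} \le c\,\norm{(\varsigma A_p + 1)^\theta u}_{\LL^p}$ for all $u$ in the domain. Since $b \in L^p(\Gamma \cup \SX; d\rho)$, the operator $B \colon \LL^\infty \to \LL^p$ is bounded with $\norm{Bf}_{\LL^p} \le \norm{b}_{L^p(\Gamma\cup\SX;d\rho)}\,\norm{f}_{\LL^\infty}$. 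Combining, $\norm{Bu}_{\LL^p} \le c\,\norm{b}_{L^p}\,\norm{(\varsigma A_p+1)^\theta u}_{\LL^p}$ for all $u \in \dom(\varsigma A_p)$.

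The final step is the standard moment (interpolation) inequality for fractional powers of a sectorial operator: since $\varsigma A_p + 1$ is invertible and sectorial with a bounded $H^\infty$-calculus (Theorem~\ref{t-imagin}\ref{t-imagin-1}), for every $\theta \in (0,1)$ and every $\delta > 0$ there is a constant $c_\delta$ such that $\norm{(\varsigma A_p+1)^\theta u}_{\LL^p} \le \delta\,\norm{(\varsigma A_p+1)u}_{\LL^p} + c_\delta\,\norm{u}_{\LL^p}$ for all $u \in \dom(\varsigma A_p)$; see for instance \cite[Section~2.6]{Paz} or \cite{Lun}. Feeding this into the previous bound shows that $\varsigma B$ — equivalently $B$ up to the fixed factors $\norm{\varsigma}_{\LL^\infty}$, $\norm{\varsigma^{-1}}_{\LL^\infty}$ — is $(\varsigma A_p+1)$-bounded with relative bound zero, hence $(\varsigma A_p)$-bounded with relative bound zero. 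By the perturbation theorem for maximal parabolic regularity \cite[Theorem~1]{pru2}, $\varsigma A_p + \varsigma B = \varsigma(A_p + B)$ inherits maximal parabolic regularity on $\LL^p$ from $\varsigma A_p$, which has it by Theorem~\ref{t-qreg}.

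I do not expect a serious obstacle here; the only points requiring a little care are the bookkeeping that transfers relative boundedness through the isomorphism given by multiplication by $\varsigma$ (so that one may equally work with $A_p$ or $\varsigma A_p$), and making sure the moment inequality is quoted in a form valid for the invertible operator $\varsigma A_p + 1$ rather than $\varsigma A_p$ itself. Everything else is an assembly of results already proved in the excerpt: Theorem~\ref{t-qreg} for the unperturbed operator, Proposition~\ref{pLpR301} for the embedding of the fractional domain into $\LL^\infty$, and Theorem~\ref{t-imagin} for the functional calculus underlying the interpolation inequality.
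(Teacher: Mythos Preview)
Your argument is correct and essentially the same as the paper's: both show that $\varsigma B$ is bounded from an intermediate space between $\LL^p$ and $\dom(\varsigma A_p)$ into $\LL^p$ (you via the fractional domain $\dom((\varsigma A_p)^\theta)$ from Proposition~\ref{pLpR301}, the paper via the real interpolation space from Corollary~\ref{c-gebropot}), and then invoke a perturbation theorem for maximal regularity on top of Theorem~\ref{t-qreg}. The only cosmetic difference is that you unpack the mechanism through the moment inequality to reach relative bound zero before citing the perturbation result, whereas the paper appeals directly to \cite[Theorem~6.2]{dor}, whose hypothesis is precisely boundedness on an interpolation space.
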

\begin{proof}
One deduces from Corollary \ref{c-gebropot} that the operator
$\varsigma B$ acts continuously on an interpolation space
between $\dom(\varsigma A_p)$ and $\mathbb L^p$.
Then the result follows from the 
perturbation theorem \cite[Theorem~6.2]{dor}.
\end{proof}

\begin{rem} \label{r-expltime}
In a somewhat more general concept $B$ may also depend explicitly on time, see
\cite{ACFP}.
\end{rem}

Now we are in the position to solve the parabolic problem
(\ref{e-parabol})--(\ref{e-initial}) in terms of the realization of the operator $A_p$.

\begin{theorem} \label{t-solution} 
Let $T \in (0,\infty)$ and set $J=(0,T)$.
Let $ p \in (d-1,\infty)$, $s \in (1,\infty)$ and $\alpha \in (\frac{1}{s},1]$.
Let $\Omega$ be a bounded domain in $\R^d$ with $d >1$, let
$\Gamma$ be an open part of its boundary $\partial\Omega$ and $\SX \subset \Omega$.
Adopt the Assumptions~{\rm \ref{a-region}}, {\rm \ref{a-d-1}} and {\rm \ref{a-coeff}}.
Let $\varepsilon \in \mathbb L^\infty$  be a positive function with
a positive essential lower bound and let $b$ as in
 Definition~{\rm \ref{d-emb}}.
Then the initial value problem \eqref{e-parabol}--\eqref{e-initial} 
admits a solution in the following sense:
for all $f \in L_\alpha^s(J;\mathbb L^p)$ and $u_0 \in 
(\mathbb L^p,\dom(A_p))_{\alpha-\frac {1}{s},s}$ there is a unique function
 $u \in W_\alpha^{1,s}(J;\mathbb L^p) \cap L_\alpha^s(J;\dom(A_p))$ satisfying
\begin{equation} \label{e-euat}
\varepsilon u' +A_p u +Bu=f, \qquad u(0)=u_0.
\end{equation}
\end{theorem}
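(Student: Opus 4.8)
The plan is to reduce \eqref{e-euat} to the standard form treated by Proposition~\ref{ps}. First I would divide the equation by $\varepsilon$: writing $\varsigma = \varepsilon^{-1}$, we have $\varsigma \in \mathbb L^\infty$ with a positive essential lower bound (since $\varepsilon$ has a positive essential lower bound and is in $\mathbb L^\infty$), so $\varsigma, \varsigma^{-1} \in \mathbb L^\infty$ and the hypotheses on the multiplier from Subsection~\ref{s-mult} are met. Then \eqref{e-euat} is equivalent to
\[
u' + \varsigma(A_p + B) u = \varsigma f, \qquad u(0) = u_0.
\]
Since $p \in (d-1,\infty)$, Theorem~\ref{t-haupt} tells us that $\varsigma(A_p+B)$ satisfies maximal parabolic regularity on $\mathbb L^p$.

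Next I would check that the data are in the right classes for Proposition~\ref{ps}, applied with $A = \varsigma(A_p+B)$ and $X = \mathbb L^p$. Multiplication by $\varsigma$ is a bounded operator on $\mathbb L^p$, so $f \in L_\alpha^s(J;\mathbb L^p)$ gives $\varsigma f \in L_\alpha^s(J;\mathbb L^p)$. For the initial value, the hypothesis is $u_0 \in (\mathbb L^p, \dom(A_p))_{\alpha - \frac{1}{s}, s}$, whereas Proposition~\ref{ps} needs $u_0$ in $(\mathbb L^p, \dom(\varsigma(A_p+B)))_{\alpha - \frac{1}{s}, s}$. Here one uses that $\dom(\varsigma(A_p+B)) = \dom(A_p+B) = \dom(A_p)$: the first equality because $\varsigma$ is a bounded invertible multiplier (as in Theorem~\ref{t-imagin}\ref{t-imagin-3}), and the second because $B$ maps $\mathbb L^\infty$ into $\mathbb L^p$ and hence, by Corollary~\ref{c-gebropot} together with the embedding $\dom(A_p) \hookrightarrow \mathbb L^\infty$ coming from Proposition~\ref{pLpR301} (valid since $p > d-1$), is a lower-order perturbation; in particular the graph norms of $A_p$ and $A_p + B$ are equivalent. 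Therefore the two interpolation spaces coincide with equivalent norms, and $u_0$ lies in the required space.

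With these identifications, Proposition~\ref{ps} yields a unique $u \in W_\alpha^{1,s}(J;\mathbb L^p) \cap L_\alpha^s(J;\dom(\varsigma(A_p+B))) = W_\alpha^{1,s}(J;\mathbb L^p) \cap L_\alpha^s(J;\dom(A_p))$ solving $u' + \varsigma(A_p+B)u = \varsigma f$, $u(0) = u_0$, and multiplying back by $\varepsilon = \varsigma^{-1}$ shows this $u$ solves \eqref{e-euat}; conversely any solution of \eqref{e-euat} in that class solves the divided equation, so uniqueness transfers. This also delivers the continuous dependence estimate \eqref{cont-dep} in the present setting, if desired.

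The only genuinely substantive point is the identification $\dom(A_p + B) = \dom(A_p)$ and the resulting equivalence of interpolation spaces; everything else is bookkeeping. I expect this to be mild, since it is exactly the lower-order-perturbation mechanism already used in the proof of Theorem~\ref{t-haupt}: $\varsigma B$ (equivalently $B$) factors through $\mathbb L^\infty$, which by Corollary~\ref{c-gebropot} is an interpolation space strictly between $\mathbb L^p$ and $\dom(\varsigma A_p)$, so $B$ is $A_p$-bounded with relative bound zero and does not change the domain. One should also remark that the meaning in which \eqref{e-parabol}--\eqref{e-initial} is solved is precisely the one discussed after \eqref{e-constitu} in the introduction, so that the abstract equation \eqref{e-euat} does encode the boundary and interface conditions \eqref{e-Diri}, \eqref{e-robin}, \eqref{e-xi-eq}.
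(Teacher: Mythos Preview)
Your proposal is correct and follows essentially the same route as the paper: divide by $\varepsilon$, invoke Theorem~\ref{t-haupt} for maximal parabolic regularity of $\varepsilon^{-1}(A_p+B)$, identify $\dom(\varepsilon^{-1}(A_p+B))=\dom(A_p)$ (hence the interpolation spaces coincide), and apply Proposition~\ref{ps}. The paper's proof is terser but the logical structure is identical.
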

\begin{proof}
One reformulates \eqref{e-euat} as
\[
u' +\varepsilon^{-1} A_p u +\varepsilon^{-1}Bu=\varepsilon^{-1}f, \qquad u(0)=u_0.
\]
Obviously, $\varepsilon^{-1}f$ satisfies the same assumptions as $f$.
Moreover, one has
$\dom(A_p)=\dom(\varepsilon^{-1}A_p) = \dom(\varepsilon^{-1} (A_p + B))$, with equivalent norms.
This implies that 
\[
(\mathbb L^p,\dom(A_p))_{\alpha-\frac {1}{s},s}
=(\mathbb L^p,\dom(\varepsilon^{-1}(A_p + B)))_{\alpha-\frac {1}{s},s}
 .  \]
The assertion then follows from Proposition \ref{ps} and Theorem \ref{t-haupt}.
\end{proof}

\begin{rem} In the situation of the theorem, the solution depends continuously 
on the data, due to (\ref{cont-dep}).
 Proposition \ref{extra-reg} gives further 
regularity properties of a solution. 
Moreover, again by (\ref{cont-dep}), it is straightforward to see that the solution depends continuously 
on the function $\varepsilon$, with respect to the $\mathbb L^\infty$-norm.
\end{rem}

\begin{rem} \label{r-reell}
Since the coefficient function $\mu$ is real valued, the resolvent of 
$\varsigma A_p$ 
commutes with complex conjugation on the spaces $\mathbb L^p$. The latter is
also true for the semigroup operators $e^{-t\varsigma A_p}$. Thus, the restriction of
$\varsigma A_p$ to real spaces $\mathbb L^p_\R$ also satisfies maximal parabolic regularity.
If $B$ is induced by a real valued function, then the same is  true for the 
operator $\varsigma (A_p+B)$.
\end{rem}

\begin{rem} \label{rem-heuristics-2} At the end of this section, let us give more detailed, partly
 heuristic arguments what the real advantage is of the treatment of our parabolic equations
 in the spaces $\mathbb L^p$.

When considering the solution 
$u$ of a parabolic equation $u'+Au =f$ on a Banach space $X$
one can form the dual pairing with any $\psi\in X^*$ to obtain
\begin{equation} \label{e-dualpaireq}
\frac {\partial }{\partial t} \langle u,\psi \rangle + \langle Au,\psi\rangle 
= \langle f,\psi \rangle. 
\end{equation}
E.g.,  if $X=W^{-1,2}(\Omega)$, then one can choose for $\psi$ as any element of 
$W^{1,2}_0(\Omega)$, but \emph{not} an indicator function 
of a subset of $\Omega$.
In our setting, the situation is different: if $X=\mathbb L^p$, then the dual pairing 
with the indicator function $\chi_U$ of a measurable set $U\subset \Omega$ is admissible.
Then \eqref{e-dualpaireq} reads, there $A$ taken as the $\mathbb L^2$-realization of
$A_2$,
\begin{equation} \label{e-balance}
\frac {\partial }{\partial t} \int_U u \,(d x +d\rho) +\int_U
(A_2u) \,(d x +d\rho)
=\int_U f \,(dx +d\rho).
\end{equation}
Since $A_2u \in \mathbb L^2$ for almost every time point $t$ we are now at least in principle in the position to 
rewrite $\int_U (A_2u) \,(d x +d\rho)$ as a boundary integral and thus to recover 
from \eqref{e-balance} the `original' physical balance law for  \eqref{e-parabol}--\eqref{e-initial}.

Indeed, applying \eqref{e-opdef} with  $v\in C_c^\infty(\Omega)$, it follows that the
 distributional divergence of $\mu \nabla u $ is given by the finite Radon measure 
induced by $(A_2u|_\Omega, A_2u|_{\Sigma}) \in L^2(\Omega)\times L^2(\Sigma; d \mathcal H_{d-1})$
 with respect to $dx + d\mathcal H_{d-1}$ (see also Remark \ref{r-hypersurf}). Under 
certain further assumptions on $\mu \nabla u$ or $U$ one can apply the generalized
 Gauss-Green theorems of e.g. \cite{CTZ}, \cite{Fug} and \cite{Zie1} to obtain 
\begin{equation}\label{Gauss-Green}
 \int_U (A_2u) \,(d x +d\rho) = \int_{\partial U} \nu\cdot \mu \nabla u  \, d \mathcal H_{d-1},
\end{equation}
where $\nu\cdot  \mu \nabla u \in L^1( \partial U; d \mathcal H_{d-1})$ is 
`the generalized normal component of the corresponding flux', see ibidem. 

Substituting \eqref{Gauss-Green} in \eqref{e-balance} gives the desired balance law, 
as is classical when $\nabla \cdot \mu \nabla u$ is 
an $L^2(\Omega)$-function; compare \cite[Chapter~21]{Somm} and \cite{CLolas}.
As already mentioned in the introduction, this is the basis for local flux balances, 
which are crucial for the foundation of Finite Volume methods for the numerical solution
of such problems, compare \cite{BRF}, \cite{FuhL} and \cite{gartn}. 
\end{rem}

\section{Quasilinear parabolic equations}
\label{s-semilinear}

In this section we treat a nondegenerate quasilinear variant of 
\eqref{e-parabol}--\eqref{e-initial}, 
including nonlinear terms in the dynamic equations on $\Gamma$ and $\SX$, i.e., 
\begin{alignat}{2}
\varepsilon \partial_t \mathfrak b(u)-\nabla \cdot \mu 
\mathfrak a(u)\nabla u & = F_\Omega(t,u)  
& \qquad & \text{in }\,J\times (\Omega\setminus \SX), \label{quasi-1} \\
u & =  0  & &\text {on }\,  J \times (\partial \Omega \setminus \Gamma), \label{quasi-2}\\
\varepsilon \partial_t \mathfrak b(u) +\nu \cdot \mu 
\mathfrak a(u)\nabla u & =  F_\Gamma(t,u) &  & \text{on }\,
 J \times \Gamma, \label{quasi-3}\\
\varepsilon \partial_t \mathfrak b(u) +[\nu_\SX \cdot \mu \mathfrak a(u)
\nabla u]  & =  F_\SX(t,u)  &  & \text{on }\, J \times \SX, \label{quasi-4} \\
u(T_0) & =   u_0  & &\text{in }\, \Omega \cup \Gamma,\label{quasi-5}
\end{alignat}
where  $J=(T_0,T_1)$ is a bounded interval.
Interesting examples for the nonlinearities on the
 left-hand side are e.g.\ when  $\mathfrak b$ and $\mathfrak a$ are an exponential, or the
Fermi--Dirac distribution function $\mathcal F_{1/2}$, which is given by
\[ \mathcal F_{1/2}(s) 
:= \frac{2}{\sqrt{\pi}} \, \int_0^\infty \frac{\sqrt{\xi}}{1 + e^{\xi- s}} \, d  \xi.
\]
Further, in phase separation problems a rigorous formulation as a minimal problem for the free
energy reveals that $\mathfrak {a} = \mathfrak {b}^\prime$ is appropriate.
This topic has been thoroughly investigated in \cite{Qua},
\cite{QRV}, \cite{GiacL1}, and \cite{GiacL2}, see also
\cite{GajS} and \cite{Grie3}.

\medskip

We consider from now on the real part $\LL_\R^p$ of the spaces $\mathbb L^p$
and the operators $A_p$.
For simplicity we write $\mathbb L^p$ for $\LL_\R^p$.
As in the linear case we give the quasilinear equation a suitable functional analytic 
formulation, and within this framework the problem will then be solved (see Definition 
\ref{quasi-solution} and Theorem \ref{t-semilinear} below).
Again throughout this section we  fix the numbers
\[
1<s<\infty
\qquad \mbox{and} \qquad \frac{1}{s} < \alpha \leq 1.
\] 
We impose the following conditions on the coefficients on the left-hand side
of (\ref{quasi-1})--(\ref{quasi-5}).

\begin{assu} \label{a-Verteil}
The coefficient matrix $\mu$ is real-valued, $\mathfrak b \in W^{2,\infty}_{\text{loc}}(\R)$ is
 such that $\mathfrak b'$ is positive, and $\mathfrak a \in W^{1,\infty}_{\text{loc}}(\R)$  is
 positive and satisfies 
$\int_0^\infty\mathfrak a(\zeta)\, d\zeta = \infty 
= \int_{-\infty}^0\mathfrak a(\zeta)\, d\zeta$. 
\end{assu}

Note that we do not require monotonicity for $\mathfrak a$.
In particular, terms of the form
 $\mathfrak a(u) = \eta + |u|^m$ with $\eta>0$ and $m\geq 1$ can be treated, 
that arise e.g.\ 
as a regularization of the porous medium equation.

It is in general not to expect that the domain of the realization of  
$-\nabla \cdot \mu \mathfrak a(v) \nabla$ on $\mathbb L^p$ as in 
Section \ref{SLpS2.2} is independent
 of $v\in L^\infty(\Omega)$.
Consider, e.g., the case of a smooth geometry with $\mu \mathfrak a(v)$ 
equal to a constant on the one hand and a nonsmooth $\mu \mathfrak a (v)$ on the other hand. 
  This observation motivates our definition of a solution of \eqref{quasi-1}--\eqref{quasi-5},
 which we describe in the following.
We put
\[
\mathfrak K (\xi):= \begin{cases}
\int_0^\xi \mathfrak a(\zeta) \,d\zeta, \;\text {if} \; \xi \ge 0, \\
-\int_\xi^0 \mathfrak a(\zeta) \,d\zeta, \;\text {if} \; \xi < 0.
\end{cases}
\] 
Then the assumptions on $\mathfrak a$ imply that
\[
\mathfrak K \colon \R\to \R \text{ is bijective},\quad 
\mathfrak K, \mathfrak K^{-1} \in W^{1,\infty}_{\text{loc}}(\R), \quad 
\mathfrak K'=\mathfrak a, \quad 
\mbox{and} \quad
\mathfrak K(0)=0=\mathfrak K^{-1}(0).
\]
In the sequel we identify the functions $\mathfrak b, \mathfrak K, \mathfrak K^{-1}$
 with the Nemytzkii operators they induce.
The reformulation of \eqref{quasi-1}--\eqref{quasi-5}
 is based on the so-called Kirchhoff transform $w = \mathfrak K(u)$.
This (formally) gives $\mathfrak a(u) \nabla u = \nabla w$
 and $\partial_t (\mathfrak b(u)) = \frac{\mathfrak b'}{\mathfrak a} \, (\mathfrak K^{-1}(w)) \partial_t  w$.
Since $\mathfrak K(0) = 0$, the problem \eqref{quasi-1}--\eqref{quasi-5} 
thus transforms into 
\begin{alignat*}{2}
\partial_t w - \eta(w) \nabla \cdot \mu \nabla w
& =  \eta(w) F_\Omega(t,\mathfrak K^{-1}(w))  
& \qquad & \text{in }\,J\times  (\Omega\setminus \SX), \\
w & =  0  & &\text {on }\,  J \times (\partial \Omega \setminus \Gamma), \\
 \partial_t w + \eta(w) \nu \cdot \mu \nabla w  & =   \eta(w)  F_\Gamma
(t,\mathfrak K^{-1}(w)) &  & \text{on }\, J \times \Gamma, \\
\partial_t w + \eta(w)[\nu_\SX \cdot \mu 
\nabla w]  & =   \eta(w)  F_\SX(t,\mathfrak K^{-1}(w))  &  & \text{on }\,
 J \times \SX, \\
w(T_0) & =   \mathfrak K(u_0)  & &\text{in }\, \Omega \cup \Gamma,
\end{alignat*}
where we have set 
\[
\eta(w) := \varepsilon ^{-1} \, \frac{\mathfrak a}{\mathfrak b'} \, \mathfrak K^{-1}(w).
\]
For all $t\in J$, let us further define the operator
\begin{equation}\label{quasi-R}
 R(t,w) := \begin{cases} 
\eta(w|_{\Omega})F_\Omega (t,\mathfrak K^{-1}(w|_{\Omega}))\; \text{ on} \;\Omega \setminus \SX,\\
\eta(w|_{\Gamma})  F_\Gamma(t,\mathfrak K^{-1}(w|_{\Gamma})) \; \text{ on} \;\Gamma, \\
\eta(w|_{\SX}) F_\SX (t,\mathfrak K^{-1}(w|_{\SX})) \; \text{ on} \;\SX,
\end{cases}
\end{equation}
acting on real-valued functions defined on $\Omega\cup \Gamma$.

\begin{definition} \label{quasi-solution} Let $p \in (\frac{d-1}{\alpha - \frac{1}{s}}, \infty)$,
 and  let $A_p$ be the realization of $-\nabla \cdot \mu \nabla$ on $\mathbb L^p$ as in 
Section \ref{SLpS2.2}.
We say that $u\in C([T_0,T_1];\mathbb L^\infty)$ is a solution of
 \eqref{quasi-1}--\eqref{quasi-5} on $J$ if 
\[
\mathfrak K (u) \in W_\alpha^{1,s}(J; \mathbb L^p)\cap L_\alpha^s(J; \dom(A_p)),
\]
and if $w = \mathfrak K(u)$ satisfies
\begin{equation}\label{quasi-abstract}
\partial_t w + \eta(w) A_p w = R(\cdot,w) \quad \text{on }J, \qquad w(T_0) = \mathfrak K (u_0).
\end{equation}
\end{definition}

If $\mathfrak K(u)$ is as above, then $u\in C([T_0,T_1];\mathbb L^\infty)$ is already a consequence of
 Proposition \ref{extra-reg}, Corollary \ref{c-gebropot} and the regularity of $\mathfrak K$.
 Proposition \ref{extra-reg} shows that in fact  $u\in C^\gamma([T_0,T_1];\mathbb L^\infty)$
 for some $\gamma>0$.
For specific choices of $\mathfrak K$ additional regularity may carry 
over from $\mathfrak K(u)$ to $u$.
In any case one has $u(t,\cdot) \to u_0$ as $t\to T_0$ in the $\mathbb L^\infty$-norm.

Observe further that in the definition it is necessary that 
$\mathfrak K (u_0) \in (\mathbb L^p, \dom(A_p))_{\alpha-\frac{1}{s},s}$.
It would be interesting to
 find another description for this condition for a class of nonlinearities $\mathfrak a$.
 If $\mathfrak a$ is constant, then  a solution in the above sense can be defined for all 
$u_0 \in (\mathbb L^p, \dom(A_p))_{\alpha-\frac{1}{s},s}$.

\medskip 

If $\mathfrak a = \mathfrak b'$, then \eqref{quasi-abstract} is in fact a semilinear problem.
This
 is in particular the case for the phase separation problems from above. 

\medskip

To solve \eqref{quasi-abstract} we intend to use the following abstract existence and uniqueness result, 
which is proved in \cite{pru2} for the temporally unweighted case $\alpha = 1$.
The proof in \cite{pru2} literally carries over to the weighted case $\alpha < 1$.

\begin{proposition} \label{p-pruess}
Let $X,D$ be Banach spaces such that $D$ embeds continuously and densely into $X$. Assume
$\mathcal{A} \colon (X, D)_{\alpha -\frac {1}{s},s} \to \mathcal{L}(D, X)$ and 
$\mathcal R  \colon  J \times (X, D)_{\alpha-\frac {1}{s},s} \to X$
are such that $\mathcal R(\cdot,w_0)$ is measurable for all $w_0\in (X, D)_{\alpha-\frac {1}{s},s}$, 
that $ \mathcal R(\cdot, 0) \in L_\alpha^s(J;X)$ and that for  all  $M > 0$ there are
$C_M > 0$ and $r_M \in L_\alpha^s(J)$ with
\[
\| \mathcal A(w_1) - \mathcal A(w_2) \|_{\mathcal L( D,X)} 
\le C_M \, \| w_1 - w_2\|_{(X,D)_{\alpha-\frac {1}{s},s}} 
\]
and
\[
\| \mathcal R(t,w_1) - \mathcal R(t,w_2)\|_X \le r_M(t) \, \| w_1 - w_2 \|_{(X, D)_{\alpha-\frac {1}{s},s}}
\qquad \mbox{ for a.e.\ } t \in J,
\]
for all $w_1,w_2 \in (X, D)_{\alpha-\frac {1}{s},s}$ with 
$\|w_1\|_{(X, D)_{\alpha-\frac {1}{s},s}} \le M$ and 
$\|w_2\|_{(X, D)_{\alpha-\frac {1}{s},s}} \le M$. Assume further that for any $w_0\in (X, D)_{\alpha -\frac {1}{s},s}$ the operator $\mathcal A(w_0)$ with domain $D$ on $X$ satisfies maximal parabolic regularity.

Then for all $w_0 \in (X, D)_{\alpha-\frac {1}{s},s}$ there are 
$T^*\in (T_0,T_1]$ and a unique maximal solution $w$ of
\[
w'+\mathcal A(w)w=  \mathcal R(\cdot,w) \quad \text{on }(T_0,T^*), \qquad w(T_0)=w_0,
\]
such that 
$w \in W_\alpha^{1,s}(T_0,T;X) \cap L_\alpha^s(T_0,T; D)$
for all $T\in (T_0,T^*)$.
\end{proposition}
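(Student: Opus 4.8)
The plan is to run the standard Banach fixed point argument for quasilinear maximal regularity, as in \cite{pru2}, the only point to watch being that all the underlying function-space facts — the trace space at $t=T_0$, the solution operator for nontrivial initial data, and the embedding into $C$ — are now used in the weighted form provided by \cite{PrSi}, i.e.\ by Propositions~\ref{ps} and~\ref{extra-reg}. Write $X_\gamma := (X,D)_{\alpha-\frac 1s,s}$ and, for $T \in (T_0,T_1]$, abbreviate $\mathbb E(T_0,T) := W_\alpha^{1,s}(T_0,T;X)\cap L_\alpha^s(T_0,T;D)$ and $\mathbb E_0(T_0,T) := \{z \in \mathbb E(T_0,T) : z(T_0)=0\}$. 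Fix $w_0 \in X_\gamma$. Since $\mathcal A(w_0)$ has maximal parabolic regularity on $X$, Proposition~\ref{ps} (on $(T_0,T_1)$ after a time shift) yields a unique $v_\ast \in \mathbb E(T_0,T_1)$ with $v_\ast' + \mathcal A(w_0)v_\ast = \mathcal R(\cdot,0)$ and $v_\ast(T_0)=w_0$, and Proposition~\ref{extra-reg} gives $v_\ast \in C([T_0,T_1];X_\gamma)$. Substituting $w = v_\ast + z$, the problem becomes $z' + \mathcal A(w_0)z = F(z)$, $z(T_0)=0$, with
\[
F(z) := \bigl(\mathcal A(w_0)-\mathcal A(v_\ast+z)\bigr)(v_\ast+z) + \mathcal R(\cdot,v_\ast+z) - \mathcal R(\cdot,0).
\]
By maximal regularity again, the solution operator $\mathcal G$ of $z'+\mathcal A(w_0)z=f$, $z(T_0)=0$, is bounded from $L_\alpha^s(T_0,T;X)$ into $\mathbb E_0(T_0,T)$, with a norm $C_\ast$ that may be chosen independent of $T \in (T_0,T_1]$ (extend $f$ by $0$ to $(T_0,T_1)$ and restrict). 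It therefore suffices to show that $\Phi := \mathcal G\circ F$ is a strict contraction of a small closed ball $B_R\subset\mathbb E_0(T_0,T)$ into itself for $T-T_0$ small; strong measurability of $F(z)$ is routine from the Carathéodory assumptions on $\mathcal R$ and the continuity of $\mathcal A$.

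The estimates use three properties that improve as $T\downarrow T_0$: (a) the embedding $\mathbb E_0(T_0,T)\hookrightarrow C([T_0,T];X_\gamma)$ holds with a constant $C_{\mathrm{emb}}$ independent of $T\in(T_0,T_1]$ (extend a zero-trace function to $\mathbb E_0(T_0,T_1)$ with norm control, as in \cite{PrSi}, then embed); (b) $\sup_{t\in[T_0,T]}\|v_\ast(t)-w_0\|_{X_\gamma}\to 0$, since $v_\ast\in C([T_0,T_1];X_\gamma)$ and $v_\ast(T_0)=w_0$; (c) $\|v_\ast\|_{L_\alpha^s(T_0,T;D)}\to 0$ and $\|r_M\|_{L_\alpha^s(T_0,T)}\to 0$ by dominated convergence. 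Fix $M > \sup_{[T_0,T_1]}\|v_\ast\|_{X_\gamma}+1$ together with the corresponding constants $C_M$ and $r_M$. For $z\in B_R$ with $R\le 1$ and $T-T_0$ small one has $\|v_\ast+z\|_{C([T_0,T];X_\gamma)}\le M$, so the local Lipschitz bounds on $\mathcal A$ and $\mathcal R$ apply. Estimating $\|(\mathcal A(w_0)-\mathcal A(v_\ast+z))(v_\ast+z)(t)\|_X\le C_M\|w_0-v_\ast(t)-z(t)\|_{X_\gamma}\,\|(v_\ast+z)(t)\|_D$ and applying Hölder in time (the operator-norm factor in $L^\infty$, the $D$-factor in $L_\alpha^s$), together with $\|\mathcal R(t,v_\ast+z(t))-\mathcal R(t,0)\|_X\le r_M(t)\|v_\ast(t)+z(t)\|_{X_\gamma}\le M r_M(t)$, gives $\|F(z)\|_{L_\alpha^s(T_0,T;X)}\le a(T)+b(T)R$ with $a(T),b(T)\to 0$. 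A parallel computation for $F(z_1)-F(z_2)$, splitting $(\mathcal A(w_0)-\mathcal A(v_\ast+z_1))(v_\ast+z_1)-(\mathcal A(w_0)-\mathcal A(v_\ast+z_2))(v_\ast+z_2) = (\mathcal A(v_\ast+z_2)-\mathcal A(v_\ast+z_1))(v_\ast+z_1) + (\mathcal A(w_0)-\mathcal A(v_\ast+z_2))(z_1-z_2)$, yields $\|F(z_1)-F(z_2)\|_{L_\alpha^s(T_0,T;X)}\le\bigl(c_1(\|v_\ast\|_{L_\alpha^s(T_0,T;D)}+R)+c_2(T)\bigr)\|z_1-z_2\|_{\mathbb E_0(T_0,T)}$, with $c_1$ a fixed constant and $c_2(T)\to 0$. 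Choosing first $R$ with $C_\ast c_1 R$ suitably small, then $T-T_0$ small, makes $\Phi$ map $B_R$ into itself and contract; the Banach fixed point theorem gives a unique $z$, and $w:=v_\ast+z\in\mathbb E(T_0,T)$ solves $w'+\mathcal A(w)w=\mathcal R(\cdot,w)$, $w(T_0)=w_0$.

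Finally one promotes this to a unique maximal solution. If $w_1,w_2\in\mathbb E(T_0,T')$ both solve the problem, then, both being continuous into $X_\gamma$ with value $w_0$ at $T_0$, they lie in the ball $B_R$ near $T_0$ and hence coincide there; letting $\tau$ be the supremum of the times up to which they agree and, if $\tau<T'$, restarting the fixed point argument at $\tau$ with initial value $w_1(\tau)=w_2(\tau)\in X_\gamma$ (on intervals bounded away from $T_0$ the temporal weight is inessential and $\mathbb E$ coincides locally with the unweighted maximal regularity class) forces $\tau=T'$, so solutions are unique wherever defined. Letting $T^\ast$ be the supremum of all $T\in(T_0,T_1]$ for which a solution on $(T_0,T)$ exists and gluing the (consistent, by uniqueness) solutions yields the unique maximal solution $w$ with $w\in\mathbb E(T_0,T)$ for all $T\in(T_0,T^\ast)$. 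The main obstacle is the quantitative step in the middle paragraph — genuine smallness of the self-map and contraction constants as $T\downarrow T_0$ — which rests squarely on the uniform-in-$T$ embedding in (a) and on $v_\ast\in C([T_0,T_1];X_\gamma)$ in (b), i.e.\ on the weighted trace and embedding results of \cite{PrSi}; these are precisely what make the argument of \cite{pru2} carry over verbatim to $\alpha<1$.
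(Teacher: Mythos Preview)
Your proof is correct and follows exactly the route the paper indicates: the paper does not write out its own proof of this proposition but simply states that it ``is proved in \cite{pru2} for the temporally unweighted case $\alpha = 1$'' and that ``the proof in \cite{pru2} literally carries over to the weighted case $\alpha<1$.'' You have supplied that argument in detail, correctly flagging the weighted-space ingredients from \cite{PrSi} (the trace/embedding Propositions~\ref{ps} and~\ref{extra-reg}, and the uniform-in-$T$ embedding $\mathbb E_0(T_0,T)\hookrightarrow C([T_0,T];X_\gamma)$) as the only new inputs needed to make the Pr\"uss contraction-mapping proof go through for $\alpha<1$.
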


We apply this result to \eqref{quasi-abstract}. 
Suppose  $\mathfrak b$ and $\mathfrak a$ satisfy Assumption \ref{a-Verteil}.
Let $p \in (\frac{d-1}{\alpha - \frac{1}{s}}, \infty)$, 
$X= \mathbb L^p$, $D = \dom (A_p)$ and $\mathcal A(w) = \eta(w) A_p$ for all 
$w \in (\mathbb L^p, \dom(A_p))_{\alpha- \frac{1}{s},s}$. 
Corollary \ref{c-gebropot} implies that
\begin{equation}\label{quasi-embed}
 (\mathbb L^p, \dom(A_p))_{\alpha- \frac{1}{s},s} \subset \mathbb L^\infty.
\end{equation}
Thus if $w_0 \in (\mathbb L^p, \dom(A_p))_{\alpha- \frac{1}{s},s}$ and
$\|w_0\|_{(\mathbb L^p, \dom(A_p))_{\alpha- \frac{1}{s},s}} \leq M$ for a given number $M$, 
then it follows from \eqref{quasi-embed} that the image of $\Omega\cup \Gamma$ under $w_0$ is 
almost everywhere contained in a compact interval that only depends on $M$. 
In particular, this gives $\eta(w_0), \eta(w_0)^{-1} \in \mathbb L^\infty$, 
and the operator $\mathcal A(w_0)$ with domain $\dom(A_p)$ on $\mathbb L^p$ satisfies maximal parabolic regularity 
by Theorem \ref{t-qreg}. 

The function $\eta$ is locally Lipschitz continuous on $\R$.
Therefore
\begin{align*}
\|\mathcal A(w_1) - \mathcal A(w_2)\|_{\mathcal L(\dom(A_p),\mathbb L^p)}&\,  \leq \|\eta(w_1) - \eta(w_2)\|_{\mathbb L^\infty} \\
&\, \leq C_M\| w_1-w_2\|_{\mathbb L^\infty} \leq \| w_1-w_2\|_{(\mathbb L^p, \dom(A_p))_{\alpha- \frac{1}{s},s}}
\end{align*}
for all $w_1,w_2 \in (\LL^p,\dom(A_p))_{\alpha-\frac {1}{s},s}$ with 
$\|w_j\|_{(\LL^p,\dom(A_p))_{\alpha-\frac {1}{s},s}} \le M$ for all $j \in \{ 1,2 \} $.
This verifies the conditions of the above proposition for $\mathcal A$.

We next present sufficient conditions for the functions $F_\Omega$, $F_\Gamma$ and $F_\SX$
such that the operator $R$, defined in \eqref{quasi-R}, satisfies the  
conditions for $\mathcal R$ in Proposition~\ref{p-pruess}.

\begin{assu} \label{rhs-assu} For all $\xi \in \R$ the mappings 
$F_\Omega(\cdot,\xi) \colon J\to \R$, 
$F_\Gamma(\cdot,\xi) \colon J\to \R$ and $F_\SX(\cdot,\xi) \colon J\to \R$
are measurable.
For all $M>0$ there is  $r_M \in L_\alpha^s(J)$ such that
\[
|F_\Omega(t,\xi_1) - F_\Omega(t,\xi_2)| \le r_M(t) \, | \xi_1 -\xi_2 | 
\]
for a.e.\ $t \in J$ and $\xi_1,\xi_2 \in \R$ with $|\xi_1|, |\xi_2| \leq M$;
and analogous conditions for $F_\Gamma$ and $F_\SX$.
\end{assu}

Under the above assumption, \eqref{quasi-embed} implies that 
$R(\cdot,w_0) \colon J \to \mathbb L^p$ is measurable for all $w_0\in (\mathbb L^p,
 \dom(A_p))_{\alpha-\frac{1}{s},s}$ and that $R(\cdot,0)\in L_\alpha^s(J)$.
We verify 
the Lipschitz property for the first component of $R$.
If $M > 0$, and $w_1,w_2 \in (\mathbb L^p, \dom(A_p))_{\alpha-\frac{1}{s},s}$ with 
$\|w_1\|_{(\mathbb L^p, \dom(A_p))_{\alpha-\frac{1}{s},s}} \leq M$ and
$\|w_2\|_{(\mathbb L^p, \dom(A_p))_{\alpha-\frac{1}{s},s}} \leq M$,
then for a.e.\ $t\in J$ we have 
\begin{align}\label{quasi-R-est}
\|\eta(w_1|_\Omega) F_\Omega(t,\mathfrak K^{-1}&\,(w_1|_\Omega)) -  \eta(w_2|_\Omega)
F_\Omega
(t,\mathfrak K^{-1}(w_2|_\Omega))\|_{L^p(\Omega)}\\
&\,  \leq \|\eta(w_1|_\Omega) - \eta(w_2|_\Omega)\|_{L^\infty(\Omega)} \|
F_\Omega(t,\mathfrak K^{-1}(w_1|_\Omega))\|_{L^p(\Omega)}
 \nonumber \\
&\, \qquad + \|\eta(w_2|_\Omega)\|_{L^\infty(\Omega)} \|F_\Omega(t, \mathfrak K^{-1}(w_1|_\Omega)) - 
F_\Omega(t, \mathfrak K^{-1}(w_1|_\Omega))\|_{L^p(\Omega)} \nonumber\\
&\, \leq C_M \big( \|w_1|_\Omega - w_2|_\Omega\|_{L^\infty(\Omega)} + \widetilde{r}_{M}(t)
 \|\mathfrak K^{-1}(w_1|_\Omega) - \mathfrak K^{-1}(w_2|_\Omega)\|_{L^p(\Omega)}\big) \nonumber\\
&\,\leq C_M(1+ \widetilde{r}_{M}(t))\|w_1 - w_2\|_{(\mathbb L^p, \dom(A_p))_{\alpha-\frac{1}{s},s}}\nonumber,
\end{align}
for a suitable $\widetilde{r}_{M}\in L_\alpha^s(J)$.
The same arguments apply to the other components of $R$, and thus $R$ is as desired to apply the proposition. 

We have proven the main result of this section.

\begin{theorem} \label{t-semilinear}
Let $p \in (\frac{d-1}{\alpha - \frac{1}{s}}, \infty)$, and suppose that 
$\Omega$, $\Gamma$, $\SX$, and  $\varepsilon$ are as in 
Theorem~{\rm \ref{t-solution}}, that $\mu$, $\mathfrak b$ and $\mathfrak a$ are as in 
Assumption~{\rm \ref{a-Verteil}}, and that $f$, $g$ and $h$ are as in 
Assumption~{\rm \ref{rhs-assu}}. 
 Then for all $u_0\in \mathbb L^\infty$ with 
$\mathfrak K(u_0) \in  (\mathbb L^p,\dom(A_p))_{\alpha-\frac {1}{s},s}$ there are
$T^*= T^*(u_0) \in (T_0,T_1]$ and a 
unique maximal solution  $u \in C([T_0,T^*); \mathbb L^\infty)$ of 
\eqref{quasi-1}--\eqref{quasi-5} 
in the sense of Definition~{\rm \ref{quasi-solution}}. 
This means that for all $T_0 < T < T^*$ we have
\[
\mathfrak K (u) \in W_\alpha^{1,s}(T_0,T; \mathbb L^p)\cap L_\alpha^s(T_0,T; \dom(A_p)),
\]
and $\mathfrak K(u)$ is the unique solution of 
\begin{equation} \label{quasi-eq}
 \partial_t w + \eta(w) A_p w = R(\cdot,w) \quad \text{on }(T_0,T), \qquad w(T_0) = \mathfrak K (u_0).
\end{equation}
\end{theorem}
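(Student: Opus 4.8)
The plan is to apply the abstract existence and uniqueness result Proposition~\ref{p-pruess} to the transformed Cauchy problem \eqref{quasi-abstract}, and then to translate the outcome back to the original equations \eqref{quasi-1}--\eqref{quasi-5} via the Kirchhoff transform. Concretely, I would take $X = \mathbb L^p$, $D = \dom(A_p)$, $\mathcal A(w) = \eta(w)\,A_p$ and $\mathcal R(t,w) = R(t,w)$ with $R$ as in \eqref{quasi-R}. The continuous dense embedding $\dom(A_p) \hookrightarrow \mathbb L^p$ that Proposition~\ref{p-pruess} requires holds because $-A_p$ generates a $C_0$-semigroup on $\mathbb L^p$ (Corollary~\ref{cLpR209}).

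Next I would check the hypotheses of Proposition~\ref{p-pruess} one by one. The embedding \eqref{quasi-embed}, available since $p > \frac{d-1}{\alpha - \frac{1}{s}}$ by Corollary~\ref{c-gebropot}, shows that any ball in $(\mathbb L^p,\dom(A_p))_{\alpha-\frac{1}{s},s}$ is mapped by its elements into a fixed compact real interval; hence $\eta(w_0),\eta(w_0)^{-1}\in\mathbb L^\infty$ for such $w_0$, and Theorem~\ref{t-qreg} yields maximal parabolic regularity of $\mathcal A(w_0) = \eta(w_0)A_p$ on $\mathbb L^p$ with domain $\dom(A_p)$. The local Lipschitz estimate for $\mathcal A$ follows from the local Lipschitz continuity of $\eta$ on $\R$ together with \eqref{quasi-embed}, exactly as in the computation preceding the theorem. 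For $\mathcal R = R$, Assumption~\ref{rhs-assu} gives measurability of $R(\cdot,w_0)$ and $R(\cdot,0)\in L_\alpha^s(J)$, while the bound \eqref{quasi-R-est} together with its analogues on $\Gamma$ and $\SX$ provides the required local Lipschitz estimate in the $(\mathbb L^p,\dom(A_p))_{\alpha-\frac{1}{s},s}$-norm. Since $\mathfrak K(u_0)\in(\mathbb L^p,\dom(A_p))_{\alpha-\frac{1}{s},s}$ by hypothesis, Proposition~\ref{p-pruess} applies with $w_0 = \mathfrak K(u_0)$.

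This produces $T^* \in (T_0,T_1]$ and a unique maximal solution $w$ of \eqref{quasi-abstract} with $w \in W_\alpha^{1,s}(T_0,T;\mathbb L^p)\cap L_\alpha^s(T_0,T;\dom(A_p))$ for every $T\in(T_0,T^*)$. Setting $u := \mathfrak K^{-1}(w)$ and invoking Proposition~\ref{extra-reg}, Corollary~\ref{c-gebropot} and the regularity of $\mathfrak K^{-1}$ (as recorded in the discussion after Definition~\ref{quasi-solution}), one obtains $u\in C([T_0,T^*);\mathbb L^\infty)$ with $\mathfrak K(u) = w$, so that $u$ is a solution of \eqref{quasi-1}--\eqref{quasi-5} in the sense of Definition~\ref{quasi-solution} and $\mathfrak K(u)$ solves \eqref{quasi-eq}. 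Uniqueness and maximality of $u$ transfer from those of $w$, since $\mathfrak K$ is a bijection of $\R$ with $\mathfrak K,\mathfrak K^{-1}\in W^{1,\infty}_{\mathrm{loc}}(\R)$, whence $u$ is a solution in the sense of Definition~\ref{quasi-solution} if and only if $w=\mathfrak K(u)$ solves \eqref{quasi-abstract}.

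The main point requiring care is not a deep obstacle but the bookkeeping around the Kirchhoff transform: one must ensure that the \emph{formal} identities $\mathfrak a(u)\nabla u = \nabla w$ and $\partial_t\mathfrak b(u) = \frac{\mathfrak b'}{\mathfrak a}(\mathfrak K^{-1}(w))\,\partial_t w$ are legitimate for the function class at hand, so that a solution of the abstract problem genuinely corresponds to a solution of \eqref{quasi-1}--\eqref{quasi-5}; this is precisely what Definition~\ref{quasi-solution} encodes. In the present formulation the theorem therefore reduces, as indicated, to verifying the hypotheses of Proposition~\ref{p-pruess}, all of which have been established in the discussion preceding the statement, and I expect the write-up of the proof itself to be short.
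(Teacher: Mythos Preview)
Your proposal is correct and follows essentially the same route as the paper: the proof is precisely the verification of the hypotheses of Proposition~\ref{p-pruess} for $X=\mathbb L^p$, $D=\dom(A_p)$, $\mathcal A(w)=\eta(w)A_p$ and $\mathcal R=R$, all of which is carried out in the discussion preceding the theorem statement, after which the paper simply declares ``We have proven the main result of this section.'' Your additional remarks on translating back via $\mathfrak K^{-1}$ and on the bookkeeping around the Kirchhoff transform are appropriate and consistent with the paper's viewpoint (encoded in Definition~\ref{quasi-solution}).
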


Instead of $F_\Omega$, $F_\Gamma$ and $F_\Sigma$ one can easily find 
also non-local maps such that the corresponding operator $R$ satisfies the condition of 
Proposition \ref{p-pruess}.
One can take for example (linear or nonlinear) integral operators with
 suitable kernel properties.
Moreover, in our example, $F_\Omega$ maps $L^\infty(\Omega)$
into itself, while $F_\Gamma$ maps $L^\infty(\Gamma)$ itself, and correspondingly also for $F_\SX$, 
i.e., the mapping $R$ has no crossing terms.
This is also not necessary in general.

The nonlinearity in the elliptic operator may also be a nonlocal operator.
This case 
arises e.g.\ in models for the diffusion of bacteria; 
see \cite{CC}, \cite{CLovat} and references therein. 

\medskip

We end this section with some comments on the case when \eqref{quasi-1}--\eqref{quasi-5} is semilinear,
 i.e., when $\mathfrak b = \mathfrak K = \text{id}$, such that $u$ itself solves the realization 
\eqref{quasi-eq} of the problem.

The following is a useful criterion for the global existence of solutions.

\begin{proposition} 
Adopt the assumptions of Theorem~{\rm \ref{t-semilinear}}.
Suppose in addition 
that $\mathfrak b = \mathfrak K = \text{id}$, and let $u\in C([T_0,T^*); \mathbb L^\infty)$ 
be the maximal solution of \eqref{quasi-1}--\eqref{quasi-5}.
If
\[
\limsup_{t\to T^*}\|u(t,\cdot)\|_{L^\infty(\Omega)} < \infty,\]
then $u$ is a global solution, i.e., $T^* = T_1$ and $u\in W_\alpha^{1,s}(J; \mathbb L^p)\cap L_\alpha^s(J; \dom(A_p))$.
\end{proposition}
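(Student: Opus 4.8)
The plan is to run the standard continuation (blow‑up) argument for the maximal solution furnished by Proposition~\ref{p-pruess}, exploiting that when $\mathfrak b = \mathfrak K = \mathrm{id}$ one has $\mathfrak a = \mathfrak b' \equiv 1$ and $\eta \equiv \varepsilon^{-1}$, so that \eqref{quasi-eq} becomes the genuinely semilinear problem $u' + \varepsilon^{-1} A_p u = R(\cdot, u)$, $u(T_0) = u_0$, governed by the \emph{fixed} operator $\mathcal A := \varepsilon^{-1} A_p$ on $X := \mathbb L^p$ with domain $D := \dom(A_p)$. By Theorem~\ref{t-qreg} this operator has maximal parabolic regularity, by Corollary~\ref{c-gebropot} one has $(X, D)_{\alpha - \frac1s, s} \subset \mathbb L^\infty$, and the Lipschitz and integrability hypotheses on $\mathcal A$ and on $\mathcal R = R$ required in Proposition~\ref{p-pruess} have already been checked on all of $J$. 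I would argue by contradiction, assuming $T^* < T_1$.

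First I would extract the a priori bound: the $\limsup$‑hypothesis gives $M > 0$ and $t_1 \in (T_0, T^*)$ with $\|u(t)\|_{L^\infty(\Omega)} \le M$ for all $t \in [t_1, T^*)$, which I would upgrade to $\|u(t)\|_{\mathbb L^\infty} \le M$ on $[t_1, T^*)$: for a.e.\ $t$ one has $u(t) \in \dom(A_p) \subset \gJ W^{1,2}_\Gamma$ (consistency with the $\mathbb L^2$‑operator), so the $\Gamma \cup \SX$‑component of $u(t)$ is the average‑limit trace of its $\Omega$‑component and is likewise bounded by $M$, and norm‑continuity of $t \mapsto u(t)$ into $\mathbb L^\infty$ extends the bound to every $t \in [t_1, T^*)$. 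Feeding this into Assumption~\ref{rhs-assu} (together with $\varepsilon^{-1} \in \mathbb L^\infty$, $R(\cdot, 0) \in L_\alpha^s(J)$ and $\mathcal H_{d-1}(\Gamma \cup \SX) < \infty$), exactly as in the estimate \eqref{quasi-R-est} with the second argument frozen to $0$, I would conclude $g := R(\cdot, u) \in L_\alpha^s((t_1, T^*); \mathbb L^p)$.

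The heart of the matter is then a freezing/bootstrap step. I would extend $g$ to $(t_1, T_1)$, say by $R(\cdot, 0)$ on $[T^*, T_1)$, so $g \in L_\alpha^s((t_1, T_1); \mathbb L^p)$, and note $u(t_1) \in (X, D)_{1 - \frac1s, s}$ by Proposition~\ref{extra-reg}. Since $t_1 > 0$ makes the temporal weight harmless on $(t_1, T_1)$, Proposition~\ref{ps} (applied after shifting $t_1$ to $0$, in the unweighted setting) produces a unique $v$ in the maximal‑regularity class on $(t_1, T_1)$ solving the \emph{linear} Cauchy problem $v' + \varepsilon^{-1} A_p v = g$, $v(t_1) = u(t_1)$. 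On $(t_1, T^*)$ both $u$ and $v$ solve the same linear problem in that class, so $u = v$ there by uniqueness; and Proposition~\ref{extra-reg} applied to $v$ gives $v \in BUC([t_1, T_1]; (X, D)_{\alpha - \frac1s, s})$. Hence $u^* := \lim_{t \uparrow T^*} u(t)$ exists in $(X, D)_{\alpha - \frac1s, s}$.

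Finally I would restart and glue: Proposition~\ref{p-pruess} applied on $(T^*, T_1)$ with datum $u^*$ yields a solution $\bar u$ of \eqref{quasi-eq} on $[T^*, \bar T)$ for some $\bar T \in (T^*, T_1]$, and concatenating $u|_{[T_0, T^*)}$ with $\bar u|_{[T^*, \bar T)}$ gives a solution on $[T_0, \bar T)$ in the sense of Definition~\ref{quasi-solution} — continuity at $T^*$ being $u(T^*-) = u^* = \bar u(T^*)$, and the two weighted‑Sobolev pieces joining because functions in $W_\alpha^{1,s} \subset W^{1,1}$ are absolutely continuous with matching endpoint traces while the weight is bounded above and below near $T^*$ — which contradicts the maximality of $T^*$. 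Thus $T^* = T_1$, and the global regularity $u \in W_\alpha^{1,s}(J; \mathbb L^p) \cap L_\alpha^s(J; \dom(A_p))$ follows by combining $u = v$ on $(t_1, T_1)$ (Step~2 with $T^* = T_1$, where $v$ lies in the weighted class as $t_1 > 0$) with the fact that $u$ already belongs to the weighted class on $(T_0, t_1)$ by Definition~\ref{quasi-solution}. I expect the main obstacle to be precisely the freezing step — turning the pointwise‑in‑time, volume‑only $L^\infty$‑control into control of $u$ up to $T^*$ inside the maximal‑regularity class, hence into convergence of $u(t)$ in the trace space $(X,D)_{\alpha-\frac1s,s}$ as $t\uparrow T^*$, which is what makes the restart possible; the passage from $L^\infty(\Omega)$ to $\mathbb L^\infty$ is a minor but necessary point.
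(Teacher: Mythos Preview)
Your argument is correct but takes a genuinely different route from the paper's. The paper's proof is much shorter: it observes (as you do) that the volume bound $\|u(t)\|_{L^\infty(\Omega)}$ dominates $\|u(t)\|_{\mathbb L^\infty}$ for a.e.\ $t$, so $M:=\|u\|_{L^\infty(T_0,T^*;\mathbb L^\infty)}<\infty$; it then uses the Lipschitz estimate for $R$ to bound $\|R(\cdot,u)\|_{L_\alpha^s(T_0,T^*;\mathbb L^p)}$, plugs this into the a~priori estimate \eqref{cont-dep} from Proposition~\ref{ps} to get $\|u'\|_{L_\alpha^s(T_0,T;\mathbb L^p)}+\|u\|_{L_\alpha^s(T_0,T;\dom(A_p))}$ bounded uniformly in $T<T^*$, and concludes $T^*=T_1$ by invoking the abstract blow-up criterion \cite[Corollary~3.2]{pru2}. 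In other words, the paper outsources the entire continuation step to an external reference. Your freezing/linear-extension/restart/gluing argument is essentially a direct, self-contained proof of that criterion in the present semilinear situation; you also make explicit that $\eta\equiv\varepsilon^{-1}$ renders $\mathcal A$ constant, so no Lipschitz hypothesis on $\mathcal A$ is needed, a simplification the paper does not highlight. The price you pay is length and a handful of extra technical checks (the trace regularity of $u(t_1)$, uniqueness for the frozen linear problem, gluing of the weighted-Sobolev pieces at $T^*$) that the paper avoids by citation.
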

\begin{proof}
By Proposition \ref{ps}, for all $T< T^*$ the solution $u$ satisfies 
\begin{equation}\label{maxreg-nl}
 \|u'\|_{L_\alpha^s(T_0,T; \mathbb L^p)} +\|u\|_{L_\alpha^s(T_0,T; \dom(A_p))}
\le c\big( \|u_0\|_{(X,\dom(A_p))_{\alpha-\frac{1}{s},s}} 
    + \|R(\cdot, u)\|_{L_\alpha^s(T_0,T; \mathbb L^p)}\big),
\end{equation}
where $c$ is uniform in~$T$.
Observe that
$\|u(t,\cdot)\|_{\mathbb L^\infty}\leq \|u(t,\cdot)\|_{ L^\infty(\Omega)}$ for 
almost every $t$ by the definition of the trace (see Section \ref{s-function-spaces}). 
Hence $M = \|u\|_{L^\infty(T_0,T^*; \mathbb L^\infty)}< \infty$. 
Estimates as in \eqref{quasi-R-est} yield
\begin{align*}
 \|R(\cdot, u)\|_{L_\alpha^s(T_0,T^*; \mathbb L^p)}  
&\, \leq \|R(\cdot,0)\|_{L_\alpha^s(T_0,T^*; \mathbb L^p)} 
    +  C_M\big(1+ \|\widetilde{r}_{M}\|_{L_\alpha^s(T_0,T^*)}\big).
\end{align*}
Therefore the terms on the left-hand side of (\ref{maxreg-nl}) are bounded uniformly in~$T$.
By \cite[Corollary 3.2]{pru2}, this implies $T^* = T_1$.\end{proof}

We finally comment on the asymptotics of solutions.

\begin{rem}
 Under the assumptions of Theorem \ref{t-semilinear}, in the autonomous semilinear case the solutions form a 
local semiflow in the phase space $\dom(A_p^\theta)$, where $\theta$ is sufficiently close to $1$.
Since the resolvent of $A_p$ is compact by 
Lemma~\ref{lLpR206}\ref{lLpR206-3}, the solution semiflow is compact, and 
bounded orbits are relatively compact.
This property is very useful in studying the long-time behaviour of solutions.
\end{rem}

\section{Concluding remarks}
\label{s-conclud}

\begin{rem} \label{c-additional term}
The realization of \eqref{e-parabol}--\eqref{e-initial} in Section \ref{s-parabolic} still 
enjoys maximal regularity if one adds a term $bu$ in the dynamic equation on $J \times \SX$ 
and imposes suitable conditions on $b$.
\end{rem}

\begin{rem} \label{c-couple}
Everything can be done also for systems which couple in the reactions.
\end{rem}
\begin{rem} \label{r-nonaut}
The fundamental result of Pr\"uss (Proposition \ref{p-pruess}) allows to treat the quasilinear
 problem \eqref{quasi-1}--\eqref{quasi-5} also in the case where the nonlinearities 
$\mathfrak b$ and $\mathfrak a$ depend explicitly on time.
We did not carry out this here for the
sake of technical simplicity.
\end{rem}
\begin{rem} \label{c-Hoel}
If one requires $\Omega$ to be a Lipschitz domain and, additionally, imposes
a certain compatibility condition between $\Gamma$ and its complement in the 
boundary (see \cite{Groe}, \cite{HMRS}), then $(-\nabla \cdot \mu \nabla
+1)^{-1}$ maps $\breve W^{-1,q}_\Gamma$, i.e., the anti-dual space of $W^{1,q}_\Gamma$, 
into a H\"older space, if $q >d$.
If $s$ in Theorem \ref{t-solution}/Theorem \ref{t-semilinear}
 is chosen sufficiently large, then the corresponding solutions are even H\"older continuous in space and 
time, compare \cite{DMRT}.
\end{rem}

\begin{rem} \label{c-move}
What cannot be treated within this framework is the case where $\SX$ moves in 
$\Omega$ in time.
If one wants to include this, the concept of \cite{HaR}
should be adequate, see also \cite{HaR3}.
\end{rem}
\begin{rem} \label{c-null}
What also cannot be treated within this framework is the case where the function $\varepsilon$
is not away from $0$, in particular, if it is $0$ on a subset of positive boundary measure.
This would e.g.\ affect the case of inhomogeneous Neumann boundary conditions.
It is known that 
the resulting problem is of very different functional analytic quality and requires 
different methods, see \cite{Nit1}. 
\end{rem}

\section*{Acknowledgments} We wish to thank our colleagues K.~Gr\"oger (Berlin),
 H. Amann (Z\"urich), H.~Vogt (Clausthal), R.~Nittka (Leipzig) and P.~C.~Kunstmann (Karlsruhe) for 
valuable discussions on the subject of the paper.
We wish to thank the referee for his critical comments.

\end{document}